\newcommand{\mylabel}[2]{#2\def\@currentlabel{#2}\label{#1}}
\newcommand{\R}{\mathbb R}
\newcommand{\N}{\mathbb N}
\newcommand{\Z}{\mathbb Z}
\newcommand{\T}{\mathbb T}
\newcommand{\repsilon}{\epsilon}
\newcommand{\MAP}{F}
\DeclareMathOperator{\initial}{in}
\DeclareMathOperator{\adf}{}
\DeclareMathOperator{\adv}{adv}
\DeclareMathOperator{\heat}{heat}
\def\dd{{\rm d}}
\def\eps{\varepsilon}
\def\e{{\rm e}} 
\def\de{{\partial}}
\def\RR {\mathbb{R}}
\def\TT {\mathbb{T}}
\def\ZZ {\mathbb{Z}}
\def\Re{{\rm Re}}
\def\Im{{\rm Im}}
\def \dist{{\rm dist}}
\def \diam{{\rm diam}}
\newcommand{\norm}[1]{\left\lVert #1 \right\rVert}
\newcommand{\jap}[1]{\left\langle #1 \right\rangle}
\newcommand{\cL}{\mathcal{L}}
\newcommand{\cE}{\mathcal{E}}
\newcommand{\cI}{\mathcal{I}}
\newcommand{\cA}{\mathcal{A}}
\newcommand{\cW}{\mathcal{W}}
\newcommand{\cX}{\mathcal{X}}
\newcommand{\InverseMap}{\Psi}
\newtheorem{proposition}{Proposition}[section]
\newtheorem{theorem}[proposition]{Theorem}
\newtheorem{corollary}[proposition]{Corollary}
\newtheorem{lemma}[proposition]{Lemma}
\theoremstyle{definition}
\newtheorem{definition}[proposition]{Definition}
\newtheorem{remark}[proposition]{Remark}
\theoremstyle{definition}
\numberwithin{equation}{section}
\def\@tocline#1#2#3#4#5#6#7{\relax
  \ifnum #1>\c@tocdepth 
  \else
    \par \addpenalty\@secpenalty\addvspace{#2}%
    \begingroup \hyphenpenalty\@M
    \@ifempty{#4}{%
      \@tempdima\csname r@tocindent\number#1\endcsname\relax
    }{%
      \@tempdima#4\relax
    }%
    \parindent\z@ \leftskip#3\relax \advance\leftskip\@tempdima\relax
    \rightskip\@pnumwidth plus4em \parfillskip-\@pnumwidth
    #5\leavevmode\hskip-\@tempdima
      \ifcase #1
       \or\or \hskip 1em \or \hskip 2em \else \hskip 3em \fi%
      #6\nobreak\relax
    \dotfill\hbox to\@pnumwidth{\@tocpagenum{#7}}\par
    \nobreak
    \endgroup
  \fi}
\author{Michele Dolce}
\address{Institute of Mathematics, EPFL, Station 8, 1015 Lausanne, Switzerland}
\email{michele.dolce@epfl.ch}
\author[Carl J. P. Johansson]{Carl Johan Peter Johansson}
\address{Institute of Mathematics, EPFL, Station 8, 1015 Lausanne, Switzerland}
\email{carl.johansson@epfl.ch}
\author{Massimo Sorella}
\address{Institute of Mathematics, EPFL, Station 8, 1015 Lausanne, Switzerland}
\email{massimo.sorella@epfl.ch}
\title[Dissipation enhancement for a class of Hamiltonian flows]{Dissipation enhancing properties for a class of Hamiltonian flows with closed streamlines}
\begin{document}
\maketitle
\begin{abstract}
We study the evolution of a passive scalar subject to molecular diffusion and advected by an incompressible velocity field on a 2D bounded domain. The velocity field is $u=\nabla^\perp H$, where $H$ is an autonomous Hamiltonian whose level sets are Jordan curves foliating the domain. We focus on the high Péclet number regime ($\mathrm{Pe}:=\nu^{-1} \gg 1$), where two distinct processes unfold on well separated time-scales: \textit{streamline averaging} and standard diffusion. For a specific class of Hamiltonians with one non-degenerate elliptic point (including perturbed radial flows), we prove exponential convergence of the solution to its streamline average on a subdiffusive time-scale $T_\nu \ll \nu^{-1}$, up to a small correction related to the shape of the streamlines. The time-scale $T_\nu$ is determined by the behavior of the period function around the elliptic point. To establish this result, we introduce a model problem arising naturally from the difference between the solution and its streamline average. We use pseudospectral estimates to infer decay in the model problem, and, in fact, this analysis extends to a broader class of Hamiltonian flows. Finally, we perform an asymptotic expansion of the full solution, revealing that the leading terms consist of the streamline average and the solution of the model problem.
\end{abstract}
\tableofcontents
\section{Introduction}
We consider the evolution of a passive tracer in a bounded domain which is advected by a regular Hamiltonian flow with closed streamlines in the large Péclet number $\mathrm{Pe}=LU/\kappa$ regime, where $L, U$ are the characteristic lenght and velocity scales and $\kappa$ is the diffusivity coefficient.  This is a paradigmatic example of a wide range of physical, chemical and biological processes and can be thought of as a toy model for some hydrodynamic stability problems around \textit{laminar flows}. For instance, in geophysical applications there are persistent lone eddies which are likely to homogenize tracers along their streamlines before standard diffusive processes take over \cite{rhines1983rapidly}. When $\mathrm{Pe}\gg 1$, it is often observed that the averaging along the streamlines of the flow happen on a time-scale $T_a$ much shorter than the diffusive time-scale $T_d=L^2/\nu$, a phenomenon which is linked to the \textit{effective diffusivity} studied in homogenization theory e.g. \cite{MR1265233}.  This separation in time-scales, also known as \textit{dissipation enhancement}, is related to the intricate interplay between diffusion and advection. In earlier investigations in the applied literature the precise scalings of the \textit{averaging time} $T_a$ in terms of $\mathrm{Pe}$ have been debated \cite{rhines1983rapidly}, but it is now clear that they heavily depend on specific properties of the advecting velocity field. From a mathematical point of view, the picture is well-understood for radial and shear flows. In these cases, $T_a$ is related to the behavior around critical points (or the absence of them) for the velocity profile. The great advantage of  radial and shear flows is that the average along the streamlines (average in the angular or shearing direction) commutes with the diffusion, meaning that the dynamics of the average is decoupled from the rest of the solution. Therefore, it is enough to quantify the decay rates of the solution minus its streamline average to estimate the separation of time-scales. This fact is crucially exploited in all the quantifications of $T_a$ for shear and radial flows, obtained through spectral methods \cite{Vukadinovic15,Gallay:2021aa,Colombo21,Feng23,He22}, \textit{hypocoercivity}  \cite{Bedrossian17,CZD20} and the quantitative version of H\"ormander's classical theory \cite{Albritton22}.

On the other hand, for more general Hamiltonian flows, this particular structure is lost and less is known.  An insightful quantitative homogenization-type result  was obtained in \cite{kumaresan2018advection,Vukadinovic21} for a class of Hamiltonians. They were able to show that there is a time $T_*$ with $T_a\ll T_*\ll T_d$ for which the tracer is indeed averaged along the streamlines in the $\mathrm{Pe}\to \infty$ limit, where the time $T_a$ is determined by the behavior around critical points of the \textit{period function} (the analogue of the velocity profile in shear and radial flows). This can be thought of as a form of dissipation enhancement, but the quantitative control they obtain is not uniform in time. We comment more about this result at the end of this introduction, where we will compare it with what we obtain in the present study.

The enhanced dissipation effect is much faster when the flow is \textit{turbulent} instead of laminar, but we keep the discussion short and somewhat informal given the purposes of this paper. In particular, we say that a flow is turbulent if it has `complicated trajectories', such as mixing flows \cite{constantin2008diffusion}. In these cases, there are results showing dissipation enhancement both in the deterministic \cite{elgindi2023optimal,CZDE20,constantin2008diffusion} and in the stochastic setting \cite{Bedrossina21Blum}. Finally,  also the \textit{anomalous dissipation} results  \cite{TDTEGIIJ22, CCS22, AV23, EL23, BBS23} obtained recently can be interpreted as an extreme form of dissipation enhancement.

\subsection{Main results} 

In this paper, we aim at giving a different perspective on Hamiltonian flows containing only elliptic points, naturally arising from perturbations of radial flows or in regions near elliptic points of general Hamiltonians. Let $M\subset \RR^2$ be a bounded domain with smooth boundary $\partial M$. Let $H \colon M \to \R$ be a $C^3$ function such that $H$ is constant on $\partial M$. Define $u = \nabla^{\perp} H = (-\de_{x_2},\de_{x_1}) H$ and let $\rho:[0,\infty)\times M\to \mathbb{R}$ be the solution to the advection diffusion equation
\begin{align}
\label{eq:advdiff}
\begin{cases}\de_t \rho+u\cdot\nabla \rho=\nu \Delta \rho, \qquad x \in M, \, t> 0,\\
\rho|_{t=0}=\rho^{in}, \qquad \rho|_{\partial M}=0 \text{ (or }\partial_n \rho|_{\partial M}=0).
\end{cases}
\end{align}
We consider either Dirichlet (or homogeneous Neumann) boundary conditions. Notice that $u$ is tangent to the boundary (since $H$ is constant on $\partial M$) and we have already properly adimensionalized the equations so that $\nu:=\mathrm{Pe}^{-1}$. We discuss more about precise hypotheses on $H$ in the sequel, but an essential assumption is that the level sets $\{H=h\}$ are Jordan curves foliating $M$. 

To understand the evolution of $\rho$, as we explained above, it is natural to introduce the average along the streamlines, corresponding to an integration over the level sets of $H$. For the class of Hamiltonians we consider here, as we show in Section \ref{sec:toolbox}, this is  equivalent to performing the orthogonal projection in $L^2(M)$ onto the kernel of the transport operator $u\cdot \nabla$. For instance,  all the functions of the type $F(H)$ are in $\mathrm{Ker}(u\cdot \nabla)$. Thus, we split a function $f\in L^2(M)$ as 
\begin{equation}
f:=P_0f+P_\perp f=f_0+f_\perp, \qquad P_0 f \in \mathrm{Ker}(u\cdot \nabla).
\end{equation}
This is also the natural splitting suggested by the inviscid problem, where $P_0 \rho^{in}$ is known to be, in the cases considered here, the weak limit in $L^2$ of the evolution of \eqref{eq:advdiff} with $\nu=0$ (quantitative decay rates of this convergence are related to  \textit{mixing} properties of $u$).
Projecting the equation \eqref{eq:advdiff}, one  has
\begin{align}
\label{eq:PperpP0}
\begin{cases}
\de_t\rho_\perp +u\cdot\nabla \rho_{\perp}=\nu P_\perp\Delta \rho_\perp+\nu P_\perp\Delta \rho_0,\\
\rho_\perp|_{t=0}=\rho_{\perp}^{in},
\end{cases} \qquad \qquad \begin{cases}
\de_t\rho_0 =\nu P_0\Delta \rho_0+\nu P_0 \Delta \rho_\perp,\\
\rho_0|_{t=0}=\rho_{0}^{in},
\end{cases}
\end{align}
with the same Dirichlet or Neumann boundary conditions and where we have used the fact that $P_0$ commutes with $u\cdot \nabla$, as is precisely shown in Section \ref{sec:toolbox}. In general, the commutator $[P_\perp,\Delta]=[\Delta,P_0]\neq0$ (which instead is zero for shear or radial flows) and therefore the dynamics of $\rho_\perp$ and $\rho_0$ are coupled. 

Our goal is to characterize the long-time behavior of $\rho_\perp$. A first guess, suggested by the behavior of shear and radial flows, would be that $\rho_\perp$ decays on a sub-diffusive time-scale depending on the properties of $u$. For instance
\begin{equation*}
\norm{\rho_{\perp}(t)}_{L^2}=\norm{(\rho - \rho_{0})(t)}_{L^2}\lesssim\e^{-\lambda_\nu t} \|\rho^{in}-\rho^{in}_0\|_{L^2}, \qquad \text{ with} \quad \nu/\lambda_\nu \to 0 \quad\text{as}\quad\nu \to 0,
\end{equation*}
where $\lambda_\nu$ is called \textit{enhanced dissipation rate}. To achieve this bound, a possibility is that the forcing term $\nu P_\perp\Delta\rho_0$ decays at least at the same rate, meaning that the interplay between the commutator $[P_\perp,\Delta]$ and the dynamics of $\rho_0$ result in some dissipation enhancing property of this forcing term, which can be quite challenging to quantify though. 
On the other hand, since for $\nu=0$ the $\rho_0$ is conserved in time, one can hope that  $\nu P_{\perp}\Delta \rho_0$ generates at worst a small term that one can subtract from the dynamics of $\rho_\perp$ and still get decay. This is still a delicate statement to formalize since higher order derivatives are involved, which in general can cost inverse powers of $\nu$ due to the formation of small scales. However, in account of the reasoning above, our first objective is to obtain the enhanced dissipation of $\rho_\perp $ up to a `small' correction $g_{\mathrm{corr}}$, namely
\begin{equation}
\label{eq:goal1}
\norm{(\rho - \rho_{0} - g_{\mathrm{corr}})(t)}_{L^2}\lesssim\e^{-\lambda_\nu t} \|\rho^{in}-\rho^{in}_0\|_{L^2}, \qquad \text{ with} \quad \nu/\lambda_\nu \to 0 \quad\text{as}\quad\nu \to 0.
\end{equation}
There is also a weaker result which would still be indicative of some dissipation enhancement generated by the flow. When $\rho_0^{in}=0$, all the errors  remain so small that at least on a sub-diffusive time $T_\nu=O(\lambda_{\nu}^{-1})$ one is able to prove 
\begin{equation}
\label{bd:disssub}
\norm{(\rho - \rho_{0})(T_\nu)}_{L^2} \leq (1-c_*)\|\rho^{in} \|_{L^2}, \qquad \text{ with } c_*\in (0,1).
\end{equation}
Namely, the $L^2$ norm is reduced at times $T_\nu\ll T_d=O(\nu^{-1})$, that is an instance of dissipation enhancement. In most of the cases the averaging time $T_a$ is in fact identified directly with $T_\nu$. For shear, radial and relaxation-enhancing flows\footnote{As defined in \cite{constantin2008diffusion}.} the bound \eqref{bd:disssub} would automatically imply the exponential decay  with rate $\lambda_\nu$ since $\rho_0(t)=0$ for all times if it is zero initially, see for instance \cite{Feng19,CZDE20}. Unfortunately, since $\rho_0$ is not conserved in our case, we cannot hope to iterate the estimate \eqref{bd:disssub} in a straightforward way, but still a bound like \eqref{bd:disssub} is an indication of dissipation enhancement.

Making the heuristic arguments above rigorous seems complicated in general, especially due to the lack of a precise understanding of the dynamics of $\rho_0$. In fact, we believe that studying quantitatively the evolution of $\rho_0$ is an interesting subject by itself. As an example, we also show that $\rho_0$ can be created and remain quantitatively lower bounded on a short-time interval even when starting from $\rho_0^{in}=0$. In Proposition \ref{thm:example1} we exhibit an example concentrated close to a hyperbolic point whereas in Proposition \ref{thm:example2} we exploit an elliptic point. 

Regarding our goals for $\rho_\perp$, in Corollary~\ref{cormain2} we show that bounds as in \eqref{bd:disssub} holds true for Hamiltonians in the class \ref{HamiltonianClassAm}, precisely defined in Definition~\ref{def:ClassHamiltoniansA}. To have $H$ in \ref{HamiltonianClassAm} we essentially require the following:
\begin{itemize}
\item Regularity, that is $H\in C^3$.
\item There is one non-degenerate elliptic critical point $x_0$, i.e. $\nabla H(x_0)=0$ and $\mathrm{det}(D^2 H)(x_0)>0$.
\end{itemize}
To prove the stronger result encoded in \eqref{eq:goal1}, we need to further assume a more precise control on $H$, roughly speaking:
\begin{itemize}
\item The modulus of the velocity at any point can be quantitatively controlled with its average along the streamline.
\end{itemize}
We will denote this class as \ref{HamiltonianClassPEps-m}  where $\eps$ essentially quantifies how close the Hamiltonian is to being a radial flow.
As we show in Section \ref{sec:examples},  any perturbation of a radial Hamiltonian of the form 
\begin{align} \label{intro:Hamiltonian}
H_\epsilon(r,\varphi)=H(r)+\epsilon f(r,\varphi), \qquad r,\varphi \text{ are the standard polar coordinates,}
\end{align}
is in the class \ref{HamiltonianClassPEps-m} with $\eps \simeq \epsilon$. Moreover, this behavior is typical of some non-degenerate Hamiltonians close to elliptic points.\footnote{For the cellular flow $H(x,y)=\sin(x)\sin(y)$, the parameter $\eps$ indicates how close one is to one of the elliptic points, say $(\pi/2,\pi/2)$. The streamlines are indeed smooth deformation of circles close to the elliptic point, but the behavior of the flow cannot be considered close to radial when one gets too close to the separatrices associated to the hyperbolic point.}

The index $m$ in both classes, is instead related to the enhanced dissipation rate $\lambda_\nu$, which takes into account the difference in velocity along different streamlines. This can be quantified by looking at the period and frequency functions defined respectively as
\begin{equation}
\label{def:period}
T(h):=\oint_{\{H=q(h)\}} \frac{\dd \ell}{|\nabla H|}, \qquad \Omega(h):=\frac{1}{T(h)}.
\end{equation}
where $q$ is some non-constant function related to the behavior of $H$ around its critical point (typically $q(h)=h^2$). Then,  an Hamiltonian in class \ref{HamiltonianClassAm}, which includes the class \ref{HamiltonianClassPEps-m}, is such that 
\begin{align*}
|\Omega'(h)|\simeq h^{m-1}, \qquad \text{for } \quad |h|\ll 1
\end{align*}
where we are assuming, without loss of generality, that the only critical point of $H$ is located at $h=0$.

In the class of Hamiltonians introduced above, by analogy with what is known for shear or radial flows, we expect that the (largest) averaging time $T_a$ is inversely proportional to 
\begin{equation}
\label{def:lambdanu}
\lambda_\nu:=\nu^{\frac{m}{m+2}}.
\end{equation}
Our first main result is the following. 
\begin{theorem}
\label{th:main}
Let $H$ be an Hamiltonian in the class \ref{HamiltonianClassPEps-m} as defined in Definition \ref{def:ClassHamiltoniansP} with $\eps\in [0,1/4)$, $m\geq1$ and let $\lambda_\nu$ be defined as in \eqref{def:lambdanu}. 
Then, there exists $c_0,\delta_*\in (0,1)$, $C>1$ and $g_{\mathrm{corr}}\in L^2(M)$ with $P_0 g_{\mathrm{corr}}=0$, such that the following bounds holds true:
\begin{align}
\label{bd:rhoperpmain}&\norm{(\rho - \rho_{0} -g_{\mathrm{corr}})(t)}_{L^2}\leq \e^{-\delta_*\lambda_\nu t+\pi/2}\| \rho^{in} - \rho_{0}^{in} \|_{L^2},\\
\label{bd:gcorr}&\norm{g_{\mathrm{corr}}(t)}_{L^2}\leq C\eps \e^{-c_0\nu t}\|\rho^{in}\|_{L^2},\\
 \label{bd:rho0main} &\norm{\rho_{0}(t)}_{L^2}\leq \e^{-c_0\nu t}\|\rho^{in}\|_{L^2}.
\end{align}
Moreover, if $\rho_0^{in}=0$, then the right-hand side of \eqref{bd:gcorr}-\eqref{bd:rho0main} is multiplied by an extra factor of $C\eps$.
\end{theorem}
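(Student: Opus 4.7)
The plan is to construct the correction $g_{\mathrm{corr}}$ so that the subtracted quantity $\rho_\perp - g_{\mathrm{corr}}$ obeys the \emph{model problem}
\begin{equation*}
\partial_t w + u\cdot\nabla w - \nu P_\perp \Delta w = 0, \qquad P_0 w = 0,
\end{equation*}
for which one expects decay at the enhanced rate $\lambda_\nu$, and so that $g_{\mathrm{corr}}$ absorbs the inhomogeneous forcing $\nu P_\perp\Delta\rho_0$ appearing in the equation for $\rho_\perp$ in \eqref{eq:PperpP0}. Concretely, I would define $g_{\mathrm{corr}}$ as the mild solution of the forced model problem
\begin{equation*}
\partial_t g_{\mathrm{corr}} + u\cdot\nabla g_{\mathrm{corr}} - \nu P_\perp\Delta g_{\mathrm{corr}} = \nu P_\perp\Delta \rho_0, \qquad g_{\mathrm{corr}}|_{t=0}=0,
\end{equation*}
with the appropriate boundary conditions. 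By construction, $P_0 g_{\mathrm{corr}}=0$ and $\rho_\perp - g_{\mathrm{corr}}$ solves the homogeneous model problem with initial datum $\rho_\perp^{in}$.

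\textbf{Step 1: bound on $\rho_0$.} First I would prove \eqref{bd:rho0main} by an energy estimate on the second equation in \eqref{eq:PperpP0}. The symmetric dissipation $\nu P_0\Delta\rho_0$ is coercive modulo the Poincaré constant on $P_0 L^2$, while the coupling term $\nu P_0\Delta\rho_\perp = \nu[P_0,\Delta]\rho_\perp$ is, thanks to the class \ref{HamiltonianClassPEps-m} assumption (a perturbation of a radial Hamiltonian up to an error of size $\eps$), bounded by $C\eps$ times a controllable norm of $\rho_\perp$. Integrating in time and using the $L^2$-contraction $\|\rho(t)\|_{L^2}\le\|\rho^{in}\|_{L^2}$ inherited from \eqref{eq:advdiff}, a standard Grönwall argument should close the estimate with diffusive rate $c_0\nu$.

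\textbf{Step 2: pseudospectral estimate for the model problem.} Let $\cL_\perp := u\cdot\nabla - \nu P_\perp\Delta$ on $P_\perp L^2$. Using the Gearhart--Prüss--type machinery for non-self-adjoint semigroups (as in Wei or Helffer--Sjöstrand), I would prove
\begin{equation*}
\| e^{-t\cL_\perp} w\|_{L^2}\le \e^{-\delta_*\lambda_\nu t+\pi/2}\|w\|_{L^2},
\end{equation*}
with $\lambda_\nu=\nu^{m/(m+2)}$ corresponding to the degeneracy exponent of the period function $\Omega'(h)\simeq h^{m-1}$. Concretely, one passes to action-angle--type coordinates adapted to $H$, in which $u\cdot\nabla$ becomes multiplication by $\Omega(h)\partial_\theta$ and the principal diffusive term becomes (up to the class \ref{HamiltonianClassPEps-m} quasi-radial control on $|u|$) a second-order elliptic operator in $h$. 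A hypocoercivity/pseudospectral lower bound on $\|(\cL_\perp-i\mu)^{-1}\|$ uniform in $\mu\in\R$, combined with the constant $\pi/2$ in Wei's abstract enhancement theorem, yields the claimed decay.

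\textbf{Step 3: combining the estimates.} The estimate \eqref{bd:rhoperpmain} follows immediately by applying the model-problem semigroup bound to $\rho_\perp-g_{\mathrm{corr}}$. For \eqref{bd:gcorr}, one uses the Duhamel formula
\begin{equation*}
g_{\mathrm{corr}}(t) = \int_0^t e^{-(t-s)\cL_\perp}\nu P_\perp\Delta\rho_0(s)\,\dd s
\end{equation*}
and estimates the forcing via $\nu P_\perp\Delta\rho_0 = \nu[\Delta,P_0]\rho_0$. The commutator $[\Delta,P_0]$ vanishes identically in the radial case and, by the class \ref{HamiltonianClassPEps-m} structure, is of size $\eps$ in the appropriate norm; together with the diffusive decay \eqref{bd:rho0main} and parabolic regularity for $\rho_0$ (to compensate the two derivatives against factors of $\nu$), this gives $\|g_{\mathrm{corr}}(t)\|_{L^2}\lesssim \eps \e^{-c_0\nu t}\|\rho^{in}\|_{L^2}$. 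Finally, when $\rho_0^{in}=0$, one observes that $\rho_0$ is generated only by the commutator source $\nu[P_0,\Delta]\rho_\perp=O(\eps)$, so iterating Step 1 produces the extra $\eps$ factor in \eqref{bd:rho0main} and hence, via the Duhamel bound above, the extra $\eps$ factor in \eqref{bd:gcorr}.

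\textbf{Main obstacle.} The most delicate point is the pseudospectral estimate of Step 2: the operator $\cL_\perp$ is neither normal nor a classical Schrödinger operator, the degeneracy of $\Omega'(h)$ at the elliptic point dictates the exponent $m/(m+2)$, and one must prove the resolvent bound uniformly across streamlines, controlling the passage between the physical domain $M$ and action-angle coordinates. A secondary difficulty is the book-keeping of $\nu$-losses from the two derivatives in the forcing $\nu P_\perp\Delta\rho_0$, which requires using parabolic smoothing on $\rho_0$ or reinterpreting the commutator $[\Delta,P_0]$ as a lower-order operator under the class \ref{HamiltonianClassPEps-m} hypothesis.
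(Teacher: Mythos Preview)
Your definition of $g_{\mathrm{corr}}$ and the identification of Step~2 (the pseudospectral bound for the homogeneous model problem, which is exactly Theorem~\ref{th:main2}) are the same as the paper's. The gap is in Step~3, and the paper itself flags it in Remark~\ref{rem:correction}: the Duhamel representation
\[
g_{\mathrm{corr}}(t)=\int_0^t e^{-(t-s)\cL_\perp}\,\nu P_\perp\Delta\rho_0(s)\,\dd s
\]
requires a uniform-in-$\nu$ bound on $\|P_\perp\Delta\rho_0(s)\|_{L^2}$, which the authors explicitly say they cannot obtain. Your two proposed fixes do not close: the commutator $[P_0,\Delta]$ is genuinely second order (the $\eps$ from \ref{item:ClassPCond4}--\ref{item:ClassPCond5} shrinks the \emph{coefficient}, not the order), and parabolic smoothing for $\rho_0$ is not available because $\rho_0$ solves a system coupled to $\rho_\perp$, not a standalone heat equation; even heuristically, $\nu\|\Delta\rho_0(s)\|\sim s^{-1}$ makes the Duhamel integral diverge logarithmically at $s=0$.

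The paper bypasses this by never estimating $g_{\mathrm{corr}}$ through Duhamel. Instead it performs an asymptotic expansion in $\eps$,
\[
\rho_\perp = g + \sum_{n\geq 1}\eps^n\rho_\perp^{(n)},\qquad \rho_0 = \eta + \sum_{n\geq 1}\eps^n\rho_0^{(n)},
\]
and controls each $\rho_\iota^{(n)}$ by direct $L^2$ energy estimates. The crucial observation is that at the energy level the coupling term is $\langle\nabla\rho_0^{(n-1)},\nabla\rho_\perp^{(n)}\rangle$, a \emph{first-order} inner product between a streamline-averaged function and a zero-average function. Under \ref{item:ClassPCond4}--\ref{item:ClassPCond5} this inner product is bounded by $\eps\sqrt{2/(1-\eps)}\,\|\nabla\rho_0^{(n-1)}\|\|\nabla\rho_\perp^{(n)}\|$ (Proposition~\ref{prop:expansion}), so the $\eps^{-1}$ in the forcing is exactly cancelled and the hierarchy closes with geometric growth $(8/(1-\eps))^{n/2}$. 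Summing the series gives \eqref{bd:gcorr}. This is the missing idea in your Step~3: work at the $H^1$-energy level where the commutator is effectively first order, rather than at the semigroup level where it is second order. As a minor side remark, \eqref{bd:rho0main} for general data follows simply from the $L^2$ energy identity for the full $\rho$ plus Poincar\'e, not from a separate analysis of the $\rho_0$ equation.
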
 
Let us give a few remarks about the results contained in the theorem above.
\begin{remark}[On the correction]
\label{rem:correction}
Looking at the equation satisfied by $\rho_\perp$ \eqref{eq:PperpP0},
 denoting $\cL_\perp=u\cdot \nabla -\nu P_\perp\Delta P_\perp$,  the natural way of defining the correction would be 
\begin{equation}
(\rho-\rho_0)(t)=\rho_{\perp}(t)=\e^{-t\cL_\perp}\rho_{\perp}^{in}+g_{\mathrm{corr}}(t), \qquad g_{\mathrm{corr}}(t)=\int_0^t\e^{-(t-s)\cL_\perp}(\nu P_{\perp}\Delta \rho_0)(s)\dd s.
\end{equation}
To prove \eqref{bd:rhoperpmain} from the formula above, on account of Wei's theorem \cite{WeiDiffusion19} (recalled in Theorem \ref{th:pseudoWei}) it is enough to understand \textit{pseudospectral properties} of $\cL_\perp$ and bounds on $P_\perp\Delta \rho_0$. As we explain below, the study of pseudospectral properties of $\cL_\perp$  is in fact a key point of our paper, and in particular estimates of the pseudospectral abscissa
\begin{equation}
\label{def:pseudointro}
\Psi(\cL_\perp)=\operatorname{inf}\{\|(\cL_\perp-i\lambda) f\|_{L^2_\perp} \, :\, f\in D(\cL_\perp), \, \lambda \in \mathbb{R}, \, \|f\|_{L^2_\perp}=1 \},
\end{equation}
where $L^2_{\perp}(M)$ is defined in \eqref{def:L2perp}. However, we are not able to provide good uniform bounds directly on $P_\perp\Delta \rho_0$. In fact, if $\norm{P_\perp\Delta \rho_0}_{L^2} \lesssim \| \rho^{in} \|_{H^2}$, thanks to Theorem \ref{th:main2}, we would be able to prove that 
\begin{equation}
\norm{g_{\mathrm{corr}}(t)}
_{L^2}\lesssim \nu \int_0^t \e^{-\delta_*\lambda_\nu(t-s)} \norm{P_\perp\Delta \rho_0(s)}_{L^2} \dd s\lesssim \nu^{\frac{2}{m+2}} \| \rho^{in} \|_{H^2}.
\end{equation}
Thus, the correction would go to zero as $\nu\to 0$ meaning that one would have a quantitative homogenization-type result as well. But obtaining uniform bounds (with respect to $\nu$) for $P_\perp\Delta \rho_0$ requires  a deep understanding of the dynamics of $\rho_0$ and, again, its interplay with $[P_0,\Delta]$. We overcome this obstacle by defining the correction through an \textit{asymptotic expansion} in powers of $\eps$, where the leading order term is determined by the `homogeneous' problem without the forcing term $P_\perp\Delta\rho_0$. We expand  in $\eps$ because it is the smallness parameter we gain in energy estimates from a commutator related to $[P_0,\nabla]$. See Section \ref{sec:proofmain} for details, where Theorem \ref{th:main} is proved.
\end{remark}
\begin{remark}[On initial data with zero streamlines average]
When $\rho_0^{in}=0$, we have an extra factor $\eps$ in the estimates \eqref{bd:gcorr}-\eqref{bd:rho0main}. This implies that, for $C_*>1$ sufficiently large and $\eps$ sufficiently small, at times $T_\nu=C_*\nu^{-\frac{m}{m+2}}$, we get 
\begin{equation}
\norm{\rho_\perp(T_\nu)}_{L^2}\leq \frac12\|\rho^{in}\|_{L^2}.
\end{equation}
Hence,  we also have the bound \eqref{bd:disssub}.
\end{remark}

\subsection{Model problems}
As mentioned in Remark \ref{rem:correction}, a key point in the proof of Theorem \ref{th:main} is the study of pseudospectral properties of the operator 
\begin{equation}
\label{def:Lperp}
\cL_\perp=u\cdot \nabla-\nu P_\perp\Delta P_\perp,
\end{equation}
with Dirichlet or Neumann boundary conditions, and $\cL_{\perp} : D(\cL_\perp)\subset L^2_\perp(M)\to L^2_\perp(M)$  with
\begin{equation}
\label{def:L2perp}
L^2_{\perp}(M):= \{ f\in L^2(M) \, : \, P_0 f=0\}.
\end{equation}
In particular, we aim at proving enhanced dissipation for the model problem  
\begin{equation}
\label{eq:model}
\begin{cases}
\de_tg +\cL_{\perp}g=0, \qquad x \in M, \, t> 0,\\
g|_{t=0}=\rho_{\perp}^{in}, \qquad g|_{\partial M}=0 \text{ (or }\partial_n g|_{\partial M}=0).
\end{cases} 
\end{equation}
Our second main result, which holds for the more general class \ref{HamiltonianClassAm}, is the following:
\begin{theorem}
\label{th:main2}
Let $H$ be an Hamiltonian in the class \ref{HamiltonianClassAm} as in Definition \ref{def:ClassHamiltoniansA} with  $m\geq1$ and let $\lambda_\nu$ be defined as in \eqref{def:lambdanu}.  Let $\rho_{\perp}^{in}\in L^2_\perp(M)$ and $g$ be the solution to \eqref{eq:model}.  Then, there exists $\delta_*\in (0,1)$ such that
\begin{align}
\label{bd:gmain} \norm{g(t)}_{L^2}\leq \e^{-\delta_* \lambda_{\nu} t + \pi/2}\|\rho_\perp^{in}\|_{L^2}.
\end{align}
\end{theorem}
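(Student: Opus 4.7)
The plan is to deduce Theorem~\ref{th:main2} from a pseudospectral estimate on $\cL_\perp$ via Wei's theorem (Theorem~\ref{th:pseudoWei}). More precisely, the bound \eqref{bd:gmain} follows if we establish a uniform lower bound on the pseudospectral abscissa,
\[
\Psi(\cL_\perp)\;\geq\;\delta_*\,\lambda_\nu,\qquad \lambda_\nu=\nu^{m/(m+2)},
\]
with $\delta_*\in(0,1)$ independent of $\nu$. Equivalently, the task reduces to proving
\[
\|(\cL_\perp - i\lambda)f\|_{L^2}\;\gtrsim\;\nu^{m/(m+2)}
\]
uniformly in $\lambda\in\R$ and in $f\in D(\cL_\perp)\cap L^2_\perp(M)$ with $\|f\|_{L^2}=1$.

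My first move is to pass to action--angle-type coordinates $(h,\theta)$ adapted to the foliation $\{H=h\}$, which are available on $M$ up to the elliptic point under the hypotheses defining class~\ref{HamiltonianClassAm}. In these coordinates $u\cdot\nabla$ becomes $\Omega(h)\partial_\theta$, the projection $P_0$ is the $\theta$-average, and $L^2_\perp$ consists of functions with zero $\theta$-mean. Testing $(\cL_\perp - i\lambda)f=F$ against $f$ and taking real parts, using $P_0 f=0$, yields the basic diffusive coercivity
\[
\nu\,\|\nabla f\|_{L^2(M)}^2 \;=\; \Re\langle F,f\rangle \;\leq\;\|F\|_{L^2}.
\]
Decomposing $f=\sum_{k\neq 0} f_k(h)e^{ik\theta}$ in the angular variable reduces the problem, up to lower-order geometric contributions, to a family of non-selfadjoint one-dimensional resolvent problems of schematic form $\bigl(i(k\Omega(h)-\lambda)-\nu\,a(h)\partial_h^2\bigr)f_k=F_k$. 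On each such mode a standard critical-layer argument then pairs a transport lower bound on $\{|k\Omega(h)-\lambda|\geq \mu\}$ with a diffusive lower bound on the complementary layer, whose width is controlled by $|\Omega'(h)|\simeq h^{m-1}$; optimizing over $\mu$ produces exactly the threshold $\mu\sim\nu^{m/(m+2)}$, and orthogonality in $k$ assembles the modal estimates into the desired global bound on $\Psi(\cL_\perp)$.

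The main obstacle is that class~\ref{HamiltonianClassAm} is genuinely non-product: unlike the radial or shear setting, the diffusion $\nu P_\perp\Delta P_\perp$ in action--angle coordinates is not a plain weighted Laplacian in $(h,\theta)$, and the commutator $[P_0,\Delta]$ does not vanish. The geometric coefficients depend on $|\nabla H|$ and on the lengths of the streamlines, so the one-dimensional reduction on each angular mode is coupled to the others by curvature-type error terms and by boundary contributions near both $\partial M$ and the elliptic point. The conditions defining~\ref{HamiltonianClassAm} should be exactly what is needed to dominate these couplings by streamline averages and absorb them into the main estimate, and the delicate point is to carry out this absorption uniformly in $\nu$ and $\lambda$ without losing the exponent $m/(m+2)$; this is where the bulk of the technical work will go.
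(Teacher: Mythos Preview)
Your overall strategy---reduce to Wei's theorem and establish $\Psi(\cL_\perp)\gtrsim\nu^{m/(m+2)}$ via a critical-layer/thin-set argument---matches the paper. The real parts identity $\nu\|\nabla f\|^2\le\|\cL_\lambda f\|\|f\|$ is also used. However, the core of your implementation plan has a genuine gap: you propose to decompose $f=\sum_{k\ne 0}f_k(h)e^{ik\theta}$, run a one-dimensional resolvent estimate on each mode, and then ``assemble by orthogonality in $k$.'' This is precisely what \emph{cannot} be done here, and the paper says so explicitly: $P_\perp\Delta P_\perp$ does not decouple angular frequencies, so there is no clean family of 1D problems to analyze, and the errors from the mode-coupling are not lower order in any sense that class~\ref{HamiltonianClassAm} controls (the smallness parameter $\eps$ that would let you absorb such couplings belongs to class~\ref{HamiltonianClassPEps-m}, not to~\ref{HamiltonianClassAm}).

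The paper's route avoids the $k$-by-$k$ reduction entirely and works with the full $f$ in physical space. The key mechanism is twofold. First, instead of one critical layer per mode, the paper defines the thin set $\cE_{\lambda,\delta}$ as a \emph{union over all $k$} of thickened level sets $\{|\Omega(h)-\lambda/k|<\delta^m\}$ (plus a neighbourhood of the elliptic point), and proves that for $|\lambda|\lesssim\delta^{-1}$ these are pairwise disjoint with uniformly controlled thickness (Lemma~\ref{lemma:subset}); this disjointness, which uses the non-degeneracy of the elliptic point and the boundedness of $\Omega$, is what replaces orthogonality when summing the Poincar\'e estimates of Lemma~\ref{lem:Poincare}. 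Second, away from $\cE_{\lambda,\delta}$, the transport is exploited not mode-by-mode but via a single Fourier multiplier $\cX$ with symbol $\chi(k,h)\approx\operatorname{sign}(k\Omega(h)-\lambda)$, and the diffusive cross-term $\nu\langle\nabla f,\nabla(\cX f)\rangle$ is controlled in physical space by Lemma~\ref{lemma:operator-estimates}, whose proof is where the rescaled action-angle coordinates and the class-\ref{HamiltonianClassAm} hypotheses (in particular $\partial_h\Phi\in L^\infty$) actually enter. For $|\lambda|\gtrsim\delta^{-1}$ a second case with modified thin-set widths $\sim\delta^{m-1}/|k|$ is needed. The optimization $\delta\sim\nu^{1/(m+2)}$ then closes the argument. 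Your last paragraph identifies the obstruction correctly but the proposed resolution---absorbing couplings via streamline averages---is not the mechanism that works for~\ref{HamiltonianClassAm}; that idea is used elsewhere (Proposition~\ref{prop:expansion}) for the stronger class~\ref{HamiltonianClassPEps-m}.
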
 
The precise decay estimate on $g$ follows by the lower bound on the pseudospectral abscissa that we prove in Proposition \ref{prop:main}, which is one of the main challenges of this paper. 
Moreover, the solution $g$ to the model problem \eqref{eq:model} is behind the definition of our correction $g_{\mathrm{corr}}$. Indeed, as we show in Section \ref{sec:proofmain}, we will define 
\begin{equation}
g_{\mathrm{corr}}(t):=(\rho-\rho_0-g)(t).
\end{equation}
A consequence of Theorem \ref{th:main2} is a  property analogous to \eqref{bd:disssub} for Hamiltonians in the class \ref{HamiltonianClassAm}.
\begin{corollary}
\label{cormain2}
Under the assumptions of Theorem \ref{th:main2}, let $\rho$ be the solution to \eqref{eq:advdiff} with $\rho^{in}=\rho_{\perp}^{in}$. Then, there exists two constants $c_*\in (0,1)$ and $C_*>1$ such that for $T_\nu=C_{*}\nu^{-\frac{m}{m+2}}$ the following holds true
\begin{equation}
\norm{(\rho - \rho_{0})(T_\nu)}_{L^2}\leq (1-c_*)\norm{\rho^{in} }_{L^2}.
\end{equation}
\end{corollary}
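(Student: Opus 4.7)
The plan is to decompose $\rho_\perp = g + w$, where $g$ denotes the solution of the model problem \eqref{eq:model} with initial datum $\rho^{in}$, and $w := \rho_\perp - g$ is the remainder. Since $g$ is governed by the operator $\cL_\perp$ alone, Theorem \ref{th:main2} applies and gives
\begin{align*}
\|g(T_\nu)\|_{L^2} \leq e^{-\delta_* C_* + \pi/2}\|\rho^{in}\|_{L^2},
\end{align*}
a factor that becomes as small as we wish by choosing $C_*$ large.

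From \eqref{eq:PperpP0}, $w$ satisfies $\partial_t w + \cL_\perp w = \nu P_\perp \Delta \rho_0$ with $w(0)=0$. A direct $L^2$ energy estimate --- using that $u\cdot\nabla$ is skew-adjoint, $P_\perp$ is orthogonal, and Young's inequality on the cross term $\nu\langle \nabla\rho_0, \nabla w\rangle$ --- yields
\begin{align*}
\|w(T_\nu)\|_{L^2}^2 \leq \nu \int_0^{T_\nu}\|\nabla \rho_0(s)\|_{L^2}^2\,ds.
\end{align*}
Combining the $H^1$-boundedness $\|\nabla P_0 f\|_{L^2}\leq C_H\|\nabla f\|_{L^2}$ of the streamline averaging projection (which follows from the action-angle realization of $P_0$ set up in Section \ref{sec:toolbox}) with the total energy identity $\nu\int_0^{T_\nu}\|\nabla\rho\|_{L^2}^2\,ds \leq \tfrac12\|\rho^{in}\|_{L^2}^2$, this becomes $\|w(T_\nu)\|_{L^2}^2 \leq C_H^2\, J$, where $J := \nu\int_0^{T_\nu}\|\nabla\rho\|_{L^2}^2\,ds \in [0, \tfrac12\|\rho^{in}\|_{L^2}^2]$.

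To conclude I use a dichotomy on the dissipated energy $J$. On one side, the total energy identity together with $\|\rho_\perp\|\leq\|\rho\|$ gives $\|\rho_\perp(T_\nu)\|_{L^2}^2 \leq \|\rho^{in}\|_{L^2}^2 - 2J$. On the other, triangle inequality with the preceding bounds gives
\begin{align*}
\|\rho_\perp(T_\nu)\|_{L^2} \leq e^{-\delta_* C_* + \pi/2}\|\rho^{in}\|_{L^2} + C_H\sqrt{J}.
\end{align*}
The first bound is decreasing in $J$, the second increasing, so the worst case occurs at their intersection, where both are strictly smaller than $\|\rho^{in}\|_{L^2}$. Fixing $C_*$ large enough that the decay factor in the second bound is sufficiently small, one extracts a constant $c_* \in (0,1)$ (depending on $C_H$ and $C_*$ but not on $\nu$ or $\rho^{in}$) such that $\|\rho_\perp(T_\nu)\|_{L^2}\leq(1-c_*)\|\rho^{in}\|_{L^2}$.

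The main conceptual step is the $H^1$-bound on the averaging projection $P_0$: without it the energy estimate for $w$ cannot be used to control the forcing $\nu P_\perp\Delta \rho_0$ by a quantity linked to $\rho^{in}$, since $\|\rho_0\|_{H^1}$ could in principle blow up as $\nu \to 0$. Once this structural fact --- essentially that averaging over the angular variable in action-angle coordinates is a non-expansive map on gradients --- is available from the toolbox, the rest of the proof is an elementary combination of the model-problem decay from Theorem \ref{th:main2} and the total energy identity.
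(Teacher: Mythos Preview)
Your proof is correct and follows essentially the same approach as the paper's: both decompose $\rho_\perp = g + w$, bound $g$ via Theorem~\ref{th:main2}, control $\|w(T_\nu)\|_{L^2}^2$ by $\nu\int_0^{T_\nu}\|\nabla\rho_0\|^2$ through an energy estimate, invoke the $H^1$-boundedness of $P_0$ (Lemma~\ref{lemma:operator-estimates}) to relate this to the total dissipated energy $J$, and then close via a dichotomy on $J$. The only cosmetic difference is that the paper fixes an explicit threshold $\tilde c_*$ for $2J/\|\rho^{in}\|_{L^2}^2$ and argues in two cases, whereas you phrase the same dichotomy as an optimization over~$J$; the content is identical.
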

Naively, one would like to iterate the result above and conclude the dissipation enhancement property on the original solution for the larger class \ref{HamiltonianClassAm}. However, since $\rho_0$ is not conserved, an iteration would require a quantitative control of its evolution. Thus, we also need to introduce the natural model problem for $\rho_0$, which is given by
\begin{equation}
\label{eq:model00}
\begin{cases}
\de_t\eta=\nu P_0\Delta P_0 \eta:=-\cL_0\eta , \qquad x \in M, \, t> 0,\\
\eta|_{t=0}=\rho_{0}^{in}, \qquad \eta|_{\partial M}=0 \text{ (or }\partial_n \eta|_{\partial M}=0).
\end{cases} 
\end{equation}
This is the leading order approximation of $\rho_0$ in the asymptotic expansion we perform in the proof of Theorem \ref{th:main}, in the sense that $\|\rho_0-\eta\|_{L^2}\lesssim \eps \|\rho^{in}\|_{L^2}$. 
\begin{remark}
We believe that the model problem \eqref{eq:model00} effectively describes the solution to \eqref{eq:advdiff} on sub-diffusive time-scales after the averaging time $T_\nu$. For instance, in \cite{rhines1983rapidly} they assume that $\rho_0$ remains constant on sub-diffusive time-scales, whereas the equation \eqref{eq:model00} keeps into account a possible non-trivial evolution related to the geometry of the streamlines of the flow.
\end{remark}
Finally, by assuming a priori an upper bound on $\|\rho_0-\eta\|_{L^2}$, we are also able to prove the following conditional result.
\begin{proposition}
\label{prop:conditional}
Let $H$ be an Hamiltonian in the class \ref{HamiltonianClassAm} as in Definition \ref{def:ClassHamiltoniansA} with $m\geq1$ and let $\lambda_\nu$ be defined as in \eqref{def:lambdanu}. Let $\rho$ be the solution to \eqref{eq:advdiff} with initial data $\rho^{in}\in H^1$ such such that $\|\rho^{in}\|_{H^1}\leq C \|\rho^{in}\|_{L^2}$ for some constant $C>0$. Assume that there exists $\alpha \in (0,1)$ such that 
\begin{equation}
\label{eq:conditional}
\| (\rho_0-\eta)(t)\|_{L^2}\leq \nu^\alpha \|\rho^{in}\|_{L^2}, \qquad \text{for } t\in [0,\nu^{-1}).
\end{equation}
where $\eta$ is the solution to \eqref{eq:model00}. Then, there exist $\delta_*\in (0,1)$, $C_1>0$ such that 
\begin{equation}
\|(\rho-\eta)(t)\|_{L^2}\leq C_1\max\{\e^{-\delta_*\lambda_\nu t/2}, \nu^{\alpha/2}, \nu^{\frac{1}{m+2}}\} \|\rho^{in}\|_{L^2}, \qquad \text{for any } t\in [0,\nu^{-1}).
\end{equation}
As a consequence
\begin{equation}
\label{eq:homogenization}
\lim_{\nu \to 0} \sup_{t\in [\lambda_\nu^{-1^{-}},\nu^{-1})}\|(\rho-\eta)(t)\|_{L^2}=0.
\end{equation}
\end{proposition}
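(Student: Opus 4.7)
I would start from the decomposition $\rho - \eta = \rho_\perp + (\rho_0 - \eta)$. The second summand is controlled directly by the hypothesis \eqref{eq:conditional}, namely $\|(\rho_0 - \eta)(t)\|_{L^2}\leq\nu^\alpha\|\rho^{in}\|_{L^2}$, so the task reduces to estimating $\rho_\perp$. From the equation $\partial_t\rho_\perp + \cL_\perp\rho_\perp = \nu P_\perp\Delta\rho_0$, Duhamel's formula yields
\[
\rho_\perp(t) = e^{-t\cL_\perp}\rho_\perp^{in} + \int_0^t e^{-(t-s)\cL_\perp}\,\nu P_\perp\Delta\rho_0(s)\,ds,
\]
and the homogeneous term is controlled by Theorem~\ref{th:main2}, producing the $e^{-\delta_*\lambda_\nu t/2}\|\rho^{in}\|_{L^2}$ contribution (after absorbing $e^{\pi/2}$ into $C_1$).

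\textbf{Estimating the Duhamel forcing integral.} For the inhomogeneous integral $v(t)$, I would test against $\phi\in L^2_\perp$ with $\|\phi\|_{L^2}=1$ and set $\psi(s):=e^{-(t-s)\cL_\perp^*}\phi$, which satisfies the same pseudospectral decay via Theorem~\ref{th:main2} applied to $\cL_\perp^*$. A single integration by parts gives
\[
\langle v(t),\phi\rangle = -\int_0^t \nu\,\langle\nabla\rho_0,\nabla\psi(s)\rangle\,ds.
\]
I would then split $\rho_0 = \eta + (\rho_0-\eta)$. For the $\eta$-piece, combining the standard energy bound $\nu\int_0^\infty\|\nabla\eta\|_{L^2}^2\,ds\leq\|\rho^{in}\|_{L^2}^2$ with the identity $2\nu\int_0^s\|\nabla\psi\|_{L^2}^2\,dr = \|\psi(s)\|_{L^2}^2 - \|e^{-t\cL_\perp^*}\phi\|_{L^2}^2$ and the pseudospectral decay $\|\psi(s)\|_{L^2}\leq e^{-\delta_*\lambda_\nu(t-s)+\pi/2}\|\phi\|_{L^2}$ shows that $\int_0^{t-\lambda_\nu^{-1}}\nu\|\nabla\psi\|_{L^2}^2\,ds$ is already exponentially small, so essentially all the mass is carried by the $\lambda_\nu^{-1}$-window $[t-\lambda_\nu^{-1},t]$; a Cauchy--Schwarz on that window produces the factor $(\nu\lambda_\nu^{-1})^{1/2} = \nu^{1/(m+2)}$. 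For the $(\rho_0-\eta)$-piece, a second integration by parts converts the integrand into $\nu\langle\rho_0-\eta,\Delta\psi\rangle$, and I would control $\nu\|\Delta\psi\|_{L^2}$ via the identity $\nu P_\perp\Delta\psi = -\partial_s\psi - u\cdot\nabla\psi$ valid on $L^2_\perp$ (an algebraic consequence of $\partial_s\psi = \cL_\perp^*\psi$); combining the $L^\infty_t$-bound $\nu^\alpha\|\rho^{in}\|_{L^2}$ supplied by the hypothesis with the enhanced decay of $\psi$ through a final Cauchy--Schwarz in time yields the $\nu^{\alpha/2}$ contribution.

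\textbf{Main obstacle and the limit.} The most delicate step is the treatment of the $(\rho_0-\eta)$-piece: one must carefully balance the loss of two derivatives from successive integrations by parts against the gain from the enhanced dissipation of the dual semigroup, and the identity for $\nu P_\perp\Delta\psi$ is what makes this loss tractable. Collecting the three estimates and taking the supremum over $\phi$ gives the claimed bound
\[
\|(\rho-\eta)(t)\|_{L^2}\leq C_1\max\{e^{-\delta_*\lambda_\nu t/2},\,\nu^{\alpha/2},\,\nu^{1/(m+2)}\}\|\rho^{in}\|_{L^2}.
\]
The homogenization consequence \eqref{eq:homogenization} then follows immediately: for $t\in[\lambda_\nu^{-1^-},\nu^{-1})$ one has $\lambda_\nu t\to\infty$ as $\nu\to 0$, hence $e^{-\delta_*\lambda_\nu t/2}\to 0$; meanwhile $\nu^{\alpha/2}\to 0$ and $\nu^{1/(m+2)}\to 0$, and the uniformity in $t$ is automatic since the bound is a maximum of quantities that are either monotone in $t$ or independent of $t$.
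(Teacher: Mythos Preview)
Your Duhamel-plus-duality route is genuinely different from the paper's proof, which instead iterates the dichotomy of Corollary~\ref{cormain2} on time windows of length $T_\nu=C_*\lambda_\nu^{-1}$ (energy dissipation vs.\ comparison with the model problem) and never uses the adjoint semigroup. Your treatment of the homogeneous term and of the $\eta$-piece of the forcing can be made to work, though for the latter the $L^2_t$ energy bound $\nu\int_0^\infty\|\nabla\eta\|^2\leq\|\rho_0^{in}\|^2$ alone is not enough: on the bulk interval $[0,t-K\lambda_\nu^{-1}]$ it only gives a contribution of size $e^{-\delta_*K}\|\rho^{in}\|$, which does not vanish with $\nu$. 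You need the uniform-in-time bound $\|\nabla\eta(s)\|\leq C\|\rho^{in}\|$, which follows from the $H^1$ hypothesis together with the monotonicity of $s\mapsto\|\nabla\eta(s)\|$ (this is exactly what the paper uses). With that in hand, a dyadic decomposition into windows of length $\lambda_\nu^{-1}$ does produce the $\nu^{1/(m+2)}$ factor.

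The genuine gap is in the $(\rho_0-\eta)$-piece. After your second integration by parts you write $\nu\langle\rho_0-\eta,\Delta\psi\rangle$ and propose to control $\nu\Delta\psi$ via the adjoint equation $\partial_s\psi=\cL_\perp^*\psi$. But $\rho_0-\eta$ lies in $\mathrm{Ran}(P_0)$, so by self-adjointness of $P_0$ the pairing is
\[
\nu\langle\rho_0-\eta,\Delta\psi\rangle=\nu\langle\rho_0-\eta,P_0\Delta\psi\rangle=\nu\langle\rho_0-\eta,P_0\Delta P_\perp\psi\rangle,
\]
whereas the identity you invoke reads $\nu P_\perp\Delta P_\perp\psi=-\partial_s\psi-u\cdot\nabla\psi$ and says nothing about $P_0\Delta P_\perp\psi$. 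The latter is precisely the commutator $[P_0,\Delta]\psi$, which is generically nonzero for Hamiltonians in class~\ref{HamiltonianClassAm} and is not controlled by $\|\psi\|$, $\|\nabla\psi\|$, or the adjoint equation. This is exactly the obstruction highlighted in Remark~\ref{rem:correction}: one cannot cheaply trade a derivative on the $P_0$-component for structure coming from $\cL_\perp$. The paper's iterative argument sidesteps this by only ever invoking $\|\nabla P_0\rho\|\lesssim\|\nabla\rho\|$ (Lemma~\ref{lemma:operator-estimates}) inside an energy estimate for $\rho_\perp-g$, and then absorbing $\nu\int\|\nabla\rho\|^2$ via the Case~1/Case~2 dichotomy; it never needs $\nabla(\rho_0-\eta)$ or $P_0\Delta\psi$ separately.
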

The result in \eqref{eq:homogenization} is a homogenization type result towards the effective dynamics dictated by the solution to our model problem \eqref{eq:model00}. Thus, we have a very precise control on the evolution of  $\rho$ once assumption \eqref{eq:conditional} is verified, which however we believe it can be quite challenging to prove. 
\begin{remark}
 Notice that the condition \eqref{eq:conditional} is much weaker than the one needed in Remark \ref{rem:correction}. Moreover, to prove Theorem \ref{th:main} we verify \eqref{eq:conditional} with $\nu^{\alpha}$ replaced by $\eps$. In particular, if we deform a radial flow with a perturbation of size $\nu^\alpha$, we can still recover the homogenization type result encoded in \eqref{prop:conditional}.
\end{remark}

\subsection{Main difficulties and discussion} Let us first highlight the main difficulties by comparison with the known case of radial flows. 
 For radial flows (included in the $\eps=0$ case), the pseudospectral abscissa of $\cL_\perp$ can be estimated by the one of the operators $ik \Omega(r)-\nu (\de_{rr}+r^{-1}\de_r-k^2/r^2)$ with $k\in \mathbb{Z}\setminus\{0\}$, see \cite{Gallay:2021aa}. This can be easily seen by passing to polar coordinates and taking the angular Fourier transform. The decoupling in $k$ is a property crucially related to the geometry of the streamlines, thanks to which the operator $\cL_\perp$ leaves invariant the space of $k$-fold symmetric functions (in the angular direction). 
 The decoupling in $k$ is fundamentally used to study pseudospectral bounds on a given finite number of  isolated thickened level sets, e.g. a $\delta$-neighbourhood of $\{|\Omega(r)-\lambda/k|\lesssim \delta^m\}$, and cleverly split the domain in two regions, in which the following happens:
\begin{enumerate}
\item In the thin sets, smallness can be gained via standard Poincar\'e type inequalities.
\item Outside of the thin sets, the transport operator can be cleverly used to get a good upper bound. 
\end{enumerate}
The combination of the  two points above lead to the sharp estimate on the pseudospectral abscissa after optimizing the size of the thin sets.

The natural connection between Hamiltonian and radial flows is given by an action-angle change of variables $(x,y)\to(h,\theta)$. This transforms the operator $u\cdot \nabla$  in $\Omega(h)\de_\theta$ and hence $ik\Omega(h)$ after a Fourier transform in the angle variable. The commutator $[P_k , \Delta]$, where $P_k$ denotes the projection on the $k$-th Fourier mode along the angular variable, is usually non zero. Therefore, the diffusion operator $ P_k \Delta$ does not decouple angular frequencies, meaning that the dynamics of the $k$-th Fourier mode is affected by the dynamics of all the other Fourier modes. As a consequence, a $k$-by-$k$ analysis is  challenging. 
However, the commutator  $[P_\perp, \Delta] $ can be uniformly controlled in terms of $\varepsilon$ (related to the $\epsilon$ in \eqref{intro:Hamiltonian} as explained above). 
Then, in our case we cannot think of $k$ as a simple rescaling of the spectral parameter $\lambda$ and instead we have to carefully redefine an arbitrary large number of thickened level sets (of cardinality approximately $|\lambda| \in \R_+$), avoiding their overlap as well. Here we crucially use the non-degeneracy assumption on the Hamiltonian, without which our sets are not mutually disjoint. 

We then need to quantify precisely the relation between thin sets in action-angle variables and their size in standard Cartesian coordinates,  where the geometry of the streamlines play a fundamental role. This is precisely done in Section \ref{sec:toolbox}, where we introduce our \textit{rescaled action-angle} variables that allow us to prove useful Poincar\'e type inequalities (that can be of independent interest). In particular, we scale properly our change of variables to avoid potential degeneracies when taking derivatives in the action direction. This is crucial to directly link the size of a thin sets in both set of coordinates and treat the action variable in a more uniform way. This idea can be technically useful in other contexts and we believe it is one of the main reasons why in the radial case it is more convenient to work in polar coordinates instead of a canonical action-angle change of variables (whose Jacobian is $1$ instead of $r$). 
Finally, to perform the desired estimates outside the thin sets, we use a Fourier multiplier method inspired by \cite{Gallay:2021aa, LiPseudo20, gallay2019estimations}, but here the multiplier is implicitly defined in rescaled action-angle variables. Again, the definition of the rescaled action-angle variables simplifies the estimate for derivatives of the Fourier multipliers.

\medskip

We conclude this introduction by comparing our result and the ones obtained by Vukadinovic and Kumaresan in \cite{kumaresan2018advection,Vukadinovic21}, which also establish dissipation enhancement (or homogenization) properties for more general Hamiltonians. In both cases, an approximating problem exhibiting dissipation enhancement is introduced and used to deduce quantitative estimates on \eqref{eq:advdiff}. The first main difference is the  operator replacing the full diffusion operator $\Delta$, which for us is $P_\perp\Delta P_\perp$. On the other hand, in \cite{kumaresan2018advection,Vukadinovic21} they introduce an `averaged' version of the Laplacian, denoted by $\langle\Delta\rangle$, which is related to the \textit{effective diffusion equation} found by Friedlin and Wentzell \cite{Freidlin94}. The way $\langle\Delta \rangle$ is precisely defined is  technical, but it can be easily explained informally. Indeed, let $(J,\theta)$ be the \textit{canonical} action-angle coordinates associated to the Hamiltonian $H$. Then, one has $\Delta \to A_H(J,\theta)\de_{JJ}+B_H(J,\theta)\de_{J\theta}+C_H(J,\theta)\de_{\theta\theta}$. Denoting with $\langle \cdot \rangle$ the averaging in $\theta$ (which is related to our $P_0$), the averaged diffusion operator is defined as $$\langle \Delta \rangle := \langle A_H\rangle(J)\de_{J\theta}+\langle B_H \rangle(J)\de_{J\theta}+\langle C_H \rangle(J)\de_{\theta\theta}.$$
The great advantage of the operator above is that it decouples angular frequencies, meaning that we can reduce everything to a one-dimensional problem as in the radial flow case discussed above. Moreover, the analysis of the model problem can be done for  a general class of Hamiltonians with  restrictive assumptions on the initial data (ruling out our initial data in the examples in Proposition \ref{thm:example1} and Proposition \ref{thm:example2}). For instance, the cellular flow in $\mathbb{T}^2$ defined as $H=\sin(x)\sin(y)$ is included in their analysis and a dissipation enhancement rate $\lambda_\nu=\nu^{1/2}$ can be proved for their model problem. 

The proof in \cite{kumaresan2018advection,Vukadinovic21} is based on the hypocoercivity method, which crucially requires a particular hypothesis on the initial data that exclude something concentrated close to the hyperbolic point. More precisely, the so-called `$\gamma$-terms' in the hypocoercive energy functional require that  
$$ \norm{\Omega' \rho^{in}}_{L^2}\leq C,$$
where $\Omega'$ is the derivative of the frequency function whereas $C$ is a constant independent of $\nu$. Now, close to an hyperbolic point at $h=0$, by the estimates in \cite{kumaresan2018advection,Vukadinovic21} we know that $|\Omega'|(h) \simeq 1/|h(\ln(h))^2|$. Therefore, the weight $\Omega'$ degenerates in presence of hyperbolic points and we cannot consider data really concentrated close to it. The initial data proposed in  Proposition \ref{thm:example1} does not satisfy the desired bound with $C$ independent of $\nu$,  where we show a pathologic behaviour not included in \cite{kumaresan2018advection,Vukadinovic21}.

Regarding the use of the model problem to deduce bounds for solutions to the advection--diffusion equation \eqref{eq:advdiff}, the authors in \cite{kumaresan2018advection,Vukadinovic21} 
assume higher order derivatives bounds independent on $\nu$ on the initial data (see for instance \cite[Equation (66)]{Vukadinovic21}) and
need to restrict themselves to the case $m\in \{0,1\}$\footnote{$m$ is the order of vanishing, see for instance \ref{item:ClassACondOnDerivative}.}. This is related to the fact that all the errors include derivatives whose bounds cannot be uniform in $\nu$. Thus, there is a delicate balance of powers of $\nu$. For instance, for $m=1$ (the case of the cellular flow) one error\footnote{See the bound for the term $c_{c1}^{(2)}$ in the proof of Theorem 22 in \cite{Vukadinovic21}.} is bounded by $\nu^{\frac{14}{30}}\sqrt{t}$, which is  large for any $t>\nu^{-\frac{28}{30}}$ (and exploding as $\nu\to 0$ in diffusive time scales). Therefore, one drawback of the approximation with $\langle \Delta\rangle$ is that it is not clear how to extend it to $m\geq 2$ and how to control the errors uniformly in time.

\subsection*{Plan of the paper}
In Section \ref{sec:toolbox} we introduce the main technical tools needed in the rest of the paper. Here, we introduce a version of action-angle coordinates and we present Poincaré type inequalities in thin sets adapted to the shape of the streamlines. In Section \ref{sec:proofmain}, by assuming the result in Theorem \ref{th:main2}, we prove Theorem \ref{th:main} and Corollary \ref{cormain2}. In Section \ref{sec:pseudo} we present the pseudospectral bounds for the operator $\cL_\perp$, and consequently prove Theorem \ref{th:main2}. In Section \ref{sec:examples} we present some examples, where we first show that perturbations of radial flows are in the class \ref{HamiltonianClassPEps-m}.  Then, we show the results related to the dynamics of $\rho_0$ we mentioned in the discussion above. We finally include some concluding remarks in Section \ref{sec:conclusion}.

\section{Toolbox and hypotheses on the Hamiltonians}
\label{sec:toolbox}
In this section, we first introduce the main technical tools and notation used in the rest of the paper. This include a coordinate change which simplifies computations in comparison to the standard action-angle coordinates when computing spectral estimates. With this notation at hand, we can finally state the main assumptions for the Hamiltonian $H$. Then, we prove useful results regarding Fourier multipliers which will be used in the pseudo-spectral estimates. Finally, we derive Poincar\'e inequalities in thin streamline sets.

\subsection{Rescaled action-angle variables}\label{sec:rescaled-action-angle}
When studying the problem \eqref{eq:advdiff}, the standard procedure is to consider the canonical action-angle variables, e.g. \cite{Vukadinovic21}, which is a  coordinate change (in general non-global) adapted to the level-curves of the Hamiltonian. For an Hamiltonian whose closed streamlines foliate the domain $M$ up to a zero measure set, it is well known that the action variable is the area enclosed within the streamline(s) $\{ H=s \}$\footnote{For instance, given a radial Hamiltonian $H=H(r)$, the canonical action-angle variables are the coordinates $x=\sqrt{2r}\cos(\theta), y=\sqrt{2r}\sin(\theta)$.}.
 For our purposes, we find it more convenient to work in a set of coordinates which resembles more the polar coordinates and are particularly adapted to the behavior near elliptic points of the Hamiltonian. We refer to this set of coordinates as \emph{rescaled action-angle variables} and the goal of this section is to introduce them. The technical advantage is to avoid carrying over the area function and it will be easier to quantify precisely derivatives in the new reference frame. 

To define rescaled action-angle variables we need a function $q$ defined on the image of $H$ which is well-adapted to the behaviour of the Hamiltonian around critical points. For the classes of Hamiltonians in this study, we select $q$ in the following manner. Let $M \subset \R^2$ be a bounded domain with smooth boundary. Let $H \in C^2(\overline{M})$ be an Hamiltonian with only one elliptic and non-degenerate critical point $x_0$. Set $h_0 = H(x_0)$ and $I = H(M \setminus \{ x_0 \} )$. We then define the function $q$ as
\[
 q(h) = (h - h_0)^2.
\]
We remark that the function $q$ is tailored to the Taylor expansion of the Hamiltonian around the elliptic point. In particular, since the elliptic point is non-degenerate, $q$ is selected to be quadratic. Other choices are possible and without doubt useful in different contexts. For degenerate elliptic points, selecting a higher order monomial depending on the degeneracy seems suit the problem while for Hamiltonians with several critical points a polynomial is required.

\begin{definition}[Period and frequency function]\label{def:PeriodAndFrequency}
Under the assumptions above, we define the period $T \colon I \to \R^+$ and frequency $\Omega \colon I \to \R^+$ functions as
\begin{equation}\label{eq:PeriodFunctionAndFrequencyFunction}
   T(h) = \oint_{\{ H = q(h) \}} \dfrac{1}{|\nabla H|} \, d \mathcal{H}^1, \quad \text{and} \quad \Omega(h) = \dfrac{1}{T(h)}.
\end{equation}
\end{definition}
 
\begin{definition}[rescaled action-angle change of coordinates]
 Let $M \subset \R^2$ be a bounded domain with smooth boundary. Let $H \in C^2(\overline{M})$ be an Hamiltonian with only one elliptic and non-degenerate critical point $x_0$.
Then, a bijective mapping $\Phi \colon \T \times I \to M \setminus \{ x_0 \}$ is called a \textit{rescaled action-angle} change of coordinates w.r.t. to $H$ if it satisfies the following three conditions:
 \begin{enumerate}
  \item $H(\Phi(\theta, h)) = q(h)$ for all $(\theta, h) \in \T \times I$;
  \item $\partial_{\theta} \Phi(\theta,h) = T(h) \nabla^{\perp} H (\Phi(\theta,h))$  for all $(\theta, h) \in \T \times I$ where $T \colon I \to \R$ is defined in \eqref{eq:PeriodFunctionAndFrequencyFunction};
  \item $\partial_h \Phi \in L^{\infty}(\T \times I)$.
 \end{enumerate}
\end{definition}

\begin{remark}
 The inverse of a rescaled action-angle change of coordinates can be written as 
 $$M \setminus \{ x_0 \} \ni x \mapsto (\theta(x), h(x)) \in \T \times I$$
 where $x_0$ is the elliptic point. However, to avoid confusing notation at a later stage, we will denote  $\Phi^{-1}:=\InverseMap = (\InverseMap_{1}, \InverseMap_{2}) \colon M \setminus \{ x_0 \} \to \T \times I$.
\end{remark}
Let us now show that this change of coordinates exists under appropriate conditions on the period function.
\begin{lemma}
 Let $M$ be a bounded domain with smooth boundary. Let $H \in C^2(\overline{M})$ be an Hamiltonian with only one elliptic and non-degenerate critical point $x_0$. Additionally,  assume that
\begin{equation}\label{eq:AssumptionPeriod}
 0 < \inf_{h \in I} T(h) \leq \sup_{h \in I} T(h) < \infty \quad \text{and} \quad \sup_{h \in I} T^{\prime}(h) < \infty.
\end{equation} 
Then, there exists a rescaled action-angle change of coordinates w.r.t. to $H$.
\end{lemma}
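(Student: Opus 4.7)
My plan is to construct $\Phi$ explicitly by flowing along $\nabla^{\perp} H$ from a suitably chosen transverse base curve. First I would fix a curve $\gamma \colon I \to M \setminus \{x_0\}$ satisfying $H(\gamma(h)) = q(h)$, built from a gradient-flow trajectory of $H$ emanating from $x_0$ and reparametrised via the implicit function theorem: since the level sets of $H$ foliate $M$ and $H$ is strictly monotone along $\nabla H$-trajectories, the map $h \mapsto \gamma(h)$ is a well defined $C^1$ curve transverse to every streamline. Letting $\phi^t$ denote the flow of $\nabla^{\perp} H$, I would then set
\[
\Phi(\theta, h) := \phi^{T(h)\theta}\bigl(\gamma(h)\bigr), \qquad (\theta, h) \in \T \times I,
\]
interpreting $\T = \R/\Z$. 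Condition $(1)$ is immediate because $H$ is conserved by $\phi^t$; condition $(2)$ follows from the chain rule, since $\partial_s \phi^s = \nabla^{\perp} H(\phi^s)$. Periodicity in $\theta$ is the precise content of the definition \eqref{eq:PeriodFunctionAndFrequencyFunction} of $T(h)$ as the travel time of $\nabla^\perp H$ around $\{H = q(h)\}$, giving $\Phi(\theta+1, h) = \phi^{T(h)}(\gamma(h)) = \gamma(h)$. Bijectivity onto $M \setminus \{x_0\}$ is then a consequence of the foliation assumption: every point of $M \setminus \{x_0\}$ sits on a unique Jordan streamline (determining $h$), and the flow visits each point on this streamline exactly once per period (determining $\theta$).

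\textbf{The $L^{\infty}$ bound on $\partial_h \Phi$.} This is the technical heart of the lemma. Differentiating in $h$ yields
\[
\partial_h \Phi(\theta,h) = T^{\prime}(h)\,\theta\, \nabla^{\perp} H(\Phi(\theta,h)) + D\phi^{T(h)\theta}\bigl(\gamma(h)\bigr)\bigl[\gamma^{\prime}(h)\bigr].
\]
The first term is uniformly controlled by $\sup_I |T^{\prime}| \cdot \sup_M |\nabla H|$ together with the compactness of $\T$, using \eqref{eq:AssumptionPeriod} and the $C^2$-regularity of $H$. For the second term, Gr\"onwall applied to the variational equation gives a bound $\|D\phi^{t}\| \leq \exp\!\bigl(t\,\|D^2 H\|_{L^{\infty}(M)}\bigr)$, which is uniform for $t \in [0, \sup_I T(h)]$ thanks again to \eqref{eq:AssumptionPeriod}. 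Finally, since $\gamma$ traces a gradient-flow trajectory, $\gamma^{\prime}(h)$ is parallel to $\nabla H(\gamma(h))$; differentiating $H(\gamma(h)) = q(h)$ then forces $\gamma^{\prime}(h) = q^{\prime}(h)\,\nabla H(\gamma(h))/|\nabla H(\gamma(h))|^2$. Away from $x_0$ this is trivially bounded, and near $x_0$ the non-degeneracy of the Hessian implies $|\nabla H(\gamma(h))| \asymp |h - h_0|$, while $|q^{\prime}(h)| = 2|h - h_0|$, so the quotient stays bounded.

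\textbf{Main obstacle.} The delicate step is precisely the behaviour near the elliptic point $x_0$, where $\nabla H$ vanishes: a canonical action coordinate (in which $\partial_\theta \Phi$ is normalised to $\nabla^{\perp} H$ without the $T(h)$ prefactor) would produce a $\partial_h \Phi$ that blows up there. The choice $q(h) = (h-h_0)^2$ is engineered to cancel this degeneracy exactly, since the linear vanishing of $q^{\prime}$ balances the linear vanishing of $|\nabla H|$ at $x_0$. This is the conceptual motivation behind the rescaled action-angle framework introduced in this section, and it is what allows the global $L^{\infty}$ bound on $\partial_h \Phi$ to hold all the way up to the elliptic point.
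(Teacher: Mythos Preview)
Your proposal is correct and follows essentially the same construction as the paper: the transverse curve $\gamma$ you build from a gradient-flow trajectory satisfies exactly the ODE $\gamma'(h)=q'(h)\nabla H(\gamma(h))/|\nabla H(\gamma(h))|^2$ that the paper writes down for its base curve $z$, and the definition $\Phi(\theta,h)=\phi^{T(h)\theta}(\gamma(h))$ together with the Gr\"onwall bound on $D\phi^t$ and the cancellation $|q'(h)|/|\nabla H(\gamma(h))|\asymp 1$ near $x_0$ is precisely the paper's argument. The only cosmetic difference is that the paper initialises $z$ at an arbitrary non-critical point rather than describing the curve as emanating from $x_0$, which sidesteps any limiting argument at the singular point.
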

\begin{proof}
Let $\tilde{z} \in M$ be an arbitrary point which is not a critical point of $H$. Take $\tilde{h}$ such that $$q(\tilde{h}) = H(\tilde{z})$$ and define $z \colon I \to M$ as the maximally extended solution of
\begin{align}
\begin{cases}
 \dfrac{d}{dh}{z}(h) = q^{\prime}(h) \dfrac{\nabla H}{|\nabla H|^2}(z(h)); \\
 z(\tilde{h}) = \tilde{z}.
\end{cases}
\end{align}
 Observe that 
 \begin{equation}\label{eq:H-and-z-are-consistent}
 H(z(h)) = q(h).
 \end{equation}
 Indeed
 \[
  \frac{d}{dh} \left[ H(z(h)) - q(h) \right] = \dot{z}(h)\cdot \nabla H (z(h))  - q^{\prime}(h) = 0.
 \]
 As a consequence, 
 \[
  H(z(h)) - q(h) = H(z(\tilde{h})) - q(\tilde{h}) = 0.
 \]
 Moreover, we see that
 \begin{equation}\label{eq:DerivativeOfZBounded}
  \sup_{h \in I} \dot{z}(h) < \infty
 \end{equation}
 Now, let $X \colon [0, \infty) \times M \to M$ be the flow of associated to the velocity field $u=\nabla^{\perp} H$, which solves
 \begin{align}
\begin{cases}
 \partial_t {X}(t,x) = \nabla^{\perp} H (X(t,x)); \\
 X(t,x) = x.
\end{cases}
\end{align}
The period function $T \colon I \to \R^{+}$ is then given by
\begin{equation}
\label{def:period1}
 T(h) = \inf \{ t \in (0, \infty) : X(t,z(h)) = X(0,z(h)) \}.
\end{equation}
Indeed,
\[
 \oint_{\{ H = q(h) \}} \dfrac{1}{|\nabla H|} \, d \mathcal{H}^1 = \int_{0}^{T(h)} \dfrac{1}{|\nabla H (X(t, z(h)))|} |\partial_t X(t, z(h))| \, dt = T(h).
\]
Define $\Phi \colon \T \times I \to M$ as 
\begin{equation}
\label{def:Phi}
\Phi(\theta, h) = X(\theta T(h), z(h)).
\end{equation}
The fact that $\Phi$ is bijective is clear.
Now, we observe that 
\begin{align}
 \partial_{\theta} \Phi(\theta, h) &= \nabla^{\perp} H (\Phi(\theta, h)) T(h); \label{eq:DerivativeWRTTheta} \\
 \partial_h \Phi(\theta, h) &= \theta T^{\prime}(h) \nabla^{\perp} H (\Phi(\theta, h)) + D_x X (\theta T(h), z(h)) \dot{z}(h). \label{eq:DerivativeWRTc}
\end{align}
Hence point (1) is already satisfied. Let us now verify that (2) is fulfilled.
Since $H(\Phi(0, h)) = H(z(h)) = q(h)$ and 
\[
 \dfrac{\partial}{\partial \theta} \Big[ H(\Phi(\theta, h)) \Big] = \nabla H(\Phi(\theta, h)) \cdot \partial_{\theta} \Phi(\theta, h) = 0,
\]
we deduce that 
\begin{equation}
H(\Phi(\theta, h)) = H(z(h)) = q(h) \qquad \text{for all } (\theta, h) \in \T \times I.
\end{equation}
Hence point (2) is satisfied. We now prove (3). 
A standard Gr\"onwall estimate yields
\[
 |D_x X(t,x)| \leq \e^{\| D^2 H \|_{C^0} t}.
\]
Combining this with the fact that both $T$ and $T^{\prime}$ are bounded (see Equation \eqref{eq:AssumptionPeriod}) gives us (3) thanks to \eqref{eq:DerivativeOfZBounded} and \eqref{eq:DerivativeWRTc}.
\end{proof}

\begin{remark}
 We note that although rescaled action-angle variables always exist and can be constructed using the technique in the proof above, they are not unique. 
 In fact, when considering perturbations of radial flows as in Section \ref{sec:Radial}, it will be more convenient to find rescaled action-angle variables as a perturbation of the classical radial coordinates rather than applying the method above. Moreover, the hypotheses on the period could be relaxed.
\end{remark}
The rescaled action-angle change of coordinates satisfies some useful properties we state below.
\begin{lemma}
 Let $M$ be a bounded domain with smooth boundary. Let $H \in C^2(\overline{M})$ be an Hamiltonian with only one elliptic and non-degenerate critical point. Then for any rescaled action-angle change of coordinates w.r.t. $H$ denoted by $\Phi \colon \T \times I \to M \setminus \{ x_0\}$ and its inverse $\InverseMap \colon M \setminus \{ x_0\} \to \T \times I$ we have
 \begin{align}
  \det J_{\Phi}(\theta, h) &= - q^{\prime}(h) T(h); \label{eq:JacobianLemmaOne} \\
   \nabla \InverseMap_1 (\Phi(\theta,h)) &= 
 \dfrac{1}{q^{\prime}(h) T(h)}
 \begin{pmatrix}
  - \partial_{h} \Phi^2(\theta, h) \\
  \partial_{h} \Phi^1(\theta, h) \\
 \end{pmatrix} 
 =  \dfrac{1}{q^{\prime}(h) T(h)} (\partial_{h} \Phi)^{\perp}; \label{eq:JacobianLemmaTwo} \\
  \nabla \InverseMap_2 (\Phi(\theta,h)) &= 
 \dfrac{1}{q^{\prime}(h)} \nabla H (\Phi(\theta, h)). \label{eq:JacobianLemmaThree}
 \end{align}
\end{lemma}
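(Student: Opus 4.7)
The plan is to derive all three identities from the two defining properties $H \circ \Phi = q$ and $\partial_\theta \Phi = T(h) \nabla^\perp H (\Phi)$, combined with the chain rule and the explicit formula for the inverse of a $2 \times 2$ matrix. No additional dynamical input from the flow-based construction of $\Phi$ is required.

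First I would prove the Jacobian formula \eqref{eq:JacobianLemmaOne}. Expanding
\begin{equation*}
\det J_\Phi(\theta,h) = \partial_\theta \Phi^1\, \partial_h \Phi^2 - \partial_\theta \Phi^2\, \partial_h \Phi^1,
\end{equation*}
and substituting the column $\partial_\theta \Phi = T(h)(-\partial_{x_2} H(\Phi),\, \partial_{x_1} H(\Phi))$ rewrites the right-hand side as $-T(h)\, \nabla H(\Phi) \cdot \partial_h \Phi$. Differentiating $H(\Phi(\theta,h)) = q(h)$ with respect to $h$ furnishes the key identity $\nabla H(\Phi)\cdot \partial_h\Phi = q'(h)$, so $\det J_\Phi = -q'(h)\,T(h)$. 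Since $x_0$ has been excised, $h_0 \notin I$, hence $q'(h) = 2(h - h_0) \neq 0$ on $I$, so $\Phi$ is a local diffeomorphism and its inverse Jacobian is well defined.

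With the determinant in hand, identities \eqref{eq:JacobianLemmaTwo} and \eqref{eq:JacobianLemmaThree} follow from the inverse function theorem $J_{\InverseMap}(\Phi) = [J_\Phi]^{-1}$ applied to the $2 \times 2$ matrix whose columns are $\partial_\theta \Phi$ and $\partial_h \Phi$. Reading off the rows of the inverse via the adjugate formula gives
\begin{equation*}
\nabla \InverseMap_1(\Phi) = \frac{1}{-q'(h)T(h)}(\partial_h\Phi^2,\, -\partial_h\Phi^1) = \frac{(\partial_h\Phi)^{\perp}}{q'(h)T(h)},
\end{equation*}
which is \eqref{eq:JacobianLemmaTwo}, together with
\begin{equation*}
\nabla \InverseMap_2(\Phi) = -\frac{(\partial_\theta\Phi)^{\perp}}{q'(h)T(h)}.
\end{equation*}
Substituting $\partial_\theta \Phi = T(h)\nabla^\perp H(\Phi)$ and using $(\nabla^{\perp} H)^{\perp} = -\nabla H$ collapses the last display to $\nabla \InverseMap_2(\Phi) = \nabla H(\Phi)/q'(h)$, i.e.\ \eqref{eq:JacobianLemmaThree}.

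There is no real obstacle here: once the elementary identity $\nabla H(\Phi)\cdot \partial_h\Phi = q'$ has been extracted, the rest is pure linear algebra. The only points worth double-checking are the sign conventions for $(\cdot)^{\perp}$ and for the adjugate of a $2\times 2$ matrix, which must match the convention $\nabla^\perp H = (-\partial_{x_2} H,\, \partial_{x_1} H)$ used throughout the paper.
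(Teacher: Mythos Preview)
Your proposal is correct and follows essentially the same route as the paper: both extract $\nabla H(\Phi)\cdot\partial_h\Phi = q'(h)$ by differentiating $H\circ\Phi = q$, compute $\det J_\Phi$ from this, and then read off $\nabla\InverseMap_1,\nabla\InverseMap_2$ as the rows of the explicit $2\times 2$ inverse. Your additional remark that $q'(h)\neq 0$ on $I$ (since $h_0\notin I$) is a useful sanity check the paper leaves implicit.
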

\begin{proof}
First, note that from (1) it follows that
\begin{equation}\label{eq:DerivativeOfHOfHPhi}
 \dfrac{\partial}{\partial h} \Big[ H(\Phi(\theta, h)) \Big] = \nabla H (\Phi(\theta, h)) \cdot \partial_h \Phi(\theta, h)= q^{\prime}(h)
\end{equation}
 The Jacobian matrix of $\Phi$ is 
\[
 J_{\Phi}(\theta, h) = 
 \begin{pmatrix}
  - T(h) \partial_y H(\Phi(\theta, h)) & \partial_{h} \Phi^1(\theta, h) \\ 
  T(h) \partial_x H(\Phi(\theta, h)) & \partial_{h} \Phi^2(\theta, h) \\
 \end{pmatrix}
\]
The determinant of this matrix is given by
\[
 \det J_{\Phi}(\theta, h) = - T(h) \partial_h \Phi(\theta, h) \cdot \nabla H (\Phi(\theta, h)) \stackrel{\eqref{eq:DerivativeOfHOfHPhi}}{=} - q^{\prime}(h) T(h).
\]
This proves \eqref{eq:JacobianLemmaOne}.
Finally, we observe that
 \[
 J_{\InverseMap}(\Phi(\theta, h)) = \Big( J_{\Phi}(\theta, h) \Big)^{-1} = - \dfrac{1}{q^{\prime}(h) T(h)}
 \begin{pmatrix}
  \partial_{h} \Phi^2(\theta, h)  & - \partial_{h} \Phi^1(\theta, h) \\ 
  - T(h) \partial_x H(\Phi(\theta, h)) & - T(h) \partial_y H(\Phi(\theta, h)) \\
 \end{pmatrix}
\]
which implies \eqref{eq:JacobianLemmaTwo} and \eqref{eq:JacobianLemmaThree}
\end{proof}

For the purpose of stating the assumptions on the Hamiltonian, we also introduce the notion of streamline average.

\begin{definition}[Streamline average]
We define a streamline average denoted by $\langle \cdot \rangle$ as follows: Let $f \colon M \to \R^m$ be a function, then we define $\langle f \rangle \colon M \to \R^m$ in the following manner. For any $x \in M$, let $h \in I$ be such that $x \in \{ H = q(h) \}$ and then set
\[
 \langle f \rangle (x) = \int_{\T} (f \circ \Phi)(\theta, h) \, d \theta.
\]
\end{definition}

Note that, by definition $\langle f \rangle$ is constant along streamlines and hence independent on the choice of rescaled action-angle variables.

\subsection{Fourier series in angle variable} \label{sec:Fourier}
One of the main advantages of having an angle coordinate is the possibility to perform the Fourier transform along the streamlines. Indeed,  recall that the mapping $\Phi$ defined in the previous subsection provides a change of variables from $\T \times I$ to $M$. We denote with $\mathcal{F}$ the Fourier transform on $\T$ and, given $f \in L^2(M)$, $h\in I$, and $k\in \ZZ$, we introduce the following notation 
\[
 \widehat{f \circ \Phi}(k,h) \coloneqq \mathcal{F}[(f \circ \Phi)(\cdot, h)](k)=\int_{\TT}\e^{2 \pi ik \theta}(f \circ \Phi)(\theta, h) \dd \theta.
\]
With this notation, Parseval's identity reads as
\begin{equation}\label{eq:ParsevalsIdentity}
 \int_{\T} (f_1 \circ \Phi)(\theta, h) \overline{(f_2 \circ \Phi)(\theta, h)} \, d \theta = \sum_{k \in \Z} \widehat{(f_1 \circ \Phi)}(k, h) \overline{\widehat{(f_2 \circ \Phi)}(k, h)}
\end{equation}
for all functions $f_1, f_2 \in L^2(M)$ and a.e. $h \in I$.
Then, in account of \eqref{eq:DerivativeWRTTheta}, notice that 
\begin{equation}
(u\cdot \nabla f)\circ \Phi= (\nabla^\perp H\cdot \nabla f)\circ \Phi=\frac{1}{T(h)} \de_\theta (f\circ \Phi).
\end{equation}
Thus, recalling the definition of the frequency function (see Definition~\ref{def:PeriodAndFrequency}), we get
\begin{equation}
\mathcal{F}((u\cdot \nabla f)\circ \Phi)(k,h)= \frac{ik}{T(h)} \widehat{(f\circ \Phi)}(k,h)=ik\Omega(h) \widehat{(f\circ \Phi)}(k,h).
\end{equation}
In other words, the operator $u\cdot \nabla$ becomes  $\Omega \de_\theta$ in the new reference frame. The projection onto the $0$-th angular mode, defined as
 \begin{align}
 (P_0 f \circ \Phi)(\theta, h) = \int_\T (f \circ \Phi)(\theta, h) \dd \theta
\end{align}
is equivalent to the projection onto the kernel of $u\cdot\nabla $ and average along the streamlines. 
In terms of the Fourier series notation introduced above, we have 
\begin{align}
\label{def:P0Fourier}
P_0(\widehat{ f \circ \Phi})(k,h)=  \begin{cases}
  \widehat{f \circ \Phi}(0,h) &\text{if $k = 0$,} \\
  0 &\text{if $k \neq 0$,}
 \end{cases}
 \qquad 
 P_\perp =I-P_0.
\end{align}
Thanks to the Fourier series characterization, we see that $P_0$, and therefore $P_\perp$, commutes with $u\cdot \nabla $.

We also need to use more general Fourier multipliers, for which we use the following notation. We say that $\cW \colon L^2(M) \to L^2(M)$ is a Fourier multiplier with with Fourier symbol $w(k,h):\ZZ\times I\to \RR$ if
 \begin{equation}
 \label{def:W}
 \widehat{((\cW f) \circ \Phi)}(k, h) = w(k, h) \widehat{f \circ \Phi}(k, h).
 \end{equation}
For instance, $P_0$ is a Fourier multiplier with symbol $\mathbbm{1}_{k=0}(h)$.

\subsection{Hypotheses on the Hamiltonian}\label{sec:HypHamiltonians}
We are now ready to introduce the two classes of Hamiltonians for which we prove Theorems~\ref{th:main} and \ref{th:main2} and Corollary~\ref{cormain2}.
We start with the least restrictive class for which we can prove Theorem~\ref{th:main2} and Corollary~\ref{cormain2}.

\begin{definition}[The Classes \mylabel{HamiltonianClassA}{\normalfont{($\mathrm{A}$)}} and \mylabel{HamiltonianClassAm}{($\mathrm{A}^m$)}]\label{def:ClassHamiltoniansA}
A Hamiltonian $H \colon M \to \R$ is said to belong to the class ($\mathrm{A}$) if the following holds:
\begin{enumerate}
 \item[\mylabel{item:ClassACond1}{(A-1)}] (Regularity) $H \in C^3(\overline{M})$ 
 \item[\mylabel{item:ClassACond2}{(A-2)}] (Non-degenerate elliptic point) The Hamiltonian $H$ has one unique critical point which is non-degenerate and is elliptic, i.e. there exists a $x_0 \in M$ such that
 \[
  \nabla H(x_0) = 0 \quad \text{and} \quad \det D^2 H(x_0) > 0.
 \] 
 \item[\mylabel{item:ClassACond3}{(A-3)}] (Properties of the period) The function $\Omega \colon I \to \R^+$ belongs to $C^1(I)$ and is bounded by strictly positive constants from below and above and $\Omega^{\prime}$ is bounded from above. 
 \item[\mylabel{item:ClassACond4}{(A-4)}] (Weak uniformity) There exists a constant $C$ such that for all streamlines $\{ H = s \}$ such that $x_0 \not \in \{ H = s \}$ we have
 \[
  \dfrac{\sup_{x \in \{ H = s \}} |\nabla H(x)|}{\inf_{x \in \{ H = s \}} |\nabla H(x)|} \leq C.
 \] 
\end{enumerate}
An Hamiltonian $H$ belonging to ($\mathrm{A}$) is said to belong to the class ($\mathrm{A}^m$) where $m \geq 1$ is an integer if 
\begin{enumerate}
\item[\mylabel{item:ClassACondOnDerivative}{($\textup{A}^{\prime}_m$)}] $\Omega^{\prime}$ vanishes only at the elliptic point and the order of vanishing is $m$. In other words, there exists a constant $c > 0$ such that
 \[
  |\Omega^{\prime}(h)| \geq c |h|^{m-1} \quad \forall h \in I.
 \]
\end{enumerate} 
\end{definition}

For Hamiltonians belonging to this class, it is not hard to show the following result using the fact that the elliptic point is non-degenerate. The proof simply follows by Taylor expansion around the elliptic point.
\begin{lemma}
\label{lem:Hamiltonian}
 Let $H \colon M \to \R$ be an Hamiltonian of class \ref{HamiltonianClassA}. Then, there exists $R > 0$ and $\Lambda > 0$ such that for all $0 < r < R$
 \begin{align}
  \| \nabla H \|_{L^{\infty}(B_r(x_0))} &\leq \Lambda r; \\
  \{ x \in M : H(x) - H(x_0) < r^2 \} &\subseteq B_{\Lambda r} (0). 
 \end{align}
\end{lemma}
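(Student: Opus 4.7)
The plan is to derive both bounds via a Taylor expansion of $H$ around the elliptic point $x_0$, exploiting the $C^3$ regularity from \ref{item:ClassACond1} and the non-degeneracy condition from \ref{item:ClassACond2}.

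For the first inequality, I would use that $\nabla H(x_0)=0$ and $H\in C^3(\overline M)$, so the mean value theorem (equivalently, the first-order Taylor remainder) yields $|\nabla H(x)|\le \|D^2 H\|_{L^\infty(\overline M)}\,|x-x_0|$ for every $x\in \overline M$. Restricting to $B_r(x_0)$ immediately gives the first bound with any $\Lambda_1\ge \|D^2 H\|_{L^\infty(\overline M)}$, and this is valid for every $r>0$.

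For the second inclusion I would Taylor expand $H$ to second order around $x_0$:
\[
H(x)-H(x_0)=\tfrac12 (x-x_0)^T D^2 H(x_0)(x-x_0)+R(x),\qquad |R(x)|\le C|x-x_0|^3,
\]
where $C$ depends on $\|D^3 H\|_{L^\infty}$. Since $x_0$ is elliptic and non-degenerate, $D^2 H(x_0)$ is definite; without loss of generality (by replacing $H$ with $-H$ if necessary) it is positive definite with smallest eigenvalue $\mu>0$. Therefore, choosing $R_0>0$ small enough that $C R_0\le \mu/4$, we obtain $H(x)-H(x_0)\ge (\mu/4)|x-x_0|^2$ for all $x\in B_{R_0}(x_0)$. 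Consequently, if $x\in B_{R_0}(x_0)$ and $H(x)-H(x_0)<r^2$, then $|x-x_0|<2r/\sqrt{\mu}$, which is exactly the local version of the desired inclusion.

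The one remaining point, and the only mildly nontrivial step, is to rule out that the sublevel set $\{H-H(x_0)<r^2\}$ contains points outside $B_{R_0}(x_0)$. Here I would use that $x_0$ is the unique critical point in $M$ (condition \ref{item:ClassACond2}) together with the fact that $H$ is constant on $\partial M$: since $x_0$ is a strict local minimum, $H-H(x_0)$ is continuous and strictly positive on the compact set $\overline M\setminus B_{R_0}(x_0)$, so $\inf_{\overline M\setminus B_{R_0}(x_0)}(H-H(x_0))=:\eta>0$. Shrinking $R:=\min\{R_0,\sqrt{\eta}\}$, for every $r<R$ the set $\{H-H(x_0)<r^2\}$ is contained in $B_{R_0}(x_0)$, and the previous local estimate then yields the global inclusion with $\Lambda_2=2/\sqrt{\mu}$. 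Finally, $\Lambda:=\max\{\Lambda_1,\Lambda_2\}$ works for both bounds. The main subtlety, rather than a genuine obstacle, is simply making sure that $x_0$ is genuinely the global extremum so that the sublevel sets stay localized; this is automatic from the hypothesis of a unique non-degenerate critical point and constant boundary data.
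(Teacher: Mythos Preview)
Your proposal is correct and follows exactly the approach indicated in the paper, which simply states that ``the proof simply follows by Taylor expansion around the elliptic point'' without further detail. Your argument fleshes out precisely this idea: the gradient bound from the first-order remainder and the sublevel-set inclusion from the second-order expansion combined with non-degeneracy, with the compactness argument to globalize being the only additional (and straightforward) observation.
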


\begin{definition}\label{def:ClassHamiltoniansP}
Let $\eps \in (0,1)$. An Hamiltonian $H \colon M \to \R$ is said to belong to the class \mylabel{HamiltonianClassPEps}{$(\mathrm{P}_\eps)$} if \ref{item:ClassACond1}, \ref{item:ClassACond2}, \ref{item:ClassACond3} are fulfilled and the following holds:
\begin{enumerate}
 \item[\mylabel{item:ClassPCond4}{(P-4)}] (Uniformity) For any $x \in M$, we have
 $$ \left| |\nabla H|^2(x) - \left\langle |\nabla H|^2 \right\rangle(x) \right| \leq \eps |\nabla H|^2(x). $$ 
 \item[\mylabel{item:ClassPCond5}{(P-5)}] (Transversality) There exists an action angle change of variables such that
 \begin{equation}\label{eq:transversality}
  |\partial_{\theta} \Phi(\theta, h) \cdot \partial_{h} \Phi(\theta, h)| \leq \eps |\partial_{\theta} \Phi(\theta, h)| |\partial_{h} \Phi(\theta, h)|.
  \end{equation} 
\end{enumerate}
Moreover, an Hamiltonian $H$ belonging to \ref{HamiltonianClassPEps} is said to belong to \mylabel{HamiltonianClassPEps-m}{$(\mathrm{P}_\eps^m)$} where $m \geq 1$ is an integer if it satisfies \ref{item:ClassACondOnDerivative}.
\end{definition}

\begin{remark}
We see that any radial flow with only one critical elliptic non-degenerate point belongs to $(\mathrm{P}_0)$.
\end{remark}

\begin{remark}
We note that \ref{item:ClassPCond4} implies \ref{item:ClassACond4}. Hence, any Hamiltonian belonging to \ref{HamiltonianClassPEps} belongs to \ref{HamiltonianClassA}.
\end{remark}

\subsection{Fourier multipliers and derivatives}
In the sequel, we need a precise control of the interplay between derivatives and the Fourier multipliers defined in \eqref{def:W}. Indeed, the operators $\nabla$ and $\cW$ do not commute for a general Hamiltonian (as opposed to the shear or radial flow case). However, for Hamiltonians in the classes introduced above we are able to prove the following. 
  \begin{lemma} \label{lemma:operator-estimates}
 Let $H$ be an Hamiltonian belonging to \ref{HamiltonianClassA}. Let $\cW$ be a Fourier multiplier with symbol $w$ as defined in \eqref{def:W}. Let $C,N>0$ be two fixed constants. Then, there exists a constant $C^{\prime}>0$ depending only on $C,N$ and $H$ such that the following hold true:
  \begin{itemize}
  \item[a)] If $\norm{w}_{L^\infty(\mathbb{Z}\times I)} \leq C$ and $\norm{\de_h w}_{L^\infty(\mathbb{Z}\times I)}\leq N$, then
  \begin{equation}
 \label{bd:nablaWf}
 \| \nabla 
 (\cW f) \|_{L^2(M) } \leq C^{\prime} \| \nabla f \|_{L^2(M)} + C^{\prime} N \| f \|_{L^2(M) } \quad \forall f \in L^2(M). 
 \end{equation}
   \item[b)] If $\norm{w}_{L^\infty(\mathbb{Z}\times I)} \leq C$ and $\norm{\de_h w(k)}_{L^\infty(I)} \leq C |k|$, then
  \begin{equation}
 \label{bd:nablaWf2}
 \| \nabla 
 (\cW f) \|_{L^2(M) } \leq C^{\prime} \| \nabla f \|_{L^2(M)}  \quad \forall f \in L^2(M).
 \end{equation}

\end{itemize}
 \end{lemma}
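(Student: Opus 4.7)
The plan is to work in the rescaled action-angle coordinates from Section~\ref{sec:rescaled-action-angle} and exploit the Fourier series representation in $\theta$. Since $\cW$ acts diagonally on angular Fourier modes, the angle derivative $\partial_\theta$ commutes with $\cW$ up to the $L^\infty$ bound on $w$, while the action derivative $\partial_h$ generates a commutator whose Fourier symbol is $\partial_h w(k,h)$. The chain rule together with the Jacobian formulas \eqref{eq:JacobianLemmaTwo}--\eqref{eq:JacobianLemmaThree} gives
\[
\nabla(\cW f)(\Phi(\theta,h)) = \partial_\theta\bigl((\cW f)\circ\Phi\bigr)\,\nabla\Psi_1(\Phi(\theta,h)) + \partial_h\bigl((\cW f)\circ\Phi\bigr)\,\nabla\Psi_2(\Phi(\theta,h)),
\]
so the question reduces to controlling the $L^2$-norms of $\partial_\theta((\cW f)\circ\Phi)$ and $\partial_h((\cW f)\circ\Phi)$ on $\T\times I$ with Jacobian weight $|q'(h)|T(h)$, in terms of $\|\nabla f\|_{L^2(M)}$ and $\|f\|_{L^2(M)}$.

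Differentiating the Fourier series $((\cW f)\circ\Phi)(\theta,h) = \sum_k w(k,h)\widehat{f\circ\Phi}(k,h)e^{-2\pi ik\theta}$ gives
\[
\widehat{\partial_\theta((\cW f)\circ\Phi)}(k,h) = w(k,h)\,\widehat{\partial_\theta(f\circ\Phi)}(k,h),
\]
\[
\widehat{\partial_h((\cW f)\circ\Phi)}(k,h) = w(k,h)\,\widehat{\partial_h(f\circ\Phi)}(k,h) + \partial_h w(k,h)\,\widehat{f\circ\Phi}(k,h).
\]
Using $\|w\|_{L^\infty}\leq C$ and Parseval on each $\T$-fiber, the $w$-terms are dominated by $C\|\partial_\theta(f\circ\Phi)\|_{L^2}$ and $C\|\partial_h(f\circ\Phi)\|_{L^2}$ respectively; the identities $\partial_\theta(f\circ\Phi) = T(h)\nabla^\perp H(\Phi)\cdot\nabla f(\Phi)$ and $\partial_h(f\circ\Phi) = \partial_h\Phi\cdot\nabla f(\Phi)$, combined with the pointwise bounds on $\nabla H$, $T$, and $\partial_h\Phi$ coming from the definition of rescaled action-angle variables, then transfer these into bounds by $C'\|\nabla f\|_{L^2(M)}$. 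For part~(a) the remaining commutator term $\partial_h w\,\widehat{f\circ\Phi}$ is estimated via $\|\partial_h w\|_{L^\infty}\leq N$ to yield $C'N\|f\|_{L^2(M)}$, producing \eqref{bd:nablaWf}.

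For part~(b) the stronger bound $|\partial_h w(k,h)|\leq C|k|$ in particular forces $\partial_h w(0,h)\equiv 0$, so only $k\neq 0$ contributes to the commutator, and one may rewrite
\[
\partial_h w(k,h)\,\widehat{f\circ\Phi}(k,h) = \frac{\partial_h w(k,h)}{-2\pi ik}\,\widehat{\partial_\theta(f\circ\Phi)}(k,h),
\]
where the new multiplier $\partial_h w(k,h)/k$ is uniformly bounded by $C/(2\pi)$. Parseval now controls the commutator term by $\|\partial_\theta(f\circ\Phi)\|_{L^2}\lesssim\|\nabla f\|_{L^2(M)}$, eliminating the $\|f\|_{L^2}$ contribution and yielding \eqref{bd:nablaWf2}.

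The main technical obstacle lies in keeping the transfer constants uniform up to the elliptic point, where both $q'(h)$ and $|\nabla H|$ vanish linearly. The weight $|\partial_h\Phi|^2/(|q'(h)|T(h))$ appearing in the $\partial_\theta$-contribution becomes singular at $h=h_0$ and must be absorbed against the vanishing of $\partial_\theta((\cW f)\circ\Phi)$ there, which is forced by the collapse of the whole $\theta$-fiber to $x_0$. The weak uniformity assumption \ref{item:ClassACond4}, together with Lemma~\ref{lem:Hamiltonian}, is precisely what makes $|\nabla H|$ behave comparably to its streamline average and allows these weights to be controlled by constants depending only on $H$.
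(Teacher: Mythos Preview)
Your proposal is correct and follows essentially the same route as the paper. The only cosmetic difference is that you package the decomposition via the chain-rule identity $\nabla g\circ\Phi=(\partial_\theta(g\circ\Phi))\,\nabla\Psi_1\circ\Phi+(\partial_h(g\circ\Phi))\,\nabla\Psi_2\circ\Phi$, whereas the paper inverts the (non-orthogonal) frame $(\partial_\theta\Phi/|\partial_\theta\Phi|,\partial_h\Phi/|\partial_h\Phi|)$ and uses the determinant lower bound \eqref{eq:DetermiantEstimateFromBelow}; after that both arguments are identical (Parseval on each $\theta$-fiber, the symbol bounds on $w$ and $\partial_h w$, and the compensation $|\nabla H\circ\Phi|\sim q'(h)$ near the elliptic point that neutralizes the singular weight). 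One small remark: the two-sided comparison $|\nabla H\circ\Phi|\sim q'$ that you need comes already from \ref{item:ClassACond2} (non-degeneracy, via Lemma~\ref{lem:Hamiltonian}) together with compactness away from $x_0$, rather than specifically from \ref{item:ClassACond4}.
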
 

  \begin{proof}[Proof of Lemma \ref{lemma:operator-estimates}]
By the definition \eqref{def:W}, we know that the Fourier multiplier $\cW$ is adapted to the coordinates defined through the map $\Phi$. One possibility to understand the operator $\nabla \cW$ is to pass from the standard $(\de_{x_1},\de_{x_2})$ derivatives to derivatives in the directions $(\de_{\theta}\Phi/|\de_\theta \Phi|,\de_h\Phi/|\de_h \Phi|)$. 
To do so, we notice that
\begin{equation}\label{eq:DetermiantEstimateFromBelow}
\left|\operatorname{det} \left (\frac{\de_\theta\Phi}{|\de_\theta\Phi|}, \frac{\de_h \Phi}{|\de_h \Phi|} \right )\right| = \dfrac{q^{\prime}(h)T(h)}{|\de_h\Phi| |\nabla H\circ \Phi|} = \frac{q^{\prime}(h)}{|\de_h\Phi| |\nabla H\circ \Phi|} > c
\end{equation}
for some constant $c > 0$ (which depends on the Hamiltonian $H$).
Note that we have crucially used the assumption that $H$ belongs to ($\mathrm{A}$).  
Thus, we can write 
\begin{equation}
\label{eq:graddehphi}
\nabla (\cW f)\circ\Phi=\left (\frac{\de_\theta\Phi}{|\de_\theta\Phi|}, \frac{\de_h \Phi}{|\de_h \Phi|} \right )(\nabla (\cW f)\circ\Phi).
\end{equation}
Observing that 
\begin{equation}
\de_h\Phi \cdot (\nabla(\cW f)\circ \Phi)=\de_h(\cW f\circ \Phi), \qquad \de_\theta\Phi \cdot (\nabla(\cW f)\circ \Phi)=\de_\theta(\cW f\circ \Phi),
\end{equation}
and combining it with \eqref{eq:DetermiantEstimateFromBelow} we arrive at 
\begin{align}
\norm{\nabla (\cW f)}^2_{L^2(M)}&=\int_{I}\int_\TT |\nabla(\cW f)\circ \Phi|^2(\theta,h)q^{\prime}(h)T(h)\dd \theta \dd h\\
&\leq c^{\prime} \underbrace{\int_{I}\int_\TT \left(\frac{1}{|\de_h\Phi|^2} |\de_h (\cW f\circ \Phi)|^2\right)(\theta,h) q^{\prime}(h)T(h)\dd \theta \dd h}_{= \mathcal{I}_h}\\
&\qquad + c^{\prime} \underbrace{\int_{I}\int_\TT \left(\frac{1}{|\de_\theta\Phi|^2} |\de_\theta(\cW f\circ \Phi)|^2 \right)(\theta,h) q^{\prime}(h) T(h)\dd \theta \dd h}_{= \mathcal{I}_\theta}.
\end{align}
for some constant $c^{\prime}$.
We start by estimating $\cI_\theta$. We note that
\begin{equation}
|\de_\theta \Phi(\theta, h)| = T(h) |\nabla H (\Phi(\theta, h) ) | \geq c^{\prime \prime} T(h) q^{\prime}(h)
\end{equation}
for some constant $c^{\prime \prime}$. 
Therefore, using the bound above, Plancherel's Theorem and the fact that $\norm{w}_{L^\infty}\lesssim 1$, we obtain
\begin{align}
\cI_\theta&\leq \dfrac{1}{(c^{\prime \prime})^2} \int_{I}\int_\TT \left(\frac{1}{(T(h)q^{\prime}(h))^2} |\de_\theta(\cW f\circ \Phi)|^2 \right)(\theta,h)q^{\prime}(h)T(h)\dd \theta \dd h\\
&\leq \dfrac{1}{(c^{\prime \prime})^2} \int_I \sum_{k\neq 0} \frac{|k|^2}{(T(h)q^{\prime}(h))^2}|(w \widehat{f\circ \Phi})|^2(k,h) q^{\prime}(h)T(h)\dd h\\
&\leq \dfrac{C^2}{(c^{\prime \prime})^2} \int_I \sum_{k\neq 0} \frac{1}{(T(h)q^{\prime}(h))^2}|\widehat{(\de_\theta f\circ \Phi)}|^2(k,h) q^{\prime}(h)T(h)\dd h\\
&\leq \dfrac{C^2}{(c^{\prime \prime})^2} \int_I \int_\TT\left(\frac{|\de_\theta\Phi(\theta, h)|}{T(h)q^{\prime}(h)}\right)^2|\nabla f\circ \Phi|^2(\theta,h) q^{\prime}(h)T(h)\dd \theta \dd h.
\end{align}
However, since 
\begin{equation}
\sup_{\theta \in \T, h \in I} \frac{|\de_\theta\Phi(\theta, h)|}{T(h)q^{\prime}(h)} = \sup_{\theta \in \T, h \in I} \frac{|\nabla H(\Phi(\theta, h))|}{q^{\prime}(h)} < \infty,
\end{equation}
we conclude that 
\begin{equation}
\label{bd:Itheta}
\cI_\theta \leq C^{\prime}_1 \norm{\nabla f}^2.
\end{equation}
for some $C^{\prime}_1$.
Turning our attention to $\cI_h$, thanks to \eqref{eq:DerivativeOfHOfHPhi}, we have 
\begin{equation}
q^{\prime}(h) \leq |\de_h \Phi(\theta, h)||\nabla H(\Phi(\theta, h))| \quad \text{and hence} \quad |\de_h\Phi(\theta, h)| \geq \frac{q^{\prime}(h)}{|\nabla H(\Phi(\theta, h))|} \geq c^{\prime \prime}.
\end{equation}
Thus, using this bound and proceeding similarly to what we have done for $\cI_\theta$, we get
\begin{align}
\label{bd:Ih1}
\cI_h &\leq \dfrac{1}{(c^{\prime \prime})^2} \int_I \sum_{k \in \Z} |\de_h(w \widehat{f\circ \Phi})|^2(k,h) q^{\prime}(h)T(h)\dd h\\
\label{bd:Ih2}
&\leq \dfrac{1}{(c^{\prime \prime})^2} \underbrace{\int_I \sum_{k \in \Z} |\de_h w|^2|\widehat{f\circ \Phi}|^2(k,h) q^{\prime}(h)T(h)\dd h}_{=\cI_{h,1}} + \dfrac{1}{(c^{\prime \prime})^2} \underbrace{\int_I \sum_{k \in \Z} |\de_h(\widehat{f\circ \Phi})|^2(k,h) q^{\prime}(h)T(h)\dd h}_{=\cI_{h,2}}.
\end{align}
For $\cI_{h,2}$, using Plancherel's Theorem and exploiting the fact that $\partial_h \Phi \in L^{\infty}(\T \times I)$, we have 
\begin{align}
\label{bd:Ih20}
\cI_{h,2} = \int_I \int_\TT|\de_h\Phi|^2|(\nabla f)\circ \Phi)|^2(\theta,h) q^{\prime}(h)T(h)\dd \theta \dd h \leq \| \partial_h \Phi \|_{L^{\infty}(\T \times I)}^2 \norm{\nabla f}^2_{L^2(M)}.
\end{align}
Finally, to complete the proof, we control the term $\cI_{h,1}$. Here we split the argument depending on whether we are considering the case (a) or (b). In the case (a), we obtain
\begin{align}
\label{bd:Ih20}
\cI_{h,1} \leq N^2 \int_I \int_\TT| \widehat{f\circ \Phi}|^2(\theta,h) q^{\prime}(h)T(h)\dd \theta \dd h\lesssim N^2 \norm{f}^2_{L^2(M)}.
\end{align}
In the case (b), we use the fact that $|\de_\theta \Phi| \leq \| \nabla H \|_{L^{\infty}(M)} \cdot \sup_{h \in I} T(h) $ to obtain
\begin{align}
\label{bd:Ih10}
\cI_{h,1}&\leq C^2 \int_I \sum_{k\neq 0} |k|^2 |\widehat{f\circ \Phi}|^2(k,h) q^{\prime}(h)T(h)\dd h\\
&\leq C^2 \int_I \int_{\mathbb{T}}|\de_\theta(f\circ \Phi)|^2(\theta,h) q^{\prime}(h)T(h)\dd \theta\dd h\\
&\leq C^2 \int_I \int_{\mathbb{T}}|\de_\theta\Phi|^2|(\nabla f)\circ \phi)|^2(\theta,h) q^{\prime}(h)T(h)\dd \theta\dd h \leq C^{\prime}_{2} \norm{\nabla f}^2_{L^2(M)},
\end{align}
for some $C^{\prime}_{2}$.
This concludes the proof.
\end{proof}

\subsection{Poincar\'e-type inequalities on thin sets} 
One of the key ingredients  for the subsequent pseudospectral estimates, are Poincaré-type inequalities on thin neighbourhoods of streamlines. 
 These are needed to quantitatively estimate where one can gain something from the transport operator, as we explain more precisely in Section \ref{sec:pseudo}. The prototypical bound we aim at proving is the following, which is what one needs in the case of a radial Hamiltonian:
\begin{equation}
\int_{\{R_1\leq |x|\leq R_2\}}|f|^2(x)\dd x\leq 2(R_2-R_1)\norm{f}_{L^2}\norm{\nabla f}.
\end{equation}
A proof of this bound can be found in \cite{Gallay:2021aa}, and notice that one can effectively gain smallness parameters whenever $|R_2-R_1|\ll 1$.
 However, to the best of our knowledge, there are no such results that can be applied as a black box in our context. Therefore, in this section we prove in detail the estimates we need, some of which might be of independent interest.

With the notation introduced in Section \ref{sec:toolbox},  we denote a rectangle in $\TT\times I$ as
\begin{equation}
\label{def:Box}
 Q_{\eta, \gamma}(\overline{\theta}, \overline{h}) \coloneqq \{ (\theta, h) \in \T \times I : |\theta - \overline{\theta}| < \eta, |h - \overline{h}| < \gamma \} \subset \T \times I.
\end{equation}
In account of the periodicity of the domain, when $\eta=1$ there is no need to specify $\overline{\theta}$ and we simply denote 
\begin{equation}
\label{def:Box1}
 Q_{ \gamma}(\overline{h})= Q_{1, \gamma}(0, \overline{h})
 \end{equation}
Thanks to the map $\Phi$ in \eqref{def:Phi}, we define the \textit{stream-adapted} rectangles as
\begin{equation}
\label{def:cBox}
\mathcal{Q}_{\eta,\gamma}(\bar{\theta},\bar{h}):=\Phi(Q_{\eta, \gamma}(\overline{\theta}, \overline{h})). 
\end{equation}
Notice that $\mathcal{Q}_{\cdot}(\cdot)$ is a collection of `curly' rectangles around the streamlines $\{H= q(\bar{h}) \}$. 
In all the definitions above, we refer to $(\overline{\theta}, \overline{h})$ as the \emph{center} of the rectangle or stream-adapted rectangle. Whenever it is clear from the context, we will choose not to write $(\overline{\theta}, \overline{h})$ to lighten the notation.
\begin{remark}
When $\eta=1$, $\mathcal{Q}_{\gamma}(\bar{h})$ is a tubular neighbourood of the streamlines $\{H= q(\bar{h})\}$, where we do not need to specify $\bar{\theta}$ since we are considering the whole $\TT$ in the angular direction. The width of the set $\mathcal{Q}_{\gamma}(\bar{h})$ is  related to $\gamma$ through the map $\Phi$. For instance, one might have Hamiltonians where the width ranges from $\gamma$ to $\gamma^2$, as it happens in the cellular flow $H=\sin(x)\sin(y)$ for streamlines close to the axis.
\end{remark}
The following result is the main result of this section.
\begin{lemma}
\label{lem:Poincare}
Let $(\overline{\theta}, \overline{h}) \in \T \times I$, $\eta, \gamma >0$ and  $g \in H^1(M)$. Then, the following inequalities hold true for stream-adapted rectangles centered in $(\overline{\theta}, \overline{h})$:
\begin{enumerate}
\item For any integer $K$ such that $1 < K \ll \gamma^{-1}$, one has
\begin{align}
 \begin{split}
   \| f \|_{L^2(\mathcal{Q}_{\eta, \gamma})}^2 &\leq \frac{8}{K} \| f \|_{L^2(\mathcal{Q}_{\eta, K \gamma})}^2 + 4 \gamma \| f \|_{L^2(\mathcal{Q}_{\eta, K \gamma})}\| \nabla f \|_{L^2(\mathcal{Q}_{\eta, K \gamma})} \left\| \partial_h \Phi \right\|_{L^{\infty}(Q_{\eta, K \gamma})} \\
  &\qquad + 2 \gamma \| f \|_{L^2(\mathcal{Q}_{\eta, K \gamma})}^2 \left\| [\log(q^{\prime} T)]^{\prime} \right\|_{L^{\infty}(Q_{\eta, K \gamma})}. \label{eq:SecondPoincare}
  \end{split}
\end{align}

\item For any integer $N$ such that $1 < N \ll \eta^{-1}$, one has
\begin{align}
\begin{split}
\| f \|_{L^2(\mathcal{Q}_{\eta, \gamma})}^2 &\leq \frac{8}{N} \| f \|^2_{L^2(\mathcal{Q}_{N \eta, \gamma})} + 4 \eta \| f \|_{L^2(\mathcal{Q}_{N \eta, \gamma})} \| \nabla f \|_{L^2(\mathcal{Q}_{N \eta, \gamma})} \| (\nabla^{\perp} H \circ \Phi) T \|_{L^{\infty}(Q_{N \eta, \gamma})} . \label{eq:FirstPoincare}
\end{split}
\end{align}

\item If $g$ satisfies the additional property that $P_0 f = 0$, and  $\eta = 1$, one has
\begin{equation}
   \| f \|_{L^2(\mathcal{Q}_{\gamma}( \overline{h}))} \leq \| \nabla f \|_{L^2(\mathcal{Q}_{ \gamma}(\overline{h}))} \| T (\nabla H \circ \Phi) \|_{L^{\infty}(Q_{\gamma}( \overline{h}))}. \label{eq:ThirdPoincare}
\end{equation}
\end{enumerate}
\end{lemma}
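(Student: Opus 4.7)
The plan is to pull everything back to $(\theta,h)$-coordinates via $\Phi$, where $dx = q'(h)T(h)\,d\theta\,dh$ by \eqref{eq:JacobianLemmaOne} and the stream-adapted rectangle $\mathcal{Q}_{\eta,\gamma}$ becomes the Cartesian rectangle $Q_{\eta,\gamma}$. Transverse derivatives of $f\circ\Phi$ are controlled by $\nabla f\circ\Phi$ through $|\partial_\theta(f\circ\Phi)|\le |\partial_\theta\Phi|\,|\nabla f\circ\Phi|$ and the analogous estimate in $h$. With this dictionary, inequalities (1) and (2) both follow from a pigeonhole plus fundamental-theorem-of-calculus (FTC) argument in the transverse direction, while (3) reduces to Poincaré--Wirtinger on $\T$.

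For (1), I would partition the $h$-interval $(\bar h-K\gamma,\bar h+K\gamma)$ into $K$ disjoint strips of width $2\gamma$. Pigeonhole produces a strip $S_*$ with $\|f\|_{L^2(\mathcal{Q}(S_*))}^2\le \tfrac{1}{K}\|f\|_{L^2(\mathcal{Q}_{\eta,K\gamma})}^2$, and for each $\theta$ the mean value theorem provides $h_*(\theta)\in S_*$ at which $|f\circ\Phi|^2 q'T$ equals its $h$-average over $S_*$. For any $h\in(\bar h-\gamma,\bar h+\gamma)$, write by FTC
\begin{equation}
|f\circ\Phi|^2 q'T(\theta,h) = |f\circ\Phi|^2 q'T(\theta,h_*(\theta)) + \int_{h_*(\theta)}^{h}\frac{d}{ds}\Big[|f\circ\Phi|^2 q'T\Big](\theta,s)\,ds,
\end{equation}
and expand the derivative by Leibniz using $\partial_s(f\circ\Phi)=\nabla f\circ\Phi\cdot \partial_h\Phi$ and $[q'T]'=q'T\,[\log(q'T)]'$. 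Integrating in $h$ over $(\bar h-\gamma,\bar h+\gamma)$ and then in $\theta$ over $(\bar\theta-\eta,\bar\theta+\eta)$ splits the estimate into three contributions: the boundary term is absorbed by the pigeonhole choice, the cross term is controlled by $4\gamma\,\|\partial_h\Phi\|_{L^\infty(Q_{\eta,K\gamma})}\|f\|_{L^2(\mathcal{Q}_{\eta,K\gamma})}\|\nabla f\|_{L^2(\mathcal{Q}_{\eta,K\gamma})}$ after Cauchy--Schwarz, and the Jacobian correction contributes $2\gamma\,\|[\log(q'T)]'\|_{L^\infty(Q_{\eta,K\gamma})}\|f\|_{L^2(\mathcal{Q}_{\eta,K\gamma})}^2$, matching \eqref{eq:SecondPoincare}.

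Part (2) is the same procedure rotated ninety degrees: partition $(\bar\theta-N\eta,\bar\theta+N\eta)$ into $N$ strips of $\theta$-width $2\eta$, apply pigeonhole, and perform FTC in $\theta$. The simplification is that $q'(h)T(h)$ has no $\theta$-dependence, so no $[\log(q'T)]'$ term arises; the cross term picks up $|\partial_\theta\Phi| = T\,|\nabla^\perp H\circ\Phi|$ in place of $|\partial_h\Phi|$, giving \eqref{eq:FirstPoincare}. For (3), the hypothesis $P_0 f=0$ combined with $\eta=1$ means $\int_\T (f\circ\Phi)(\theta,h)\,d\theta = 0$ for a.e.\ $h$, so Poincaré--Wirtinger on $\T$ yields $\int_\T |f\circ\Phi|^2\,d\theta \le \int_\T |\partial_\theta(f\circ\Phi)|^2\,d\theta$; substituting $|\partial_\theta(f\circ\Phi)|\le \|T\,\nabla H\circ\Phi\|_{L^\infty(Q_\gamma)}|\nabla f\circ\Phi|$ and integrating in $h$ against the Jacobian $q'(h)T(h)$ yields \eqref{eq:ThirdPoincare}. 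The only delicate point in the whole argument is the Jacobian bookkeeping in (1): expanding the derivative of $|f\circ\Phi|^2 q'T$ in $h$ is exactly what generates the $[\log(q'T)]'$ factor, and once that is correctly tracked the rest of the proof is routine.
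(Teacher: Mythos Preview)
Your proof is correct but takes a different route from the paper. For parts (1) and (2), the paper introduces a smooth cutoff $\chi$ supported in the enlarged rectangle $Q_{\eta,K\gamma}$ and identically $1$ on $Q_{\eta,\gamma}$, defines the auxiliary function $F=\chi\,(f\circ\Phi)\sqrt{q'T}$, and applies the fundamental theorem of calculus starting from the edge of the support where $F$ vanishes; the derivative of the cutoff, of size $2/(K\gamma)$, is what produces the $8/K$ term after Cauchy--Schwarz and integration over the thin interval. You replace the cutoff by a pigeonhole argument, locating a strip $S_*$ carrying at most a $1/K$-fraction of the $L^2$ mass and anchoring the FTC at a mean-value point $h_*(\theta)\in S_*$. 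This is equally valid and in fact yields a sharper constant for the first term (you get $1/K$ rather than $8/K$), at the minor cost of a $\theta$-dependent base point whose measurability is irrelevant since only the resulting integral bound is integrated in $\theta$. For part (3) the paper argues by FTC from a vanishing point $\theta_0^h$ of $f\circ\Phi(\cdot,h)$, producing a bound of the form $\|f\|\|\nabla f\|\|T\nabla H\circ\Phi\|_{L^\infty}$ which is then divided through; your Poincar\'e--Wirtinger inequality on $\T$ is the Fourier-side version of the same fact and again gives the stated constant (indeed a better one). Both routes hinge on identical Jacobian bookkeeping, and the $[\log(q'T)]'$ term in (1) arises in exactly the same way from differentiating the weight $q'T$ in $h$.
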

The inequality \eqref{eq:FirstPoincare} quantitatively exploits the smallness in the streamwise (or angular) direction, as can be seen by the factor $\eta$ multiplying the second term in \eqref{eq:FirstPoincare}. Similarly, the inequality \eqref{eq:SecondPoincare} uses the thinness in the \textit{streakwise} (or action) direction. The first term in both inequalities containing the factor $1/N$ is related to the need of removing the average over a given direction in the domain, and in fact in \eqref{eq:ThirdPoincare} is not present. This will still be sufficient for our purposes because we can use $1/N$ as a smallness parameter to absorb that error with the $L^2$ norm in the full domain.

\begin{proof}
 Using the rescaled action-angle variables introduced in Section~\ref{sec:toolbox},  we have
 \[
  \| f \|_{L^2(\mathcal{Q}_{\eta, \gamma}(\overline{\theta}, \overline{h}))}^2 = \int_{Q_{\eta,\gamma}(\overline{\theta}, \overline{h})} |f \circ \Phi|^2(\theta, h) q^{\prime}(h) T(h) \, d\theta dh.
 \]
 \\
 \textbf{Proof of \eqref{eq:SecondPoincare}:}
  Let $\chi_r^K \in C^{\infty}(I)$ be a radial cut-off function such that
  \begin{itemize}
   \item $\chi_r^K(h) = 1$ if $|h-\overline{h}| < \gamma$;
   \item $\chi_r^K(h) = 0$ if $|h-\overline{h}| \geq K \gamma$;
   \item $\| \chi_r^K \|_{L^{\infty}(I)} \leq 1$ and $\| (\chi_r^K)^{\prime} \|_{L^{\infty}(I)} \leq \dfrac{2}{K \gamma}$.
  \end{itemize}
  We define an auxiliary function $F_r^K \colon \T \times I \to \R$ as
  \[
   F_r^K(\theta, h) = \chi_r^K(h) (f \circ \Phi)(\theta, h) \sqrt{q^{\prime}(h) T(h)}.
  \]
  Then
  \[
   \| f \|_{L^2(\mathcal{Q}_{\eta, \gamma}(\overline{\theta}, \overline{h}))}^2 = \int_{{Q}_{\eta, \gamma}(\overline{\theta}, \overline{h})} |F_r^K(\theta, h)|^2 \, d \theta dh = \int_{\overline{h} - \gamma}^{\overline{h} + \gamma} \int_{\overline{\theta} - \eta}^{\overline{\theta} + \eta} |F_r^K(\theta, h)|^2 \, d \theta dh.
  \]
  We observe that for all $h \in [\overline{h} - K \gamma, \overline{h} + K \gamma]$
  \[
   |F_r^K(\theta, h)|^2 = \int_{\overline{h} - K \gamma}^h \partial_h \left( |F_r^K(\theta, \tilde{h})|^2 \right) \, d \tilde{h} = 2 \int_{\overline{h} - K \gamma}^h F_r^K(\theta, \tilde{h})  \partial_h F_r^K(\theta, \tilde{h}) \, d \tilde{h}.  \]
  Integrating this over $[\overline{\theta} - \eta, \overline{\theta} + \eta]$ yields
  \begin{align}
  \begin{split}\label{eq:PoincareBigFIntegratedInTheta}
   \int_{\overline{\theta} - \eta}^{\overline{\theta} + \eta} |F_r^K(\theta, h)|^2 \, d \theta &= 2 \int_{\overline{\theta} - \eta}^{\overline{\theta} + \eta} \int_{\overline{h} - K \gamma}^h F_r^K(\theta, \tilde{h})  \partial_h F_r^K(\theta, \tilde{h}) \, d \tilde{h} \\
   &\leq 2 \| F_r^K \|_{L^2(Q_{\eta, K \gamma}(\overline{\theta}, \overline{h}))} \| \partial_h F_r^K \|_{L^2(Q_{\eta, K \gamma}(\overline{\theta}, \overline{h}))}
   \end{split}
  \end{align}
  where we used the Cauchy-Schwarz inequality together with the fact that $[\overline{\theta} - \eta, \overline{\theta} + \eta] \times [\overline{h} - K \gamma, h] \subseteq Q_{\eta, K \gamma}(\overline{\theta}, \overline{h})$ for all $h \in [\overline{h} - K \gamma, \overline{h} + K \gamma]$. Then, we notice that
  \begin{equation}\label{eq:PoincareBoundOnBigF}
   \| F_r^K \|_{L^2(Q_{\eta, K \gamma}(\overline{\theta}, \overline{h}))} \leq \| f \|_{L^2(\mathcal{Q}_{\eta, K \gamma}(\overline{\theta}, \overline{h}))}.
  \end{equation}
  Moreover,
  \begin{align*}
   \partial_h F_r^K(\theta, h) &= (\chi_r^K)^{\prime}(h) (f \circ \Phi)(\theta, h) \sqrt{q^{\prime}(h) T(h)} + \chi_r^K(h) (\nabla f \circ \Phi)(\theta, h) \partial_h \Phi(\theta, h) \sqrt{q^{\prime}(h) T(h)} \\
   &\qquad + \chi_r^K(h) (f \circ \Phi)(\theta, h) \partial_h(\sqrt{q^{\prime}(h) T(h)})
  \end{align*}
  Therefore,
  \begin{align*}
   \| \partial_h F_r^K \|_{L^2(Q_{\eta, K \gamma}(\overline{\theta}, \overline{h}))} &\leq \| (\chi_r^K)^{\prime} \|_{L^{\infty}(Q_{\eta, K \gamma}(\overline{\theta}, \overline{h}))} \| (f \circ \Phi) \sqrt{q^{\prime} T} \|_{L^{2}(Q_{\eta, K \gamma}(\overline{\theta}, \overline{h}))} \\
   &\quad + \| \chi_r^K \|_{L^{\infty}(Q_{\eta, K \gamma}(\overline{\theta}, \overline{h}))} \| (\nabla f \circ \Phi) \sqrt{q^{\prime} T} \|_{L^{2}(Q_{\eta, K \gamma}(\overline{\theta}, \overline{h}))}  \| \partial_h \Phi \|_{L^{\infty}(Q_{\eta, K \gamma}(\overline{\theta}, \overline{h}))} \\
   &\hspace{-2cm} + \dfrac{1}{2} \| \chi_r^K \|_{L^{\infty}(Q_{\eta, K \gamma}(\overline{\theta}, \overline{h}))} \| (f \circ \Phi) \sqrt{q^{\prime} T} \|_{L^{2}(Q_{\eta, K \gamma}(\overline{\theta}, \overline{h}))}  \left\| \dfrac{q^{\prime \prime} T + q^{\prime} T^{\prime}}{q^{\prime} T} \right\|_{L^{\infty}(Q_{\eta, K \gamma}(\overline{\theta}, \overline{h}))} \\
   &\leq \dfrac{2}{K \gamma} \| f \|_{L^{2}(\mathcal{Q}_{\eta, K \gamma}(\overline{\theta}, \overline{h}))} + \| \nabla f \|_{L^{2}(\mathcal{Q}_{\eta, K \gamma}(\overline{\theta}, \overline{h}))} \| \partial_h \Phi \|_{L^{\infty}(Q_{\eta, K \gamma}(\overline{\theta}, \overline{h}))} \\
   &\quad + \dfrac{1}{2} \| f \|_{L^{2}(\mathcal{Q}_{\eta, K \gamma}(\overline{\theta}, \overline{h}))}  \left\| [\log(q^{\prime} T)]^{\prime} \right\|_{L^{\infty}(Q_{\eta, K \gamma}(\overline{\theta}, \overline{h}))}
  \end{align*}
  Combining this last observation with \eqref{eq:PoincareBigFIntegratedInTheta} and \eqref{eq:PoincareBoundOnBigF}, we deduce that 
  \begin{align*}
   \int_{\overline{\theta} - \eta}^{\overline{\theta} + \eta} |F_r^K(\theta, h)|^2 \, d \theta &\leq \dfrac{4}{K \gamma} \| f \|_{L^{2}(\mathcal{Q}_{\eta, K \gamma}(\overline{\theta}, \overline{h}))}^2 + 2 \| f \|_{L^{2}(\mathcal{Q}_{\eta, K \gamma}(\overline{\theta}, \overline{h}))} \| \nabla f \|_{L^{2}(\mathcal{Q}_{\eta, K \gamma}(\overline{\theta}, \overline{h}))} \| \partial_h \Phi \|_{L^{\infty}(Q_{\eta, K \gamma}(\overline{\theta}, \overline{h}))} \\
   &\quad + \| f \|_{L^{2}(\mathcal{Q}_{\eta, K \gamma}(\overline{\theta}, \overline{h}))}^2  \left\| [\log(q^{\prime} T)]^{\prime} \right\|_{L^{\infty}(Q_{\eta, K \gamma}(\overline{\theta}, \overline{h}))}.
  \end{align*}
  Therefore,
  \begin{align*}
  \| f \|_{L^2(\mathcal{Q}_{\eta, \gamma}(\overline{\theta}, \overline{h}))}^2 &= \int_{\overline{h} - \gamma}^{\overline{h} + \gamma} \int_{\overline{\theta} - \eta}^{\overline{\theta} + \eta} |F_r^K(\theta, h)|^2 \, d \theta dh \\
  &\leq \dfrac{8}{K} \| f \|_{L^{2}(\mathcal{Q}_{\eta, K \gamma}(\overline{\theta}, \overline{h}))}^2 + 4 \gamma \| f \|_{L^{2}(\mathcal{Q}_{\eta, K \gamma}(\overline{\theta}, \overline{h}))} \| \nabla f \|_{L^{2}(\mathcal{Q}_{\eta, K \gamma}(\overline{\theta}, \overline{h}))} \| \partial_h \Phi \|_{L^{\infty}(Q_{\eta, K \gamma}(\overline{\theta}, \overline{h}))} \\
   &\quad + 2 \gamma \| f \|_{L^{2}(\mathcal{Q}_{\eta, K \gamma}(\overline{\theta}, \overline{h}))}^2  \left\| [\log(q^{\prime} T)]^{\prime} \right\|_{L^{\infty}(Q_{\eta, K \gamma}(\overline{\theta}, \overline{h}))}.
  \end{align*}
  Thanks to \eqref{eq:DerivativeWRTc}, the proof is finished.
  
  \medskip
 \textbf{Proof of \eqref{eq:FirstPoincare}:} The proof follows the exact same strategy as the previous point. The only difference is that one defines a cutoff in $\T$ instead of a radial one. 

\medskip

 \textbf{Proof of \eqref{eq:ThirdPoincare}: }
 Since $P_0 f = 0$, we have
 \[
  \int_{\T} (f \circ \Phi)(\theta, h) \, d \theta = 0 \quad \text{for a.e. $h \in I$.}
 \]
 Thus for a.e. $h \in I$ there exists $\theta_0^h \in \T$ such that $(f \circ \Phi)(\theta_0^h, h) = 0$. Hence, we have
 \begin{align*}
  |f \circ \Phi(\theta, h)|^2 &= \int_{\theta_0^h}^{\theta} \partial_{\theta}(|f \circ \Phi|^2)(\tilde{\theta}, h) \, d \tilde{\theta} \\
  &= 2\int_{\theta_0^h}^{\theta} \Big( (f \circ \Phi) \partial_{\theta} \Phi \cdot (\nabla f \circ \Phi) \Big) (\tilde{\theta}, h) \, d \tilde{\theta}. 
 \end{align*}
 From this identity, H\"older's inequality and \eqref{eq:DerivativeWRTTheta}, we deduce that
  \begin{align*}
  \int_{|h - \overline{h}| < \gamma} |(f \circ \Phi)(\theta, h)|^2 q^{\prime}(h) T(h) \, dh \leq \| f \|_{L^2(\mathcal{Q}_{1, \gamma}(\overline{\theta}, \overline{h}))} \| \nabla f \|_{L^2(\mathcal{Q}_{\gamma}(\overline{h}))} \| T (\nabla H \circ \Phi) \|_{L^{\infty}(Q_{  \gamma}( \overline{h}))}. 
 \end{align*}
 Integrating in the angular variable over $\TT$, we finally prove \eqref{eq:ThirdPoincare}.
\end{proof}

\section{The model problem as the leading order approximation for $\rho_\perp$}
\label{sec:proofmain}
In this section, we aim at showing that our model problem \eqref{eq:model}, namely, 
\begin{equation*}
\begin{cases}
\de_tg +u\cdot \nabla g=\nu P_\perp\Delta P_\perp g, \qquad x \in M, \, t> 0,\\
g|_{t=0}=\rho_{\perp}^{in} \qquad g|_{\partial M}=0, \text{ or }\partial_n g|_{\partial M}=0,
\end{cases} 
\end{equation*}
can be thought as the leading order approximation describing the dynamics of $\rho_\perp$, which solves \eqref{eq:PperpP0}.  In this section we assume the result in Theorem \ref{th:main2}, namely we need the bound \eqref{bd:gmain} on $g$. With this bound at hand, we first prove first prove Theorem \ref{th:main} and then Corollary \ref{cormain2} and Proposition \ref{prop:conditional}. 

\subsection{Proof of Theorem \ref{th:main}}
First of all, we observe that since $\nabla \cdot u=0$ and we have Dirichlet or Neumann boundary conditions on $\rho$, a standard energy estimate for \eqref{eq:advdiff} give us
\begin{equation}
\frac12\frac{\dd }{\dd t}\norm{\rho}^2_{L^2}=-\nu \norm{\nabla \rho}^2_{L^2}\leq -\frac{\nu}{C_P} \norm{\rho}^2,
\end{equation}
where we used the Poincaré inequality (which holds true since we have $\int_M\rho^{in}=0$ and the domain $M$ is bounded). With the notation in Section \ref{sec:Fourier}, from the inequality above we get
\begin{equation}
\label{bd:heatdecay}
\norm{\rho(t)}_{L^2}^2=\norm{\rho_0(t)}^2_{L^2}+\norm{\rho_\perp(t)}^2_{L^2}\leq \e^{-2(\nu/C_P ) t}\norm{\rho^{in}}_{L^2}^2.
\end{equation}
When $\rho_{0}^{in}\neq 0$, the bound above readily implies the decay on time-scales $O(\nu^{-1})$ for $\rho_0$ stated in \eqref{bd:rho0main}. Notice that when $\rho_0^{in}=0$ we have to gain an extra factor of $\eps$, that does not follow by the standard energy estimate above.

To improve the bounds on $\rho_\perp$ (and $\rho_0$ when $\rho_0^{in}=0$), we use that $H$ belongs to the class \ref{HamiltonianClassPEps-m} to perform an asymptotic expansion in $\eps$. We recall that $\eps$ is the smallness parameter quantifying how far we are from being ``radial", more precisely we need the hypotheses \ref{item:ClassPCond4} and \ref{item:ClassPCond5} in Definition \ref{def:ClassHamiltoniansP}. We want to construct solutions to \eqref{eq:PperpP0} as  
\begin{equation}
\label{eq:expansion}
\rho_\perp=:g+\sum_{n=1}^{+\infty}\eps^{n}\rho^{(n)}_{\perp}=:g+g_{\mathrm{corr}}, \qquad \rho_0=\eta+\sum_{n=1}^{+\infty}\eps^{n}\rho^{(n)}_{0},
\end{equation}
where $g$ solves \eqref{eq:model}, $\eta$ is the solution to \eqref{eq:model00}. We define the approximations as follows\footnote{without specifying the domain and boundary conditions, which are the same ones of $g$}: denote $\rho_\perp^{(0)}:=g$ and $\rho_0^{(0)}=\eta$. 
\begin{itemize}
\item  For $n\geq 1$, we define $\rho^{(n)}_\perp$ to be the solution of 
\begin{equation}
\label{pb:rhoperpn}
\begin{cases}
\displaystyle \de_t\rho^{(n)}_\perp +u\cdot\nabla \rho^{(n)}_\perp=\nu P_\perp\Delta \rho^{(n)}_\perp+\frac{\nu}{\eps}P_\perp\Delta  \rho^{(n-1)}_0,\\
\rho^{(n)}_\perp|_{t=0}=0.
\end{cases} 
\end{equation}
\item For $n\geq 1$ we define $ \rho^{(n)}_0$ to be the solution of
\begin{equation}
\label{pb:rho0n}
 \begin{cases}
\displaystyle \de_t\rho^{(n)}_0 =\nu P_0\Delta  \rho^{(n)}_0+\frac{\nu}{\eps}P_0\Delta  \rho^{(n-1)}_\perp,\\
\rho^{(n)}_0|_{t=0}=0,
\end{cases}
\end{equation}
\end{itemize}
With the definitions above, if the series in \eqref{eq:expansion} is well defined, it is not difficult to see that indeed $\rho_\perp,\rho_0$ solve \eqref{eq:PperpP0}. 
In particular, we aim at proving that  $\rho^{(n)}_{\iota}$, with $\iota\in\{\perp,0\}$, remain sufficiently small in a suitable functional space. This would imply the desired convergence of the series and smallness of $g_{\mathrm{corr}}$. Hence, the decay on the time-scale $O(\lambda_\nu)$ in \eqref{bd:rhoperpmain} is coming from $g$ and Theorem \ref{th:main2}, whereas the other error is related to the bounds on $\rho^{(n)}_\iota$. Moreover, we also need to show that $g_{\mathrm{corr}}$ decays at least on $O(\nu^{-1})$ time-scales, but this will be a simple consequence of the Poincaré inequality.

To obtain the desired bounds on $\rho_\iota^{(n)}$ we heavily rely on the assumptions in Definition \ref{def:ClassHamiltoniansP}. We are able to prove the following.
\begin{proposition}
\label{prop:expansion}
Let $\rho_{\perp}^{(n)},\rho_0^{(n)}$ be the solutions of \eqref{pb:rhoperpn} and \eqref{pb:rho0n} respectively. Assume that the Hamiltonian $H \colon \overline{M} \to \R$ belongs to the class \ref{HamiltonianClassPEps-m} with parameter $\eps \in (0,1)$ and $m \geq 1$. Then, for all $n\geq 0$ the following bounds holds true
\begin{equation}
\label{bd:proprhon}
\|\rho_{\iota}^{(n)}(t)\|_{L^2}+\left(\frac{\nu}{2}\int_{0}^t\|\nabla\rho_{\iota}^{(n)}(\tau)\|_{L^2}^2\dd \tau\right)^{\frac12}\leq \e^{-c_0\nu t}\left( \sqrt{\dfrac{ 8}{1 - \eps}} \right)^{n} \norm{\rho^{in}}_{L^2}, \quad \text{for } \iota\in\{\perp,0\},
\end{equation}
where $c_0=1/(2C_P)$ with $C_P$ being the Poincaré constant of the domain $M$.

\noindent The series in \eqref{eq:expansion} are well defined in the $L^\infty_t L^2_x\cap L^\infty_t H^1_x$ sense whenever $\eps\sqrt{8/(1-\eps)}<1$.

\noindent Moreover, if $\|\rho_0^{in}\|_{L^2}=0$ then $\|\rho_0^{(2n)}\|_{L^2}=\|\rho_\perp^{(2n+1)}\|_{L^2}=0$ for all $n\geq0$.
\end{proposition}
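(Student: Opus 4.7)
My plan is to proceed by induction on $n$, propagating an exponentially weighted energy bound of the form
\begin{equation*}
\e^{2c_0\nu t}\|\rho^{(n)}_\iota(t)\|_{L^2}^2 + \frac{\nu}{2}\int_0^t \e^{2c_0\nu\tau}\|\nabla\rho^{(n)}_\iota(\tau)\|_{L^2}^2\,\dd\tau \leq \frac{8^n}{(1-\eps)^n}\|\rho^{in}\|_{L^2}^2
\end{equation*}
for both $\iota \in \{\perp, 0\}$, from which the stated bound then follows. The base case $n=0$ comes from the standard energy identity for $g$ and $\eta$ solving the model problems \eqref{eq:model} and \eqref{eq:model00}: the transport term vanishes by incompressibility, the diffusion gives exactly $-\nu\|\nabla g\|^2$ (using $P_\perp g = g$), and the Poincar\'e inequality $\|g\|^2\leq C_P\|\nabla g\|^2$ — valid under either boundary condition since $\int_M\rho^{in}=0$ — yields the required bound with $c_0=1/(2C_P)$.

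For the inductive step, I would test \eqref{pb:rhoperpn} against $\rho_\perp^{(n)}$ and integrate by parts: since $P_\perp\rho_\perp^{(n)}=\rho_\perp^{(n)}$ and the boundary contribution drops (Dirichlet: $\rho_\perp^{(n)}|_{\partial M}=0$; Neumann: $\partial_n\rho_0^{(n-1)}|_{\partial M}=0$), the forcing collapses into $-(\nu/\eps)\int_M \nabla\rho_0^{(n-1)}\cdot\nabla\rho_\perp^{(n)}\,\dd x$. The crux — and what I expect to be the main obstacle — is the commutator-type inequality
\begin{equation*}
 \left|\int_M \nabla\rho_0\cdot\nabla\rho_\perp\,\dd x\right| \leq \frac{\eps}{\sqrt{1-\eps}}\|\nabla\rho_0\|_{L^2}\|\nabla\rho_\perp\|_{L^2}
\end{equation*}
for any $\rho_0\in\mathrm{Ker}(u\cdot\nabla)$ and $\rho_\perp\in L^2_\perp(M)$. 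To prove it I would pull back to the rescaled action-angle variables of Section~\ref{sec:rescaled-action-angle}: since $\rho_0\circ\Phi$ is $\theta$-independent, $\nabla\rho_0 = (\tilde\rho_0'(h)/q'(h))\,\nabla H$ by \eqref{eq:JacobianLemmaThree}. Decomposing $\nabla\rho_\perp = (\partial_\theta\tilde\rho_\perp)\nabla\InverseMap_1 + (\partial_h\tilde\rho_\perp)\nabla\InverseMap_2$ via \eqref{eq:JacobianLemmaTwo}--\eqref{eq:JacobianLemmaThree}, the angular piece $\nabla H\cdot\nabla\InverseMap_1$ is $O(\eps)$ directly by the transversality hypothesis \ref{item:ClassPCond5}, while the radial piece is governed by $|\nabla H|^2/q'$; writing $|\nabla H|^2 = \langle|\nabla H|^2\rangle + (|\nabla H|^2-\langle|\nabla H|^2\rangle)$, the streamline-averaged term integrates to zero since $\int_\T\partial_h\tilde\rho_\perp\,\dd\theta = \partial_h\int_\T\tilde\rho_\perp\,\dd\theta = 0$, and the remainder is $O(\eps)$ by the uniformity hypothesis \ref{item:ClassPCond4}. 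A Cauchy--Schwarz step, together with the near-orthogonality $|\nabla\InverseMap_1\cdot\nabla\InverseMap_2|\lesssim \eps|\nabla\InverseMap_1||\nabla\InverseMap_2|$ — which gives $|\nabla\rho_\perp|^2 \geq (1-\eps)\bigl(|\partial_\theta\tilde\rho_\perp|^2|\nabla\InverseMap_1|^2 + |\partial_h\tilde\rho_\perp|^2|\nabla\InverseMap_2|^2\bigr)$ — identifies the resulting integrals with the ambient $L^2$ norms and produces the $(1-\eps)^{-1/2}$ factor.

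With the key inequality in hand, Young's inequality splits the forcing as $(\nu/2)\|\nabla\rho_\perp^{(n)}\|^2 + (2\nu/(1-\eps))\|\nabla\rho_0^{(n-1)}\|^2$; absorbing the first term into the dissipation and using $2c_0\nu\|\rho_\perp^{(n)}\|^2\leq \nu\|\nabla\rho_\perp^{(n)}\|^2$ (Poincar\'e with $c_0 = 1/(2C_P)$), one obtains after multiplication by $\e^{2c_0\nu t}$
\begin{equation*}
 \frac{d}{dt}\!\left(\e^{2c_0\nu t}\|\rho_\perp^{(n)}\|^2\right) + \frac{\nu}{2}\e^{2c_0\nu t}\|\nabla\rho_\perp^{(n)}\|^2 \leq \frac{2\nu}{1-\eps}\e^{2c_0\nu t}\|\nabla\rho_0^{(n-1)}\|^2.
\end{equation*}
Integrating in time and invoking the inductive bound on the weighted integral of $\|\nabla\rho_0^{(n-1)}\|^2$ yields the factor $8/(1-\eps)$ (the $8 = 4\cdot 2$ coming from the Young constant and the inductive $2/\nu$ factor). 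The symmetric computation with $P_0\leftrightarrow P_\perp$ handles $\rho_0^{(n)}$. Convergence of the series \eqref{eq:expansion} in $L^\infty_t(L^2_x\cap H^1_x)$ then follows from the ratio test as soon as $\eps\sqrt{8/(1-\eps)}<1$. Finally, for $\rho_0^{in}=0$ the base observation $\eta\equiv 0$ propagates trivially: it kills the forcing of $\rho_\perp^{(1)}$, hence $\rho_\perp^{(1)}\equiv 0$, which in turn kills the forcing of $\rho_0^{(2)}$, and a one-line induction gives $\rho_0^{(2n)}=\rho_\perp^{(2n+1)}=0$ for all $n\geq 0$.
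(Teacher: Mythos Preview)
Your proposal is correct and follows essentially the same route as the paper: weighted energy estimates (the paper works with $\tilde\rho^{(n)}_\iota=\e^{c_0\nu t}\rho^{(n)}_\iota$, which is equivalent to your multiplication by $\e^{2c_0\nu t}$), the same commutator bound $|\langle\nabla f_0,\nabla b_\perp\rangle|\lesssim \eps(1-\eps)^{-1/2}\|\nabla f_0\|\|\nabla b_\perp\|$ proved via the rescaled action--angle variables using \ref{item:ClassPCond4}, \ref{item:ClassPCond5} and the vanishing of $\int_\T\partial_h B_\perp\,d\theta$, then Young and induction. The only discrepancy is a stray $\sqrt{2}$ in your commutator constant (the paper obtains $\eps\sqrt{2/(1-\eps)}$ because of the step $|a|+|b|\leq\sqrt{2(a^2+b^2)}$), which is exactly what makes the inductive factor come out as $8/(1-\eps)$ rather than $4/(1-\eps)$; your ``$8=4\cdot2$'' accounting is slightly off but the structure is right.
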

Before proving this proposition, let us show that Theorem \ref{th:main} is a direct consequence of Proposition \ref{prop:expansion} and Theorem \ref{th:main2}.
\begin{proof}[Proof of Theorem \ref{th:main}]
By \eqref{eq:expansion}, we know that $\rho_\perp-g_{\mathrm{corr}}=g$ where $g$ solves \eqref{eq:model}. Therefore, since the class \ref{HamiltonianClassPEps-m} is contained in the class $(\mathrm{A}^m)$, we can apply Theorem \ref{th:main2} and readily deduce the bound \eqref{bd:rhoperpmain}. 

To prove \eqref{bd:gcorr}, we directly apply the bound \eqref{bd:proprhon} with $n= 1,2\dots$ and $\iota=\perp$. Similarly, when $\rho_0^{in}\neq 0$ we apply the bound \eqref{bd:proprhon} with $n=0,1,\dots$ and prove \eqref{bd:rho0main}. In the case $\rho_0^{in}=0$, we know that $\rho_0^{(0)}=\rho_\perp^{(1)}=0$ and therefore the series for $g_{\mathrm{corr}}$ starts at $n=2$ whereas the series for $\rho_0$ starts at $n=1$, thus giving us the extra factor $\eps$ claimed in Theorem \ref{th:main}.
\end{proof}
It thus remain to prove Proposition \ref{prop:expansion}.
\begin{proof}[Proof of Proposition \ref{prop:expansion}] 
To control $g_{\mathrm{corr}}$ and $\rho_0$ we rely on standard energy estimates for the equations \eqref{eq:model}, \eqref{pb:rhoperpn} and \eqref{pb:rho0n} where a quantitative bound on the commutator $[P_0,\nabla]$ is needed to absorb the factor $\eps^{-1}$ in the forcing terms in \eqref{pb:rhoperpn} and \eqref{pb:rho0n}. 
To propagate the exponential decay on the time scale $O(\nu^{-1})$, we define 
\begin{equation}
\label{def:tilderho}
\tilde{\rho}^{(n)}_\iota= \e^{c_0\nu t} \rho^{(n)}_\iota, \qquad c_0=\frac{1}{2C_p}, \qquad \iota\in \{0,\perp\}, \qquad n=0,1,\dots.
\end{equation}
Notice that $\tilde{\rho}^{(n)}_\perp, \tilde{\rho}^{(n)}_0,$ satisfy
\begin{align}
&\de_t \tilde{\rho}^{(n)}_\perp+u\cdot \nabla \tilde{\rho}^{(n)}_\perp=\nu P_\perp \Delta \tilde{\rho}^{(n)}_\perp+\frac{\nu}{\eps}P_\perp\Delta \tilde{\rho}_{0}^{(n-1)}+c_0\nu \tilde{\rho}^{(n)}_\perp,\\
&\de_t \tilde{\rho}^{(n)}_0=\nu P_0 \Delta \tilde{\rho}^{(n)}_0+\frac{\nu}{\eps}P_0\Delta \tilde{\rho}_{\perp}^{(n-1)}+c_0\nu \tilde{\rho}^{(n)}_0,
\end{align}
with the same initial and boundary conditions of $\rho^{(n)}_\iota$ for $\iota \in \{0,\perp\}$.

We first deal with the case where $\|\rho_0^{in}\|_{L^2}\neq0$. For $n = 0$, recalling that $\e^{c_0\nu t} g=\tilde{\rho}^{(0)}_\perp$, standard energy estimates yield
\begin{align}
&\frac{\dd }{\dd t}\|{\tilde{\rho}^{(0)}_\perp}\|_{L^2}^2+2\nu \|{\nabla \tilde{\rho}^{(0)}_\perp}\|^2_{L^2}=2c_0\nu \|{\tilde{\rho}^{(0)}_\perp}\|_{L^2}^2\leq \nu\|{\nabla\tilde{\rho}^{(0)}_\perp}\|_{L^2}^2,\\
&\frac{\dd }{\dd t}\|{\tilde{\rho}^{(0)}_0}\|_{L^2}^2+2\nu \|{\nabla \tilde{\rho}^{(0)}_0}\|^2_{L^2}=2c_0\nu \|{\tilde{\rho}^{(0)}_0 }\|_{L^2}^2\leq \nu\|{\nabla\tilde{\rho}^{(0)}_0}\|_{L^2}^2.
\end{align}
where in the last line we used  the Poincaré inequality and $c_0=1/(2C_P)$. Integrating in time, we obtain
\begin{align}
\label{eq:eng}
&\|{\tilde{\rho}^{(0)}_\perp(t)}\|_{L^2}^2+\nu \int_0^t \|{\nabla\tilde{\rho}^{(0)}_\perp(\tau)}\|^2_{L^2}\dd \tau\leq \|{\rho_\perp^{in}}\|_{L^2}^2,\\
\label{eq:enrho0}
&\|{\tilde{\rho}_0^{(0)}(t)}\|_{L^2}^2+\nu \int_0^t \|{\nabla\tilde{\rho}_0^{(0)}(\tau)}\|^2_{L^2}\dd \tau\leq \|{\rho_0^{in}}\|_{L^2}^2.
\end{align}
For $n\geq 1$, since $\tilde{\rho}^{(n)}_\iota|_{t=0}=0$ for $\iota \in \{0,\perp\}$, performing analogous energy estimates we have
\begin{align}
\label{eq:enrhoperpn}
\|{\tilde{\rho}_\perp^{(n)}(t)}\|_{L^2}^2+\nu \int_0^t \|{\nabla \tilde{\rho}_\perp^{(n)}(\tau)}\|^2_{L^2}\dd \tau\leq-2\frac{\nu}{\eps}\int_0^t\jap{\nabla \tilde{\rho}_0^{(n-1)},\nabla \tilde{\rho}_\perp^{(n)}}_{L^2}(\tau)\dd \tau.
\end{align}
and
\begin{align}
\label{eq:enrho0n}
\|{\tilde{\rho}_0^{(n)}(t)}\|_{L^2}^2+\nu \int_0^t \|{\nabla \tilde{\rho}_0^{(n)}(\tau)}\|^2_{L^2}\dd \tau\leq-2\frac{\nu}{\eps}\int_0^t\jap{\nabla \tilde{\rho}_\perp^{(n-1)},\nabla \tilde{\rho}_0^{(n)}}_{L^2}(\tau)\dd \tau.
\end{align}
Notice that, in both energy inequalities \eqref{eq:enrhoperpn} and \eqref{eq:enrho0n}, the commutator between $P_0$ and $\nabla$ is involved. Indeed, if $[P_0,\nabla]=0$ the right-hand side in \eqref{eq:enrhoperpn} and \eqref{eq:enrho0n} vanish. To extract information from the commutator, we need to control  terms of the form
\begin{equation}
\mathcal{J}[f_0,b_\perp]:=\jap{\nabla f_0,\nabla b_\perp}_{L^2(M)} = \int_I \int_{\T} ((\nabla f_0 \circ \Phi) (\nabla b_{\perp} \circ \Phi))(\theta,h) q^{\prime}(h)T(h) \, d \theta d h,
\end{equation}
for two given functions $f,b\in H^1(M)$ and where $(\theta,h)$ and $\Phi$ are the variables and the map defining the change of coordinates in Section \ref{sec:rescaled-action-angle}.  We denote 
\begin{equation}
F_0( h):=(f_0\circ \Phi)( h), \qquad B_\perp(\theta,h):=(b_\perp \circ \Phi)(\theta,h).
\end{equation}
Note that $F_0$ does not depend on $\theta$ since $f_0$ is a streamline-average. We recall that $\Psi:=\Phi^{-1}$ and in particular $\Psi_1(x)=h(x)$ and $\Psi_2(x)=\theta(x)$, but we do not use $h,\theta$ to avoid confusion.
With the notation from Subsection~\ref{sec:rescaled-action-angle} , we have the following identities
\begin{align}
&f_0(x)=(F_0 \circ \Psi)(x) = F_0(\Psi_2(x)); \\
&b_\perp(x)=(B_\perp \circ \Psi)(x); \\
\label{eq:nablaf0}&\nabla f_0= (\de_h F_0 \circ \Psi ) \nabla \Psi_2; \\
\label{eq:nablabperp}& \nabla b_\perp= (\de_\theta B_\perp \circ \Psi)\nabla \Psi_1 + (\de_h B_\perp \circ \Psi )\nabla \Psi_2.
\end{align}
Therefore, we have
\begin{align*}
\mathcal{J}[f_0,b_\perp] &= \int_I \int_{\T}\left (\left(\left( (\nabla \Psi_1 \cdot \nabla \Psi_2) \circ \Phi \right) \partial_{\theta} B_{\perp} \partial_h F_0+ \left( |\nabla \Psi_2 \circ \Phi|^2 \right)\partial_{h} B_{\perp} \partial_h F_0\right) q' T\right)(\theta,h) \, d \theta dh 
\end{align*}
Since 
\[
 \int_{\T} B_{\perp}(\theta, h) \, d \theta = 0 \text{ for a.e. $h \in I$,}
\]
we observe that for any function $\Gamma_0 \colon I \to \R$
\begin{align}
 \int_I \int_{\T} \Gamma_0(h)(\partial_{h} B_{\perp} \partial_h F_0)(\theta, h) \, d \theta dh = 0. \label{eq:StreamlineAverageIntegratedZero2}
\end{align}
Thus, to obtain bounds for $\mathcal{J}[f_0,b_\perp]$, we can remove the streamline average of the coefficients related to the change of coordinates if needed. In particular, thanks the assumptions \ref{item:ClassPCond4}-\ref{item:ClassPCond5} in Definition \ref{def:ClassHamiltoniansP}, it will be enough to control
\begin{equation}
\mathcal{C}_1=|\nabla \Psi_2 \circ \Phi|^2-\langle|\nabla \Psi_2 \circ \Phi|^2\rangle, \qquad \mathcal{C}_2=(\nabla \Psi_1 \cdot \nabla \Psi_2) \circ \Phi.
\end{equation}
In fact, as we mention in Remark \ref{rem:deBperp}, one could slightly improve the bounds for the term involving $\de_\theta B_{\perp}$.
In account of \eqref{eq:JacobianLemmaThree}, by using \ref{item:ClassPCond4} we have
\begin{align*}
 |\mathcal{C}_1|&=  \big| |\nabla \Psi_2  \circ \Phi|^2 - \int_{\T}  |\nabla \Psi_2  \circ \Phi|^2\, d \theta \big|= \dfrac{\big||\nabla H (\Phi(\theta,h))|^2 - \int_{\T} |\nabla H (\Phi(\theta,h))|^2 \, d \theta\big|}{q^{\prime}(h)^2} \\
 &\leq \dfrac{\eps |\nabla H (\Phi(\theta, h))|^2}{q^{\prime}(h)^2} = \eps |\nabla \Psi_2(\Phi(\theta, h))|^2.
\end{align*}
Exploiting now \eqref{eq:JacobianLemmaTwo} and \ref{item:ClassPCond5}, we obtain
\begin{equation}
|\mathcal{C}_2|\leq \frac{1}{(q' T)^2}|\de_h\Phi\cdot \de_\theta \Phi|\leq  \eps|\nabla \Psi_1(\Phi(\theta, h))||\nabla \Psi_2(\Phi(\theta, h))|.
\end{equation}
Using the last two bounds above, we get 
\begin{align}
\label{bd:I0perp1}
|\mathcal{J}[f_0,b_\perp]|\leq \eps  \int_I \int_{\T} \left(\big(|(\nabla \Psi_1 \circ \Phi)\partial_{\theta} B_{\perp}|+|(\nabla \Psi_2 \circ \Phi)\partial_{h} B_{\perp}|\big)| (\nabla \Psi_2\circ\Phi)  \partial_h F_0| q' T\right)(\theta,h) \, d \theta dh. 
\end{align}
Moreover, in view of \ref{item:ClassPCond5}, notice that 
\begin{align*}
&|(\nabla \Psi_1 \circ \Phi)\partial_{\theta} B_{\perp}+(\nabla \Psi_2 \circ \Phi)\partial_{h} B_{\perp}|^2\\
&\geq |(\nabla \Psi_1 \circ \Phi)\partial_{\theta} B_{\perp}|^2+|(\nabla \Psi_2 \circ \Phi)\partial_{h} B_{\perp}|^2-2|(\nabla \Psi_1 \circ \Phi)\cdot(\nabla \Psi_2 \circ \Phi)||\partial_{\theta} B_{\perp}\partial_{h} B_{\perp}|\\
&\geq (1-\eps)(|(\nabla \Psi_1 \circ \Phi)\partial_{\theta} B_{\perp}|^2+|(\nabla \Psi_2 \circ \Phi)\partial_{h} B_{\perp}|^2).
\end{align*}
Consequently, by using the simple inequality $|a|+|b|\leq \sqrt{2(a^2+b^2)}$ and the identity \eqref{eq:nablabperp}, we deduce that
\begin{align}
|(\nabla \Psi_1 \circ \Phi)\partial_{\theta} B_{\perp}|+|(\nabla \Psi_2 \circ \Phi)\partial_{h} B_{\perp}|\leq\sqrt{\frac{2}{1-\eps}}|\nabla b_\perp \circ \Phi|.
\end{align}
Using the inequality above in the bound \eqref{bd:I0perp1}, and appealing to  \eqref{eq:nablaf0}, we have
\begin{align}
\label{bd:Jfobperp}
|\mathcal{J}[f_0,b_\perp]|&\leq \eps\sqrt{\frac{2}{1-\eps}}\int_I \int_{\T}(|\nabla b_\perp \circ \Phi||\nabla f_0\circ \Phi|q'T)(\theta,h)\dd \theta\dd h\\
&\leq \eps\sqrt{\frac{2}{1-\eps}} \norm{\nabla b_\perp}_{L^2}\norm{\nabla f_0}_{L^2},
\end{align}
where in the last inequality we also changed variables to go back to standard Cartesian coordinates.
We now apply the bound \eqref{bd:Jfobperp} with $f_0=\tilde{\rho}_0^{(j)}$ and $b_\perp=\tilde{\rho}_\perp^{(j)}$ with $j\in \{n,n-1\}$ to the energy estimates \eqref{eq:enrhoperpn} and \eqref{eq:enrho0n}.   For \eqref{eq:enrhoperpn}, this yields
\begin{align}
&\|{\tilde{\rho}_\perp^{(n)}(t)}\|_{L^2}^2+\nu \int_0^t \|{\nabla \tilde{\rho}_\perp^{(n)}(\tau)}\|^2_{L^2}\dd \tau\leq2\nu \sqrt{\frac{2}{1-\eps}}  \int_0^t\|{\nabla \tilde{\rho}_0^{(n-1)}(\tau)}\|_{L^2}\|{\nabla \tilde{\rho}_\perp^{(n)}(\tau)}\|_{L^2}\dd \tau.
\end{align}
Applying  Young's inequality, we get
\begin{align}
&\|{\tilde{\rho}_\perp^{(n)}(t)}\|_{L^2}^2+\frac{\nu}{2} \int_0^t \|{\nabla \tilde{\rho}_\perp^{(n)}(\tau)}\|^2_{L^2}\dd \tau \leq  \frac{8}{1-\eps} \left( \frac{\nu}{2} \int_0^t\|{\nabla \tilde{\rho}_0^{(n-1)}(\tau)}\|_{L^2}^2\dd\tau\right).
\end{align}
Proceeding analogously in \eqref{eq:enrho0n}, we find that for  $\tilde{\rho}_0^{(n)}$ we have exactly the same inequality above by exchanging each occurence of $\rho_\perp$ with $\rho_0$ and viceversa.

In the case $n=1$, thanks to \eqref{eq:eng} and \eqref{eq:enrho0}, these estimates reduce to
\begin{align}
\label{bd:rhoperp1}&\|{\tilde{\rho}_\perp^{(1)}(t)}\|_{L^2}^2+\frac{\nu}{2} \int_0^t \|{\nabla \tilde{\rho}_\perp^{(1)}(\tau)}\|^2_{L^2}\dd \tau\leq  \frac{4}{1-\eps} \|{\rho_0^{in}}\|^2_{L^2}\leq \frac{8}{1-\eps}\left(\frac12\norm{\rho^{in}}^2_{L^2}\right),\\
&\|{\tilde{\rho}_0^{(1)}(t)}\|_{L^2}^2+\frac{\nu}{2} \int_0^t \|{\nabla \tilde{\rho}_0^{(1)}(\tau)}\|^2_{L^2}\dd \tau\leq \frac{4}{1-\eps}\|{\rho_\perp^{in}}\|^2_{L^2}\leq\frac{8}{1-\eps}\left(\frac12\|{\rho^{in}}\|^2_{L^2}\right).
\end{align}
Therefore, proceeding by induction we deduce that 
\begin{align}
&\|{\tilde{\rho}_\iota^{(n)}(t)}\|_{L^2}^2+\frac{\nu}{2} \int_0^t \|{\nabla \tilde{\rho}_\iota^{(n)}(\tau)}\|^2_{L^2}\dd \tau \leq  \left( \dfrac{8}{1 - \eps} \right)^{n} \frac12\norm{\rho^{in}}^2_{L^2}, \qquad \iota\in \{0,\perp\},
\end{align}
thus proving the desired bound \eqref{bd:proprhon} after recalling the definition in \eqref{def:tilderho}.

We now turn our attention to the case $\|\rho_0^{in}\|_{L^2}=0$. Thanks to \eqref{eq:enrho0} we deduce that $\|\tilde{\rho}_0^{(0)}\|_{L^2}=0$. Moreover, thanks to the first inequality in \eqref{bd:rhoperp1}, we also have $\|\tilde{\rho}_{\perp}^{(1)}\|_{L^2}=0$. Iterating this reasoning, it is not hard to conclude that $\|\tilde{\rho}_0^{(2n)}\|_{L^2}=\|\tilde{\rho}_\perp^{(2n+1)}\|_{L^2}=0$ for all $n\geq0$ thus concluding the proof of the proposition.
\end{proof}
\begin{remark}\label{rem:deBperp}
In the definition of $\mathcal{J}[f_0,b_\perp]$, notice that we have the term 
\begin{align}
&\int_I\int_{\mathbb{T}}(((\nabla \Psi_1 \cdot \nabla \Psi_2)\circ \Phi)\de_\theta B_\perp \de_h F_0)(q'T)(\theta,h)\dd \theta \dd h\\
&=-\int_I\int_{\mathbb{T}}\left((\de_\theta((\nabla \Psi_1 \cdot \nabla \Psi_2)\circ \Phi))) (B_\perp \de_h F_0)q'T\right)(\theta,h)\dd \theta \dd h,
\end{align}
where in the last identity we integrated by parts in $\theta$ and used that $\de_\theta F_0=0$. With this identity, it would be enough to assume 
\begin{equation}
|\de_\theta((\nabla \Psi_1 \cdot \nabla \Psi_2)\circ \Phi)) |\leq c |\nabla \Psi_2\circ \Phi|,
\end{equation}
where $c$ need not to be small in principle. Indeed, we would not need to use a full gradient on $\rho^{(j)}_\perp$ when doing the energy estimates, and this would imply getting worst universal constants in the final estimates. However, when $c>\eps$ we need to assume $\eps$ to be small compared to $c$, and therefore it does not seem to be a significant improvement.
\end{remark}

\subsection{Proof of Corollary \ref{cormain2}}
The proof of this corollary is based on a standard comparison argument as done, for instance, in \cite{CZDE20}. Indeed,  let $T_\nu=C_* \nu^{-\frac{m}{m+2}}$ with $C_*$ to be specified later. Then, let $\tilde{c}_*\in (0,1)$ be a fixed constant. If
\begin{equation}
2\nu \int_0^{T_\nu}\norm{\nabla \rho(\tau)}^2_{L^2}\dd \tau\geq \tilde{c}_*\|{\rho^{in}_\perp}\|^2_{L^2}, 
\end{equation}
by the standard energy estimate on \eqref{eq:advdiff}, since $\rho^{in}=\rho^{in}_\perp$, we get
\begin{align}
\norm{\rho(T_\nu)}^2_{L^2}=\|{\rho^{in}_\perp}\|^2_{L^2}-2\nu \int_0^{T_\nu}\norm{\nabla \rho(\tau)}^2_{L^2}\dd \tau\leq (1-\tilde{c}_*)\|{\rho^{in}_\perp}\|^2_{L^2}.
\end{align}
By the properties of $P_0$ , we have 
\begin{equation}
\|\rho_\perp(T_\nu)\|^2_{L^2}\leq\norm{\rho(T_\nu)}^2_{L^2}\leq (1-\tilde{c}_*)\|{\rho^{in}_\perp}\|^2_{L^2},
\end{equation}
thus proving the desired result in this case. On the other hand, consider now the case 
\begin{equation}
\label{bd:hypcstar}
2\nu \int_0^{T_\nu}\norm{\nabla \rho(\tau)}^2_{L^2}\dd \tau< \tilde{c}_*\|{\rho^{in}_\perp}\|^2_{L^2}. 
\end{equation}
Let $g$ be the solution to \eqref{eq:model}. Then, by the linearity of the equations considered, we notice that $\rho_\perp-g$ solves 
\begin{equation}
\de_t(\rho_\perp-g)+u\cdot \nabla(\rho_\perp-g)=\nu P_\perp\Delta (\rho_\perp-g) +\nu P_\perp\Delta  P_0\rho,
\end{equation}
with zero initial data. Therefore, multiplying by $\rho_\perp-g$ and integrating in space, we deduce that 
\begin{equation}
\frac12 \frac{\dd }{\dd t}\|\rho_\perp-g\|^2_{L^2}+\nu \|\nabla (\rho_\perp-g)\|^2_{L^2}=-\nu \jap{\nabla (P_0\rho), \nabla (\rho_\perp-g)}_{L^2}.
\end{equation}
Applying the Cauchy-Schwarz and Young's inequality, and integrating in time, we get
\begin{equation}
\label{bd:energy}
\|(\rho_\perp-g)(T_\nu)\|^2_{L^2}+\nu \int_0^{T_\nu}\|\nabla (\rho_\perp-g)(\tau)\|^2_{L^2}\dd \tau\leq \nu \int_0^{T_\nu}\|\nabla (P_0 \rho)(\tau)\|^2_{L^2}\dd \tau.
\end{equation}
Applying Lemma \ref{lemma:operator-estimates} and in particular \eqref{bd:nablaWf} with $w=\mathbbm{1}_{k=0}$, we know that for an Hamiltonian in class  \ref{HamiltonianClassAm} we have
\begin{align}
\|\nabla (P_0 \rho)(\tau)\|^2_{L^2}\leq C_H \|\nabla \rho(\tau)\|_{L^2}^2
\end{align}
for a suitable constant $C_H>0$ depending only the Hamiltonian $H$. Combining the bound above with the energy estimate \eqref{bd:energy} and the condition \eqref{bd:hypcstar}, we see that
\begin{align}
\|(\rho_\perp-g)(T_\nu)\|^2_{L^2}\leq \tilde{c}_* C_H\|\rho^{in}\|^2_{L^2}.
\end{align}
Finally, thanks to the bounds on $g$ in \eqref{bd:gmain} we see that 
\begin{align}
\|\rho_\perp(T_\nu)\|^2_{L^2}\leq2( \|g(T_\nu)\|^2_{L^2}+\|(\rho_\perp-g)(T_\nu)\|^2_{L^2})\leq 2(\e^{-\delta_* C_*+\pi/2}+\tilde{c}_* C_H)\|\rho^{in}\|^2_{L^2}.
\end{align}
Choosing $C_*$ sufficiently large and $\tilde{c}_*$ sufficiently small, we get
\begin{align}
\|\rho_\perp(T_\nu)\|^2_{L^2}\leq \frac12\|\rho^{in}\|^2_{L^2}.
\end{align}
Therefore, we conclude the proof of the corollary by choosing $c_*$ such that $1-c_*=\max\{ 1/2, 1-\tilde{c}_*\}$. \qed 

\begin{remark}
\label{rem:rho0}
We stress that we cannot directly iterate the proof of Corollary \ref{cormain2} to deduce the exponential decay of $\rho_{\perp}$. Indeed, the major obstacle is that $\rho_0$ is not conserved and therefore $\|\rho(T_\nu)\|_{L^2}\neq \|\rho_\perp(T_\nu)\|_{L^2}$ in general. Notice that having $\rho_0$ constant is a necessary condition in the proof of the result in \cite{CZDE20}. In particular, one cannot apply the result in \cite{CZDE20} to deduce enhanced dissipation from  mixing estimates on the advection problem ($\nu=0$) when $[P_0,\Delta]\neq 0$ (for example the cellular flow studied in detail in \cite{Brue:2022aa}). To iterate the bounds proved above, it would be enough to propagate quantitative smallness of $\rho_0$ (with respect to $\nu$) on suitable sub-diffusive time-scales, thus proving smallness of $\rho_\perp$ for longer time-scales.
\end{remark}

\subsection{Proof of Proposition~\ref{prop:conditional}}
The proof is based on iteration of the previous corollary, using crucially the assumption on the uniform estimate of $\| (\rho_0 - \eta) (t) \|_{L^2}\leq \nu^\alpha \|\rho^{in}\|_{L^2}$.
Let $T_{\nu}=C_* \nu^{-\frac{m}{m+2}}$ with $C_*$ sufficiently large and $\tilde{c}_*\in (0,1/2)$ sufficiently small as in the proof of Corollary \ref{cormain2}. In the iteration,  we assume $(n+1) T_{\nu} \leq \nu^{-1}$, where the case $n=0$ is the previous corollary, and divide the proof into two distinct cases. 

\medskip 
\noindent
\textbf{Case 1:} Assume

\begin{equation}\label{eq:CondCase1}
2\nu \int_{n T_{\nu}}^{(n+1) T_{\nu}}\norm{\nabla \rho(\tau)}^2_{L^2}\dd \tau\geq \tilde{c}_* \| \rho_\perp (n T_{\nu}  )\|^2_{L^2}.
\end{equation}
By properties of the projection $P_0$ and the fact that $P_\perp \eta=0$, we get
\begin{align}
 \| (\rho  &  - \eta) ((n+1) T_{\nu})\|^2_{L^2}  =\| \rho_\perp ((n+1) T_{\nu}) \|^2_{L^2} 
 + \norm{ (\rho_0 - \eta )((n+1)T_{\nu})}_{L^2}^2
 \\
\label{id:pop37} & =  \| \rho ((n+1 )T_{\nu}) \|_{L^2}^2 - \norm{ \rho_0 ((n+1)T_{\nu})}_{L^2}^2 
  +  \norm{ (\rho_0 - \eta) ((n+1)T_{\nu})}_{L^2}^2.
\end{align}
Using the assumption, we can bound the third term above by $\nu^{2\alpha} \| \rho^{in}\|_{L^2}^2$. By standard energy estimates on \eqref{eq:advdiff}, thanks to \eqref{eq:CondCase1}, the first term can be bounded by
\begin{align*}
 \| \rho (n T_{\nu}) \|_{L^2}^2 - \tilde{c}_*\| \rho_\perp  (n T_{\nu}) \|_{L^2}^2 &= (1- \tilde{c}_*) \| \rho_\perp  (n T_{\nu}) \|_{L^2}^2 + \| \rho_0 (n T_{\nu}) \|_{L^2}^2 \\
 &= (1- \tilde{c}_*) \| (\rho- \rho_0) (n T_{\nu}) \|_{L^2}^2 + \| \rho_0 (n T_{\nu}) \|_{L^2}^2 \\
 &\leq  (1- \tilde{c}_*) \| (\rho - \eta) (n T_{\nu}) \|_{L^2}^2 +\| \eta (n T_{\nu}) \|_{L^2}^2   
 \\
 & \quad 
 + 2  \| (\eta - \rho_0) (n T_{\nu})  \|_{L^2}^2 
 \\
 & \quad + 4 \| (\eta - \rho_0)(n T_{\nu}) \|_{L^2} ( \| \eta (n T_\nu) \|_{L^2} + \| (\rho - \eta) ( n T_\nu) \|_{L^2} )
 \end{align*}
where in the last inequality we sum and subtract $\eta$ in both terms and perform trivial bounds by expanding the squares. Having that both $\rho$ and $\eta$ are decreasing and thus bounded by the intial data, in view of the assumption on $\rho_0-\eta$ we get 
\begin{align*}
 \| \rho ((n+1 )T_{\nu}) \|_{L^2}^2 & \leq  (1- \tilde{c}_*) \| (\rho- \eta )(n T_{\nu}) \|_{L^2}^2 +\| \eta (n T_{\nu}) \|_{L^2}^2   + 10 \nu^\alpha \| \rho^{in} \|_{L^2}^2.
\end{align*}
Analogously, for the term involving $\norm{ \rho_0 ((n+1)T_{\nu})}_{L^2}^2 $ we get
\begin{align*}
\norm{ \rho_0 ((n+1)T_{\nu})}_{L^2}^2 &  \geq \norm{ \eta ((n+1) T_\nu)}_{L^2}^2 - 2 \| (\rho_0 - \eta)((n+1)T_{\nu}) \|_{L^2} \| \eta ((n+1)T_{\nu}) \|_{L^2}
\\
& \geq \norm{ \eta ((n+1) T_\nu)}_{L^2}^2 - 2 \nu^\alpha \| \rho^{in} \|_{L^2}^2.
\end{align*} 
Thus, all in all, we deduce that
\begin{align*}
\| (\rho  & - \eta) ((n+1) T_{\nu})\|^2_{L^2} \\
 & \leq (1 - \tilde{c}_* ) \| (\rho  - \eta) (n T_{\nu}) \|_{L^2}^2 +  \norm{  \eta ( nT_{\nu})}_{L^2}^2  - \norm{ \eta ((n+1)T_{\nu})}_{L^2}^2  + 10 \nu^\alpha \| \rho^{in}\|_{L^2}^2.
\end{align*}
Then, since $\tau \mapsto \| \nabla \eta(\tau) \|_{L^2}$ is decreasing (this can be proved by testing \eqref{eq:model00} by $P_0 \Delta P_0 \eta$ and using that $P_0 \eta = \eta$),   we notice that
\begin{align*}
\| \eta ((n+1) T_{\nu}) -  \eta ( n T_{\nu}) \|_{L^2}^2 & \leq \nu \int_{nT_{\nu}}^{(n+1) T_{\nu}} \| \nabla  \eta (n T_{\nu})  \|_{L^2}^2 ds 
\\
& \leq  C_* \nu^{\frac{2}{m+2}} \| \nabla \rho^{in} \|_{L^2}^2 \leq C  C_* \nu^{ \frac{2}{m+2}} \| \rho^{in} \|_{L^2}^2  \,
\end{align*}
Therefore, from the previous bound we conclude that  there exists a constant $C_0$ such that 
\begin{align} \label{eq:bound:iterate}
\| (\rho    - \eta) ((n+1) T_{\nu})\|^2_{L^2}  \leq  (1 - \tilde{c}_* ) \| (\rho  - \eta) (n T_{\nu}) \|_{L^2}^2  +( C_0 \nu^{\frac{2}{m+2}} + 10 \nu^\alpha) \| \rho^{in} \|_{L^2}^2 \,.
\end{align} \\
\textbf{Case 2: } Assume
$$2\nu \int_{n T_{\nu}}^{(n+1) T_{\nu}}\norm{\nabla \rho(\tau)}^2_{L^2}\dd \tau < \tilde{c}_* \| \rho_\perp (n T_{\nu}  )\|^2_{L^2}.$$
We get
 \begin{align*}
  \|(\rho_\perp-g)((n+1)T_{\nu})\|^2_{L^2} & \leq \nu \int_{n T_{\nu}}^{(n+1)T_{\nu}}\|\nabla (P_0\rho)(\tau)\|^2_{L^2}\dd \tau
 \\
 &  \leq C_H \nu \int_{n T_{\nu}}^{(n+1)T_{\nu}} \|\nabla  \rho(\tau)\|^2_{L^2}\dd \tau
 \\
 &  \leq C_H \tilde{c}_* \| \rho_\perp (n T_{\nu}) \|_{L^2}^2
\end{align*}
where where $g$ is the solution to the model problem \eqref{eq:model} initialised at time $n T_{\nu}$ with initial data $\rho_{\perp}(n T_{\nu})$.
From this, we conclude that
 \begin{align*}
\|\rho_\perp((n+1)T_{\nu})\|^2_{L^2}& \leq2( \|g((n+1)T_{\nu})\|^2_{L^2}+\|(\rho_\perp-g)((n+1)T_{\nu})\|^2_{L^2})
\\
& \leq 2(\e^{-\delta_* C_*+\pi/2}+\tilde{c}_* C_H)\|(\rho  - \rho_0) (n T_{\nu})  \|^2_{L^2}.
\end{align*}
and
 by choosing $C_*$ large enough  and $c_*$ sufficiently small (exactly as the ones in the proof of Corollary \ref{cormain2}), we have 
 $$\| (\rho  - \rho_0 )((n +1)T_{\nu}) \|_{L^2}^2 \leq \frac{1}{2} \|(\rho - \rho_0) (n T_{\nu})  \|^2_{L^2}\,. $$
Therefore, using the assumption, summing and subtracting $\eta$ we get 
$$ \| (\rho  -  \eta) ((n +1)T_{\nu})  \|_{L^2}^2 \leq \frac{1}{2} \| (\rho   -  \eta) (n  T_{\nu})  \|_{L^2}^2 + 6 \nu^\alpha \| \rho^{in} \|_{L^2}^2  \,.$$

\medskip 
\noindent
Finally, in both cases, we are able to prove the bound \eqref{eq:bound:iterate}. 
Iterating this bound we get 
\begin{align}
\label{bd:iterativefinal}
\| (\rho    - \eta) ((n+1) T_{\nu})\|^2_{L^2} & \leq (1- \tilde{c}_*)^{n+1} \| \rho^{in}_{\perp} \|_{L^2}^2 + ( C_0 \nu^{\frac{2}{m+2}} + 10 \nu^\alpha) \| \rho^{in} \|_{L^2}^2 \sum_{j=0}^{n} (1- \tilde{c}_*)^{j}.
\end{align}
To conclude the bound, we need to control $(\rho-\eta)(t)$ for $t\in (nT_\nu,(n+1)T_\nu)$, where it is not hard to see that it can be controlled by $\|(\rho-\eta)(nT_\nu)\|^2$ up to an error of size $\max\{\nu^{\frac{2}{m+2}},\nu^\alpha\}\|\rho^{in}\|^2$. To see this, we can repeat computations similar to the Case 1. The starting identity \eqref{id:pop37} is replaced by 
\begin{align}
\notag \| (\rho    - \eta) (t)\|^2_{L^2}  =\| \rho (t) \|_{L^2}^2 - \norm{ \rho_0 (t)}_{L^2}^2 
  +  \norm{ (\rho_0 - \eta) (t)}_{L^2}^2.
\end{align}
We can then bound the first term by $\|\rho(nT_\nu)\|_{L^2}^2$ and repeat the computations in Case 1 with $\tilde{c}_*=0$ to get
\begin{align*}
 \| \rho (t) \|_{L^2}^2 & \leq   \| (\rho- \eta )(n T_{\nu}) \|_{L^2}^2 +\| \eta (n T_{\nu}) \|_{L^2}^2   + 10 \nu^\alpha \| \rho^{in} \|_{L^2}^2.
\end{align*}
For the remaining terms we argue exactly as in Case 1 and, using  \eqref{bd:iterativefinal}, we finally see that there exists $C_1>0$ such that 
$$ \| (\rho  - \eta) (t )\|^2_{L^2} \leq  (1- \tilde{c}_*)^{n} \| \rho^{in} \|_{L^2}^2 + C_1 (  \nu^{\frac{2}{m+2}} +  \nu^\alpha ) \| \rho^{in} \|_{L^2}^2$$
 for any $t\in (nT_{\nu},(n+1)T_{\nu})$. Therefore, recalling that $T_{\nu} = C_* \nu^{- \frac{m}{m+2}} = C_* \lambda_\nu^{-1}$ there exists 
$\delta_* $ such that 
$$ \| (\rho  - \eta) (t )\|^2_{L^2} \lesssim \left ( \exp (- \delta_* \lambda_\nu t)  +  \nu^{\frac{2}{m+2}} + \nu^\alpha   \right ) \| \rho^{in} \|_{L^2}^2 $$
for any $t \leq \nu^{-1}$.
\qed

\section{Pseudo spectral bounds on the model problem}
\label{sec:pseudo}
In this section, we prove Theorem \ref{th:main2}, which quantifies the enhanced dissipation effect for the model problem \eqref{eq:model}. The proof of the bound \eqref{bd:gmain} is in fact a direct consequence of the Gearhart-Pr\"uss type theorem proved by  Wei in \cite{WeiDiffusion19} (see also  \cite{Helffer:2010aa,HelfferImproving21,Gallagher09}), which we state here for completness.
\begin{theorem}[\cite{WeiDiffusion19}]
\label{th:pseudoWei}
Let $\cA:D(\cA)\subset X\to X$ be a maximally accretive operator\footnote{An operator $\cA$ is maximally accretive if $\|(a I+\cA)f\|_X\geq a \|f\|_X$ for all $a\geq 0$ and $a_0I+\cA$ is surjective for some $a_0>0$.} acting on a Hilbert space $X$. Then, for all $f\in D(\cA)$, 
\begin{equation}
\label{bd:Wei}
\norm{\e^{-t\cA} f}_X\leq \e^{-t \Psi(\cA)+\frac{\pi}{2}}\norm{f}_X,
\end{equation}
where  $\Psi(\cA)$ is the pseudo spectral abscissa defined as
\begin{equation}
\label{def:pseudo}
\Psi(\cA)=\operatorname{inf}\{\|(\cA-i\lambda) f\|_X \, :\, f\in D(\cA), \, \lambda \in \mathbb{R}, \, \|f\|_X=1 \}.
\end{equation}
\end{theorem}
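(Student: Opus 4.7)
\medskip
\noindent\textbf{Proof plan for Theorem~\ref{th:pseudoWei}.}
My starting point is to convert the pseudospectral bound into a uniform resolvent estimate on the imaginary axis. For any $f\in D(\cA)$ and $\sigma\geq 0$, $\tau\in\R$, I would expand
\begin{equation}
\|(\cA+\sigma+i\tau)f\|_X^2=\|(\cA+i\tau)f\|_X^2+2\sigma\,\Re\langle\cA f,f\rangle+\sigma^2\|f\|_X^2\geq \|(\cA+i\tau)f\|_X^2\geq \Psi(\cA)^2\|f\|_X^2,
\end{equation}
using accretivity in the middle and the definition \eqref{def:pseudo} of $\Psi(\cA)$ at the end. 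Since $\cA$ is maximally accretive, $(\cA+s)^{-1}$ exists as a bounded operator on all of $X$ for every $s$ with $\Re s>0$, so the inequality above yields $\|(\cA+\sigma+i\tau)^{-1}\|_{X\to X}\leq \Psi(\cA)^{-1}$ uniformly for $\sigma>0$, $\tau\in\R$ (and the bound persists in the limit $\sigma\downarrow0$ on the range of $\cA+i\tau$).

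Next, for $f\in D(\cA)$ I would set $u(r)=\e^{-r\cA}f$ and, for a fixed $T>0$, study the function $v(r)=\chi(r)u(r)$ for a smooth nonnegative cutoff $\chi$ supported in $(0,T)$. The function $v$, extended by zero to $\R$, satisfies $\dot v+\cA v=\dot\chi\,u$ in $X$. Since $v$ is compactly supported in time, it admits an $X$-valued Fourier transform in $r$, and the identity becomes
\begin{equation}
(\cA+i\tau)\hat v(\tau)=\widehat{\dot\chi\, u}(\tau),\qquad \tau\in\R.
\end{equation}
Combined with the resolvent bound of the previous paragraph, this gives $\|\hat v(\tau)\|_X\leq \Psi(\cA)^{-1}\|\widehat{\dot\chi\, u}(\tau)\|_X$. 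Applying Plancherel's theorem in $\tau$ yields the Hardy-type inequality
\begin{equation}
\int_0^T\chi(r)^2\|u(r)\|_X^2\,\dd r\leq \Psi(\cA)^{-2}\int_0^T\dot\chi(r)^2\|u(r)\|_X^2\,\dd r.
\end{equation}

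Now I would exploit the fact that $r\mapsto\|u(r)\|_X^2$ is nonincreasing (by accretivity, $\frac{\dd}{\dd r}\|u\|_X^2=-2\Re\langle\cA u,u\rangle\leq 0$) to pass from the integral estimate to a pointwise bound on $\|u(T)\|_X$. Introducing the exponential weight $\chi(r)=\e^{\beta r}\varphi(r)$ with $\beta<\Psi(\cA)$ and repeating the argument for the shifted operator $\cA-\beta$ (whose pseudospectral abscissa on the imaginary axis is $\geq \Psi(\cA)-\beta$), I obtain a weighted version of the inequality above. Taking $\varphi(r)=\sin(\pi r/T)$ (which saturates the one-dimensional Poincar\'e inequality) and optimising over $\beta$ and $T$ drives the sharp constant. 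The right normalisation, which is precisely the content of Wei's argument, is to let $\beta\uparrow\Psi(\cA)$ while $T\to\infty$ in a coupled way; the $\pi/2$ in the final bound arises as the quarter-period of the optimal trigonometric cutoff.

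I expect the main technical obstacle to be obtaining the sharp constant $\pi/2$ rather than exponential decay with some (suboptimal) prefactor. The Plancherel plus Poincar\'e scheme outlined above yields quickly an estimate of the form $\|u(T)\|_X\leq C(T\Psi(\cA))\e^{-\alpha T\Psi(\cA)}\|f\|_X$ for some $\alpha<1$, and iterating on intervals of length $T$ upgrades this to an exponential bound, but with a non-optimal rate. Recovering the exact rate $\Psi(\cA)$ with constant $\e^{\pi/2}$ requires the weighted cutoff and the precise variational analysis of the ratio $\int\dot\chi^2/\int\chi^2$ with the weight $\e^{2\beta r}$; this is the delicate calculation that I would isolate, following the strategy in \cite{WeiDiffusion19}. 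Finally, the standard density argument extends the estimate from $D(\cA)$ to all of $X$.
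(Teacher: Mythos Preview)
The paper does not prove Theorem~\ref{th:pseudoWei}: it is quoted from \cite{WeiDiffusion19} ``for completeness'' and used as a black box, so there is no proof in the paper to compare against. Your sketch is in fact an accurate outline of Wei's own argument --- the resolvent bound on the closed right half-plane, the time-Fourier/Plancherel step yielding the Hardy-type inequality $\int\chi^2\|u\|^2\leq\Psi(\cA)^{-2}\int\dot\chi^2\|u\|^2$, and the observation that the sharp constant $\pi/2$ comes from the optimal trigonometric cutoff $\chi(r)=\sin(\pi r/T)$ --- so as a reconstruction of the cited result it is on the right track.
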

Therefore, the proof of Theorem \ref{th:main2} is reduced to an estimate of the pseudo spectral abscissa of the operator 
\begin{equation}
\label{def:Lperp}
\cL_{\perp}=u\cdot \nabla -\nu P_\perp \Delta P_\perp,
\end{equation}
when $X=L^2_\perp(M)$. Depending on the boundary conditions we have, the domain of this operator will be 
\begin{equation}
D(\cL_\perp)=\{f \in H^2(M) \, ;\, \de_n f|_{\partial M} =0\} \qquad \text{ or } \qquad D(\cL_\perp)=\{f \in H^2(M) \, ;\, f|_{\partial M} =0\}.
\end{equation}
First of all, we need the following
\begin{lemma}
\label{lem:macc}
The operator $\cL_\perp: D(\cL_\perp)\subset L^2_\perp(M)\to L^2_{\perp}(M)$ defined in \eqref{def:Lperp} is m-accretive.
\end{lemma}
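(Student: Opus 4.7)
The plan is to verify the two defining properties of m-accretivity separately: first the accretivity estimate $\mathrm{Re}\langle \cL_\perp f, f\rangle_{L^2} \geq 0$ for all $f\in D(\cL_\perp)$, and then surjectivity of $a_0 I + \cL_\perp$ onto $L^2_\perp(M)$ for some $a_0 > 0$ via Lax--Milgram. Throughout, I will use that $P_\perp$ is an orthogonal projection (hence self-adjoint and idempotent), that $f\in L^2_\perp(M)$ satisfies $P_\perp f = f$, and that $[u\cdot\nabla, P_0]=0$ (proved in Section \ref{sec:toolbox}), so $\cL_\perp$ indeed maps $D(\cL_\perp)$ into $L^2_\perp(M)$.

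For accretivity, the transport term contributes nothing: since $\nabla\cdot u=0$ and $u = \nabla^\perp H$ is tangent to $\partial M$, integration by parts gives $\mathrm{Re}\langle u\cdot\nabla f, f\rangle = 0$ for $f\in H^1(M)$ (with either Dirichlet or Neumann boundary conditions). For the diffusive term, using $P_\perp f = f$ and the self-adjointness of $P_\perp$,
\begin{equation}
-\nu \langle P_\perp \Delta P_\perp f, f\rangle = -\nu\langle \Delta f, P_\perp f\rangle = -\nu\langle \Delta f, f\rangle = \nu\|\nabla f\|_{L^2}^2 \geq 0,
\end{equation}
where the last equality uses integration by parts together with the Dirichlet or homogeneous Neumann condition. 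Combining the two estimates yields $\mathrm{Re}\langle \cL_\perp f, f\rangle \geq \nu\|\nabla f\|_{L^2}^2 \geq 0$.

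For surjectivity, fix any $a_0>0$ and introduce the sesquilinear form on $V := H^1(M)\cap L^2_\perp(M)$ (adapted to the boundary conditions: with Dirichlet conditions take $V = H^1_0(M)\cap L^2_\perp(M)$),
\begin{equation}
\mathfrak{b}(f,\varphi) = a_0\langle f,\varphi\rangle + \langle u\cdot\nabla f,\varphi\rangle + \nu\langle \nabla f,\nabla \varphi\rangle.
\end{equation}
Boundedness of $\mathfrak{b}$ on $V\times V$ follows from $u\in L^\infty$, and coercivity $\mathrm{Re}\,\mathfrak{b}(f,f) \geq \min(a_0,\nu)\|f\|_{H^1}^2$ follows because the transport part has zero real part on $V$. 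Given $g\in L^2_\perp(M)$, Lax--Milgram produces a unique $f\in V$ with $\mathfrak{b}(f,\varphi) = \langle g,\varphi\rangle$ for all $\varphi\in V$. Since $P_\perp$ is the orthogonal projection from $L^2(M)$ onto $L^2_\perp(M)$, testing against $\varphi = P_\perp \psi$ for arbitrary $\psi\in H^1(M)$ (and using $[u\cdot\nabla,P_0]=0$ together with $P_\perp f=f$) shows that the identity $a_0 f + u\cdot\nabla f - \nu P_\perp \Delta P_\perp f = g$ holds in the sense of $L^2_\perp(M)$. Standard interior and boundary elliptic regularity for the uniformly elliptic operator $-\nu\Delta$ (with the bounded lower-order term $u\cdot\nabla$) upgrades $f$ from $H^1$ to $H^2$ and gives the prescribed boundary trace, so that $f\in D(\cL_\perp)$ and $(a_0 I+\cL_\perp)f = g$.

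The step I expect to require the most care is the last one: converting the weak formulation, whose test space is $V$ rather than all of $H^1(M)$, into the pointwise equation $(a_0 I+\cL_\perp)f = g$ and verifying the boundary conditions. The resolution is precisely that the equation is only asked to hold modulo $\mathrm{Ker}(u\cdot\nabla) = \mathrm{Range}(P_0)$, which is exactly what $P_\perp \Delta P_\perp$ encodes; the commutation $[P_0, u\cdot\nabla]=0$ is what makes the weak problem on $V$ consistent with the ambient projected problem on $L^2_\perp(M)$.
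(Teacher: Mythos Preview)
Your proof is correct and follows essentially the same approach as the paper: accretivity via $\mathrm{Re}\langle \cL_\perp f,f\rangle = \nu\|\nabla f\|^2 \geq 0$, and surjectivity of $a_0 I+\cL_\perp$ via Lax--Milgram applied to the same bilinear form on $H^1\cap L^2_\perp$. In fact you are more careful than the paper on the last step (recovering the strong equation and the $H^2$ regularity needed for $f\in D(\cL_\perp)$), which the paper simply asserts.
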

\begin{proof}
The domain $D(\cL_\perp)=H^2(M)$  is clearly dense on $L^2_{\perp}(M)$.
To check the resolvent bound, we notice that for $f\in D(\cL_\perp)$ such that $\norm{f}_{L^2_\perp}=1$ we have
\begin{equation} \label{eq:coercivity-accretive}
\norm{(a I+\cL_\perp)f}_{L^2_\perp}^2=a+\norm{\cL_\perp f}^2_{L^2_\perp}+2a \jap{f,\cL_\perp f}_{L^2_{\perp}}\geq a+2 a\nu\norm{\nabla f}_{L^2_\perp}^2\geq a. 
\end{equation}
Finally, defining the bilinear form $B : H^1 \cap L^2_\perp \times H^1 \cap L^2_\perp \to \R $ as 
$$B(\varphi, \psi) =a  \langle \varphi, \psi \rangle_{L^2_\perp} + \langle u \cdot \nabla \varphi , \psi  \rangle_{L^2_\perp} + \nu \langle \nabla \varphi , \nabla \psi  \rangle_{L^2_\perp}  $$
we deduce from Lax Milgram that for any $a>0$ and $\nu >0$ the operator $a I+\cL_\perp$ is surjective.
\end{proof}
We are now in the position of applying Theorem \ref{th:pseudoWei} to the operator $\cL_\perp$, meaning that we need to estimate the pseudo spectral abscissa defined in \eqref{def:pseudo}. The main result in this section is the following.
\begin{proposition} \label{prop:main}
Let $H$ be a Hamiltonian in the class $(\mathrm{A}^m)$ as defined in Definition \ref{def:ClassHamiltoniansA} and let $\cL_\perp: D(\cL_\perp)\subset L^2_\perp(M)\to L^2_{\perp}(M)$  be the operator defined in \eqref{def:Lperp}. Then, there exists a constant $\delta_*>0$ such that 
\begin{equation}
\label{bd:pseudoLperp}
\Psi(\cL_\perp)\geq \delta_*\lambda_\nu, \qquad \lambda_\nu :=\nu^{\frac{m}{m+2}}
\end{equation}
where $\Psi(\cdot)$ is defined in \eqref{def:pseudo}.
\end{proposition}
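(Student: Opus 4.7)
The plan is to invoke Wei's Gearhart--Pr\"uss type theorem (Theorem~\ref{th:pseudoWei}) combined with Lemma~\ref{lem:macc}, which reduces the whole task to proving the pseudospectral estimate $\Psi(\cL_\perp)\geq \delta_*\lambda_\nu$. Fix $f\in D(\cL_\perp)$ with $\|f\|_{L^2_\perp}=1$ and $\lambda\in\RR$, and let $g=(\cL_\perp-i\lambda)f$. Taking the real part of $\langle g,f\rangle_{L^2_\perp}$ and using that $u\cdot\nabla$ is skew-adjoint while $P_\perp f=f$, I obtain $\nu\|\nabla f\|_{L^2}^2=\Re\langle g,f\rangle\leq \|g\|_{L^2_\perp}$. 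Hence it suffices to work under the a priori assumption $\|g\|_{L^2_\perp}<\delta_*\lambda_\nu$, which forces $\|\nabla f\|_{L^2}^2\lesssim \delta_*\nu^{-2/(m+2)}$, and then derive a contradiction.

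I would then pass to the rescaled action-angle coordinates of Section~\ref{sec:rescaled-action-angle} and take angular Fourier series so that $u\cdot\nabla$ acts as $ik\Omega(h)$. Choose the fundamental scale $\delta=c_0\nu^{1/(m+2)}$, for which $\delta^m\simeq \lambda_\nu$ and $\delta^{m+2}\simeq \nu$. For each $k\in\ZZ\setminus\{0\}$ with $|k|\leq K\sim|\lambda|$ (the range of possibly resonant modes), define the resonant action set $R_k:=\{h\in I:|k\Omega(h)-\lambda|\leq \delta^m|k|\}$ and the associated stream-adapted tube $\cQ_k:=\Phi(\TT\times R_k)$. By the non-degeneracy condition \ref{item:ClassACondOnDerivative}, each $R_k$ consists of finitely many intervals of action-length $\lesssim \delta$, the worst case being resonances clustering near the elliptic point where $\Omega$ is flattest.

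On the non-resonant complement I introduce a Fourier multiplier $\cW$ with symbol
\begin{equation*}
w(k,h)=\frac{\chi_k(h)}{ik\Omega(h)-i\lambda},\qquad w(0,h)\equiv 0,
\end{equation*}
where $\chi_k$ is a smooth cutoff vanishing on $R_k$ and equal to one sufficiently far outside. By construction $|w|\lesssim \delta^{-m}$ and, with a suitable choice of cutoff, $|\partial_h w|\lesssim |k|\delta^{-m}$, so Lemma~\ref{lemma:operator-estimates}(b) yields $\|\nabla(\cW f)\|_{L^2}\lesssim \delta^{-m}\|\nabla f\|_{L^2}$. Testing $\langle g,\cW f\rangle_{L^2_\perp}$ and exploiting that $w$ inverts the transport operator on the support of $\chi_k$ produces
\begin{equation*}
\|f\|_{L^2(M\setminus\bigcup_k\cQ_k)}^2\lesssim \delta^{-m}\|g\|_{L^2_\perp}+\nu\delta^{-m}\|\nabla f\|_{L^2}^2\lesssim \delta^{-m}\|g\|_{L^2_\perp}.
\end{equation*}
On the resonant tubes I apply the Poincar\'e inequality \eqref{eq:SecondPoincare} of Lemma~\ref{lem:Poincare} with $\gamma_k\sim\delta$, together with \eqref{eq:ThirdPoincare} since $P_0 f=0$, and sum over $k$ with overlap control to obtain $\sum_k \|f\|_{L^2(\cQ_k)}^2\lesssim \delta\|\nabla f\|_{L^2}\|f\|_{L^2}$. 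Combining this with $\nu\|\nabla f\|^2\leq \|g\|$ yields $1=\|f\|^2\lesssim \delta^{-m}\|g\|+\delta(\|g\|/\nu)^{1/2}$; plugging in $\delta=c_0\nu^{1/(m+2)}$ forces $\|g\|\gtrsim \lambda_\nu$, contradicting the working hypothesis once $\delta_*$ is chosen small enough.

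The hard part will be two intertwined technical issues. First, for large $|\lambda|$ there are up to $\sim|\lambda|$ resonant sets $R_k$ which may cluster and overlap; bounding the total resonant width and the overlap by a uniform multiple of $\delta$ is exactly where the non-degenerate elliptic point assumption \ref{item:ClassACond2} is crucial, since a degenerate or hyperbolic critical point would allow the $R_k$ to merge into sets of size $\gg \delta$. Second, because $[P_\perp,\Delta]\neq 0$ one cannot decouple angular modes as in the radial case, so the multiplier $\cW$ must be engineered so that $\nabla(\cW f)$ is controlled by $\nabla f$ without spurious inverse powers of $\nu$; this is precisely where the rescaled action-angle framework---in which derivatives of $w$ in $h$ pick up a factor of $|k|$ cleanly, rather than a singular one from the area function of the canonical action variable---is essential, as anticipated in the discussion preceding Lemma~\ref{lemma:operator-estimates}.
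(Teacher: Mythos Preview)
Your overall architecture matches the paper's: split $\|f\|^2$ into a piece supported on resonant tubes (controlled by the Poincar\'e-type inequalities of Lemma~\ref{lem:Poincare}) and a non-resonant complement (controlled via a Fourier multiplier in the angular variable), with the energy identity $\nu\|\nabla f\|^2\leq\|\cL_\lambda f\|\|f\|$ and the scale $\delta\sim\nu^{1/(m+2)}$ playing the same role.

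The gap is in your multiplier step. With $w(k,h)=\chi_k(h)/(ik\Omega(h)-i\lambda)$, the claim $|\partial_h w|\lesssim |k|\,\delta^{-m}$ is false: the term $\chi_k\,k\Omega'/(k\Omega-\lambda)^2$ is present wherever $\chi_k\neq 0$, and near the edge of $R_k$ it has size $|k||\Omega'|/(\delta^m|k|)^2$, i.e.\ of order $\delta^{-2m}/|k|$ at a generic resonance (at best $\delta^{-m-1}/|k|$ if the resonance sits near the elliptic point where $|\Omega'|\sim\delta^{m-1}$). For $|k|=1$ this already exceeds $|k|\,\delta^{-m}$, so Lemma~\ref{lemma:operator-estimates}(b) does not apply with constant $\delta^{-m}$; falling back on part~(a) forces $N\gtrsim\delta^{-m-1}$ in $\|\nabla(\cW f)\|\lesssim \delta^{-m}\|\nabla f\|+N\|f\|$, and the dissipative error $\nu\|\nabla f\|\,\|\nabla(\cW f)\|$ then fails to close at $\delta=\nu^{1/(m+2)}$ for $m\geq 2$.

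The paper avoids this by using a \emph{bounded} sign-type multiplier $\chi(k,h)$ with $|\chi|\leq 1$, $\chi\cdot\mathrm{sign}(k\Omega-\lambda)\geq 0$, and $|\partial_h\chi|\lesssim\delta^{-1}$, so that Lemma~\ref{lemma:operator-estimates}(a) applies with $N=\delta^{-1}$. The factor $\delta^m$ is recovered not from $|w|$ but from the lower bound $|k\Omega-\lambda|\geq\delta^m$ on the non-resonant set, via $\Im\langle(u\cdot\nabla-i\lambda)f,\cX f\rangle\geq \delta^m\|f\|^2_{L^2(M\setminus\cE_{\lambda,\delta})}$; this is what makes the balance $\nu\delta^{-2}\simeq\delta^m$ close. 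A secondary point: the paper resolves the overlap issue you flag as ``the hard part'' by an explicit case split $|\lambda|\lessgtr\gamma_0\delta^{-1}$, redefining the thin-set width as $\delta^{m-1}/|k|$ and taking $|\partial_h\chi|\lesssim|k|$ in the large-$|\lambda|$ regime, together with disjointness Lemmas~\ref{lemma:subset} and~\ref{lemma:subsethigh}.
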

The proof strategy of this proposition is inspired by what is done for shear and radial flows \cite{Gallay:2021aa}. However, here there are major differences that give rise to several technical challenges to overcome. One major obstacle is the fact that we cannot perform a $k$-by-$k$ analysis since the operator $P_\perp\Delta P_\perp$ does not decouple in frequencies. The decoupling in frequencies is helpful since one can replace $\lambda$ by $\lambda/k$ and study a simpler one dimensional problem. On the other hand, here we need to be much more careful since we will need to sum over all $k$'s, and to ensure summability we have to exploit specific properties of the Hamiltonians we consider. 

The rest of this section is dedicated to the proof of Proposition \ref{prop:main}, which we have to split in different cases that needs to be treated separately. 
\subsection{Proof of Proposition \ref{prop:main}}
By the definition \eqref{def:pseudo}, the goal is to prove 
\begin{equation}
\label{bd:pseudolambda}
\norm{(\cL_\perp-i\lambda)f}_{L^2_\perp}\geq \delta_* \lambda_\nu\norm{f}_{L^2_\perp}, \qquad \text{ for all } \lambda \in \RR.
\end{equation}
In the sequel, we will often omit the $\perp$ and $L^2_\perp$ subscript to ease the notation, and we denote 
\begin{align}
&\cL_\lambda:=\cL_\perp-i\lambda=(u\cdot \nabla -i\lambda)-\nu P_\perp\Delta P_\perp,\\
&\jap{f_1,f_2}:=\jap{f_1,f_2}_{L^2(M)}=\int_M f_1 f_2 \dd x, \qquad \text{for all } f_1,f_2\in L^2_\perp(M).
\end{align}

A first rough upper bound on the pseudo spectral abscissa is readily obtained from the dissipative part of the operator. Namely, since $P_\perp f=f$ for all $f\in L^2_{\perp}$, integrating by parts we get
\begin{equation}
\Re(\jap{\cL_{\lambda} f,f})=-\nu \jap{\Delta P_\perp f, P_\perp f}=-\nu \jap{\Delta f,  f}=\nu \norm{\nabla f}^2
\end{equation}
Thus, from the Cauchy-Schwarz inequality we have
\begin{equation}
\label{bd:ReL0}
\nu \norm{\nabla f}^2=|\Re(\jap{\cL_{\lambda} f,f})|\leq \norm{\cL_{\lambda} f}\norm{f}.
\end{equation}
Since $P_0 f=0$, we can apply Poincaré's inequality (see also \eqref{eq:ThirdPoincare}) above and obtain 
\begin{equation}
\label{bd:ReL}
\norm{f}^2\leq C_P\norm{\nabla f}^2\qquad \Longrightarrow \qquad 
\norm{\cL_{\lambda} f}\geq \frac{\nu}{C_P} \norm{f},
\end{equation}
which is the standard spectral gap one expects from the dissipative operator $-\nu P_\perp \Delta P_\perp$. To improve this bound, we have to crucially exploit the advection, which in the estimate above has not been used. With the notation introduced in Section \ref{sec:Fourier}, we know that 
\begin{equation}
\mathcal{F}(u\cdot \nabla -i \lambda)=ik (\Omega(h)-\lambda/k).
\end{equation}
Since we are working on $L^2_\perp$, we have $k\neq 0$. Thus, we can hope to gain information from this part of the operator $\cL_\lambda$ whenever $\Omega(h)-\lambda/k$ is bounded from below. We need to introduce some sets and distinguish different cases to account for this. The general idea is to find a thin set $\cE_{\lambda,\delta}$ concentrated where $|\Omega(h)-\lambda/k|\ll1$ for all $k$ and split 
\begin{equation}
\label{eq:split}
\int_M |f|^2 \dd x=\int_{M\setminus \cE_{\lambda,\delta}} |f|^2 \dd x+\int_{\cE_{\lambda,\delta}} |f|^2 \dd x.
\end{equation}
For the integral concentrated on $\cE_{\lambda,\delta}$, we  exploit the ``thinness/smallness" of this set through the inequalities in Lemma \ref{lem:Poincare}. On the set $M\setminus \cE_{\lambda,\delta}$ we exploit the advection via the Fourier multiplier method used in \cite{WeiDiffusion19,Gallay:2021aa,LiPseudo20} for instance, suitably adapted to our case where we cannot handle each angular mode separately.

In the following, we recall that $m\in \N$ is the order of the critical point of the period function $\Omega(h)$, see Definition \ref{def:ClassHamiltoniansA}. Then, we fix two constants 
\begin{equation}
0<\delta=\delta(\nu)\ll 1, \qquad 0<\gamma_0\ll 1
\end{equation}
which will be specified later in the proof.

 We divide the proof on two cases, depending on how large $\lambda$ is. 
 
 \subsubsection{\textbf{Case}  $\boldsymbol{| \lambda | \leq \gamma_0\delta^{-1}}$} To perform the splitting argument in \eqref{eq:split}, we have to first introduce the thin sets and the main Fourier multiplier which will allows us to exploit properties of the advection operator. After proving some key properties of these sets, we can proceed with the pseudospectral bounds.
  
  \medskip
  \noindent
 $\circ$ \textbf{Thin sets.}
  Given $\lambda$, $k\neq0$ and $0<\delta\ll 1$, we define the thickened level sets
\begin{equation}
\label{def:Eklambda}
E_{k, \lambda, \delta} \coloneqq \left\{ (\theta, h) \in \T \times I : \left|\Omega(h) - \frac{\lambda}{k} \right| < \delta^m \right\}, \qquad \cE_{k,\lambda,\delta}=\Phi( E_{k, \lambda, \delta}),
\end{equation}
 where we are following the notation introduced in Section \ref{sec:toolbox}. 
  We use bold symbols to denote a $\delta$-neighbourhood of the sets above, namely 
   \begin{equation}
   \label{def:Eklambdathick}
 \boldsymbol{E}_{k, \lambda, \delta} =\{h \in I \, : \, \mathrm{dist}(h,E_{k,\lambda,\delta})<\delta\}, \qquad \boldsymbol{\mathcal{E}}_{k, \lambda, \delta}=\Phi( \boldsymbol{E}_{k, \lambda, \delta}).
 \end{equation}
 Notice that, with the stream-adapted rectangles defined in \eqref{def:Box1}, we also know   
 \begin{equation}
 \boldsymbol{E}_{k, \lambda, \delta}=\bigcup_{\{h'\in E_{k,\lambda,\delta}\}} Q_{\delta}(h'),
 \end{equation}
 where clearly one can extract a finite covering with rectangles thanks to the compactness of the set $E_{k,\lambda,\delta}$. For technical reasons, it is convenient to directly isolate also a set around the critical point without having any dependence on $k$. Hence, since we are assuming that the critical point is at $h=0$, following the notation \eqref{def:Box1}, given $\mathsf{C}_0$ sufficiently large to be specified afterwards, we define \footnote{The definition of this set is related to the specific choice of $q(h)$, namely one needs $|q'(h)|\lesssim\delta$. In the non-degenerate setting, we have $q(h)=h^2$ and therefore the sets are equivalent (up to constants).}
 \begin{equation}
 \label{def:Bdelta}
 Q_{\mathsf{C}_0\delta} =Q_\delta(0)= \{ (\theta, h) \in \T \times I : |h| \leq \mathsf{C}_0 \delta  \}, \qquad \mathcal{Q}_{\mathsf{C}_0\delta}=\Phi( Q_{\mathsf{C}_0\delta}).
 \end{equation}
 Finally, we choose the set for the splitting argument in \eqref{eq:split} as  
 \begin{equation}
 \label{def:Elambda}
 \mathcal{E}_{\lambda,\delta}  \coloneqq  \mathcal{Q}_{\mathsf{C}_0\delta} \cup \left (   \bigcup_{k \in \Z \setminus \{ 0\}}  \boldsymbol{\mathcal{E}}_{k, \lambda, \delta} \cap \mathcal{Q}_{\mathsf{C}_0\delta}^c  \right ), \qquad E_{\lambda,\delta}=\Phi^{-1}(\mathcal{E}_{\lambda,\delta}).
 \end{equation} 
These sets satisfies the following properties.
 \begin{lemma} \label{lemma:subset}
Let $|\lambda|\leq \gamma_0\delta^{-1}$, $k\neq 0$ be such that $E_{k,\lambda,\delta}\neq \emptyset$. Then, there exists a constant $C_*>0$, independent of $\lambda,k$, such that for any point $(h_{k,\lambda,\delta},\bar{\theta})\in E_{k,\lambda,\delta}\cap Q_{\delta}^c$, one has
\begin{equation}
\label{eq:incl}
\boldsymbol{E}_{k,\lambda,\delta}\cap Q^c_{\delta}\subset Q_{C_*\delta}(h_{k,\lambda,\delta}).
\end{equation}
Moreover, for any $k\neq k'$, 
\begin{equation}
\label{eq:disjoint}
 Q_{32C_*\delta}(h_{k,\lambda,\delta})\cap Q_{32C_*\delta}(h_{k',\lambda,\delta})=\emptyset.
\end{equation}
\end{lemma}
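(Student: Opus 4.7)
The plan is to reduce both claims to one-dimensional estimates on the sub-interval of $I$ on which $E_{k,\lambda,\delta}$ is supported. The starting observation is that the critical value $h=0$ is excluded from $I=H(M\setminus\{x_0\})$, so by \ref{item:ClassACondOnDerivative} the derivative $\Omega'$ is sign-definite on $I$ and $\Omega$ is strictly monotonic. Consequently $E_{k,\lambda,\delta}=\T\times J_{k,\lambda,\delta}$ for a single connected interval $J_{k,\lambda,\delta}\subset I$, and $\boldsymbol{E}_{k,\lambda,\delta}$ corresponds via the $\T$-factor to the $\delta$-thickening $\boldsymbol{J}_{k,\lambda,\delta}\subset I$ of $J_{k,\lambda,\delta}$. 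Both conclusions then reduce to statements about $h$-distances in $I$.

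For the inclusion \eqref{eq:incl}, the key preliminary estimate is the diameter bound
\[
\mathrm{diam}\bigl(J_{k,\lambda,\delta}\cap\{h\geq\mathsf{C}_0\delta\}\bigr)\leq\frac{2\delta}{c\,\mathsf{C}_0^{m-1}},
\]
which follows by integrating $|\Omega'(\xi)|\geq c\,\xi^{m-1}\geq c\,(\mathsf{C}_0\delta)^{m-1}$ over $[h_1,h_2]$ and using $|\Omega(h_1)-\Omega(h_2)|<2\delta^m$ for $h_1,h_2\in J_{k,\lambda,\delta}\cap\{h\geq\mathsf{C}_0\delta\}$. Given $h\in\boldsymbol{J}_{k,\lambda,\delta}$ with $h>\mathsf{C}_0\delta$ and its witness $h'\in J_{k,\lambda,\delta}$, $|h-h'|<\delta$, I would split into two cases. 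If $h'\geq\mathsf{C}_0\delta$, the diameter bound applied to $(h',h_{k,\lambda,\delta})$ and the triangle inequality yield $|h-h_{k,\lambda,\delta}|<\delta+2\delta/(c\,\mathsf{C}_0^{m-1})$. If instead $h'<\mathsf{C}_0\delta\leq h_{k,\lambda,\delta}$, convexity of $J_{k,\lambda,\delta}$ forces $\mathsf{C}_0\delta\in J_{k,\lambda,\delta}$, so applying the diameter bound to $(\mathsf{C}_0\delta,h_{k,\lambda,\delta})$ gives $h_{k,\lambda,\delta}\leq\mathsf{C}_0\delta+2\delta/(c\,\mathsf{C}_0^{m-1})$; combined with $h\in[\mathsf{C}_0\delta,\mathsf{C}_0\delta+\delta)$ this yields the same quantitative bound. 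Setting $C_*:=1+2/(c\,\mathsf{C}_0^{m-1})$ closes both cases.

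For the disjointness \eqref{eq:disjoint}, non-emptiness of $E_{k,\lambda,\delta}\cap Q_\delta^c$ (and analogously for $k'$), together with $\Omega(I)\subset[\Omega_{\min},\Omega_{\max}]$ from \ref{item:ClassACond3}, constrains $|k|\leq 2|\lambda|/\Omega_{\min}$ once $\delta^m\leq\Omega_{\min}/2$. Then
\[
\Bigl|\tfrac{\lambda}{k}-\tfrac{\lambda}{k'}\Bigr|=\frac{|\lambda|\,|k-k'|}{|kk'|}\geq\frac{\Omega_{\min}^2}{4|\lambda|}\geq\frac{\Omega_{\min}^2\,\delta}{4\gamma_0},
\]
using $|\lambda|\leq\gamma_0\delta^{-1}$. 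Combining with $|\Omega(h_{k,\lambda,\delta})-\lambda/k|<\delta^m$ and the Lipschitz bound $|\Omega(h_{k,\lambda,\delta})-\Omega(h_{k',\lambda,\delta})|\leq\|\Omega'\|_{\infty}|h_{k,\lambda,\delta}-h_{k',\lambda,\delta}|$ (with $\|\Omega'\|_\infty$ finite by \ref{item:ClassACond3}) gives
\[
|h_{k,\lambda,\delta}-h_{k',\lambda,\delta}|\geq\frac{1}{\|\Omega'\|_{\infty}}\Bigl(\frac{\Omega_{\min}^2\,\delta}{4\gamma_0}-2\delta^m\Bigr).
\]
Choosing $\gamma_0$ a sufficiently small absolute constant, depending only on $C_*$, $\Omega_{\min}$ and $\|\Omega'\|_{\infty}$, makes this exceed $64\,C_*\delta$ for all $\delta$ small enough, delivering the disjointness.

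The main obstacle will be the case analysis in the inclusion step, specifically the configuration where $J_{k,\lambda,\delta}$ straddles the boundary $\{h=\mathsf{C}_0\delta\}$ so that the naive witness $h'$ lies on the wrong side. Using convexity of the interval to insert $\mathsf{C}_0\delta$ itself as a surrogate witness is what lets the same bound $C_*\delta$ go through uniformly, and this is delicate because the diameter estimate requires both endpoints to sit in the region where the lower bound $|\Omega'(h)|\geq c\,h^{m-1}$ is non-degenerate with respect to $\delta$.
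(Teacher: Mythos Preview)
Your proposal is correct and follows essentially the same approach as the paper: both arguments use the lower bound $|\Omega'(h)|\geq c\,h^{m-1}$ on $Q_\delta^c$ together with the mean value theorem to control the diameter of $E_{k,\lambda,\delta}\cap Q_\delta^c$, and both derive the separation \eqref{eq:disjoint} from $|\lambda/k-\lambda/k'|\gtrsim 1/|\lambda|\geq \delta/\gamma_0$ combined with the Lipschitz bound on $\Omega$ and a smallness choice of $\gamma_0$.

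The one place where your argument is more careful is the boundary configuration for \eqref{eq:incl}: when $h\in\boldsymbol{E}_{k,\lambda,\delta}\cap Q_\delta^c$ has its witness $h'\in E_{k,\lambda,\delta}$ lying inside $Q_\delta$, the paper simply writes that $\boldsymbol{E}_{k,\lambda,\delta}$ is the $\delta$-thickening of $E_{k,\lambda,\delta}$ and passes directly to $C_*=\tilde C_*+1$, glossing over this case. Your use of the monotonicity of $\Omega$ (hence convexity of $J_{k,\lambda,\delta}$) to insert $\mathsf{C}_0\delta$ as a surrogate point in $E_{k,\lambda,\delta}\cap\overline{Q_\delta^c}$ is a clean way to close this, and leads to the same constant. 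The monotonicity claim itself is justified since $0\notin I$ and \ref{item:ClassACondOnDerivative} forces $\Omega'\neq 0$ on $I$.
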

 \begin{proof}[Proof of Lemma \ref{lemma:subset}]
 Thanks to the assumption \ref{item:ClassACond3} in Definition \ref{def:ClassHamiltoniansA}, we have $\underline{c}\leq\Omega(h)\leq \overline{C}$. Therefore 
 \begin{equation}
 E_{k,\lambda,\delta}\neq \emptyset \iff \underline{c}-\delta^m\leq \frac{\lambda}{k}\leq \overline{C}+\delta^m.
 \end{equation}
 Thus, for $\delta$ sufficiently small compared to $\underline{c}$, we know that 
 \begin{equation}
 \label{eq:lambdak}
 |\lambda|\simeq |k|.
 \end{equation}
 In view of the assumption \ref{item:ClassACondOnDerivative}, we get
\begin{equation}
\inf_{h \in  Q_{\delta}^c} |\Omega'(h)|\geq \tilde{c}_* \delta^{m-1},
\end{equation}
for a constant $\tilde{c}_*$ not depending on $\lambda,k$. 
Then, let $h,h'\in E_{k,\lambda,\delta}\cap Q_{\delta}^c$. By the mean value theorem we know that 
\begin{equation}
|\Omega(h)-\Omega(h')|=|\Omega'(h_*)||h-h'|\geq  \tilde{c}_*\delta^{m-1}|h-h'|.
\end{equation}
On the other hand, since $h,h'\in E_{k,\lambda,\delta}$, by the triangle inequality we have 
\begin{equation}
|\Omega(h)-\Omega(h')|\leq|\Omega(h)-\lambda/k|+|\lambda/k-\Omega(h')|\leq 2\delta^m.
\end{equation}
Combining the two inequalities above, denoting  $\tilde{C}_*=2\tilde{c}_*^{-1}$, we get
\begin{equation}
|h-h'|\leq \tilde{C}_*\delta \qquad \text{for all } h,h'\in E_{k,\lambda,\delta}\cap Q_{\delta}^c.
\end{equation}
Since $\boldsymbol{E}_{k,\lambda,\delta}$ is a $\delta$-neighbourhood of $E_{k,\lambda,\delta}$, and the inequality above tells us that $E_{k,\lambda,\delta}\cap Q_{\delta}^c\subset Q_{\delta\tilde{C}_*}(h')$, we conclude that \eqref{eq:incl} holds true with $C_*=\tilde{C}_*+1$.

To prove the disjointness of the rectangles stated in \eqref{eq:disjoint}, let $h_{j,\lambda,\delta}\in E_{j,\lambda,\delta}$ with $j\in \{k,k'\}$ and $k\neq k'$. Denoting $h_j:=h_{j,\lambda,\delta}$, by the mean value theorem we have 
\begin{equation}
\label{eq:Omkk'}
|\Omega(h_k)-\Omega(h_{k'})|= |\Omega'(h_*)||h_k-h_{k'}| \leq C_1 |h_k-h_{k'}|,
\end{equation}
for some constant $C_1\geq 1$. On the other hand, since $h_k\in E_{k,\lambda,\delta}$ and $h_{k'}\in E_{k',\lambda,\delta}$, by the reverse triangle inequality, we get 
\begin{align}
\label{bd:lk0}
|\Omega(h_k)-\Omega(h_{k'})|&\geq \left|\frac{\lambda}{k}-\frac{\lambda}{k'}\right| -\left|\Omega(h_k)-\frac{\lambda}{k}\right|-\left|\Omega(h_{k'})-\frac{\lambda}{k'}\right| \geq \frac{|\lambda|}{|k|}\frac{|k-k'|}{|k'|}-2\delta^m.
\end{align}
Combining \eqref{eq:lambdak} (valid for both $k$ and $k'$) with the fact that $|k-k'|\geq 1$, we have 
\begin{equation}
\label{bd:lk1}
\frac{|\lambda|}{|k|}\frac{|k-k'|}{|k'|}\gtrsim |\lambda|^{-1}.
\end{equation}
Hence, since we are considering the case $|\lambda| \leq \gamma_0\delta^{-1}$, we know that there is constant $c_1>0$  such that 
\begin{equation}
|\Omega(h_k)-\Omega(h_{k'})|\geq \gamma_0^{-1} c_1\delta-2\delta^m.
\end{equation}
Choosing $\gamma_0= (100c_1 C_1 C_*)^{-1}$, from the inequality above and \eqref{eq:Omkk'} we get 
\begin{equation}
|h_k-h_{k'}|\geq 50 C_* \delta,
\end{equation}
whence proving \eqref{eq:disjoint}.
 \end{proof}

\medskip
\noindent
$\circ$ \textbf{Bounds close to level sets:} with the sets $\mathcal{E}_{\lambda,\delta}$ \eqref{def:Elambda} and Lemma \ref{lemma:subset} at hand, we are ready to estimate the second term in \eqref{eq:split}. In particular, we claim that there exists a universal constant $C_l>0$ such that 
 \begin{equation}
 \label{bd:claimE}
 \int_{\mathcal{E}_{\lambda,\delta} } |f|^2 \, dx   \leq  C_l\frac{\delta^2}{\nu}\| \cL_{\lambda} f \|\norm{f}+ \frac{1}{2} \| f \|^2 \,.
\end{equation} 
 To prove this claim, appealing to \eqref{eq:ThirdPoincare} and Lemma \ref{lem:Hamiltonian}, we first notice that
 \begin{equation}
 \int_{\mathcal{Q}_\delta } |f|^2 \, dx\leq \norm{T (\nabla H\circ \Phi)}_{L^\infty(Q_\delta)}^2\norm{\nabla f}_{L^2(\mathcal{Q_\delta})}^2\lesssim \delta^2\norm{\nabla f}^2.
 \end{equation}
 Using \eqref{bd:ReL0}, we get
 \begin{equation}
 \label{bd:Bdelta}
  \int_{\mathcal{Q}_\delta } |f|^2 \, dx\leq \tilde{C} \frac{\delta^2}{\nu}\norm{\cL_\lambda f}\norm{f} \end{equation}
for some universal constant $\tilde{C}$. This bound is consistent with the claim \eqref{bd:claimE}.
 
 Let us now consider the bound on the set $\boldsymbol{\cE}_{k,\lambda,\delta}\cap Q_{\delta}^c$, where we want to apply the Poincaré-type inequality across the streamlines \eqref{eq:SecondPoincare}. In particular, thanks to \eqref{eq:incl}, we observe that
 \begin{align}
 \int_{\boldsymbol{\cE}_{k,\lambda,\delta}\cap \mathcal{Q}_{\delta}^c}|f|^2\dd x&= \int_{\boldsymbol{E}_{k,\lambda,\delta}\cap Q_{\delta}^c}|f\circ\Phi|^2q'(h)T(h)\dd h\dd\theta\\
 &\leq \int_{ Q_{C_*\delta}(h_{k})}|f\circ\Phi|^2q'(h)T(h)\dd h\dd\theta=\norm{f}^2_{L^2(\mathcal{Q}_{C_*\delta}(h_{k}))},
 \end{align}
 where $h_k=h_{k,\lambda,\delta}$. Therefore, applying \eqref{eq:SecondPoincare} with $\eta=1$, $\gamma=C_*\delta$ and $K=32$,  we get
\begin{align}
 \int_{\boldsymbol{\cE}_{k,\lambda,\delta}\cap \mathcal{Q}_{\delta}^c}|f|^2\dd x\leq \, &\frac14 \norm{f}^2_{L^2(\mathcal{Q}_{32C_*\delta}(h_{k}))}\\
 \notag& +32C_*\delta \norm{f}_{L^2(\mathcal{Q}_{32C_*\delta}(h_{k}))}\norm{\nabla f}_{L^2(\mathcal{Q}_{32C_*\delta}(h_{k}))}\norm{\de_h\Phi}_{L^\infty(Q_{32C_*\delta}(h_{k}))}\\
\label{bd:Ek}
 & +16C_*\delta \norm{f}_{L^2(\mathcal{Q}_{32C_*\delta}(h_{k}))}^2\norm{[\log(q'T)]'}_{L^\infty(Q_{32C_*\delta}(h_{k}))}.
\end{align}
Thanks to the choice of the rescaled action angle variables we know that $\de_h \Phi \in L^\infty$.
To bound the last term in \eqref{bd:Ek}, since $h_k\in \boldsymbol{\mathcal{E}}_{k,\lambda,\delta} \cap Q_{\delta}^c$, we know that
\begin{align}
\norm{[\log(q'T)]'}_{L^\infty(Q_{32C_*\delta}(h_{k}))}&\leq \norm{\left|\frac{T'}{T}\right|+\frac{q''}{q'}}_{L^\infty(Q_{32C_*\delta}(h_{k}))}\\
\label{bd:log}&\lesssim \norm{h^{-1}}_{L^\infty(Q_{32C_*\delta}(h_{k}))}\leq \frac{\tilde{C}_0}{\mathsf{C}_0\delta},
\end{align}
where in the last inequality we used $q(h)=h^2$, that $T'/T$ bounded and $\mathsf{C}_0$ is the constant involved in the definition \eqref{def:Bdelta}. Hence, taking $\mathsf{C}_0$ sufficiently large we infer
\begin{align}
\label{bd:Ek1}
 \int_{\boldsymbol{\cE}_{k,\lambda,\delta}\cap \mathcal{Q}_{\delta}^c}|f|^2\dd x\leq \frac38 \norm{f}^2_{L^2(\mathcal{Q}_{32C_*\delta}(h_{k}))}+\tilde{C}_1 \delta \norm{f}_{L^2(\mathcal{Q}_{32C_*\delta}(h_{k}))}\norm{\nabla f}_{L^2(\mathcal{Q}_{32C_*\delta}(h_{k}))}.\end{align}
Using the bound above, in account of \eqref{eq:disjoint}, we can consider the union over $k$ and prove that  
\begin{align}
\int_{\cup_{k\in \ZZ\setminus \{0\}}\boldsymbol{\cE}_{k,\lambda,\delta}\cap \mathcal{Q}_{\delta}^c}|f|^2\dd x\leq \frac38\norm{f}^2+\tilde{C}_1\delta \norm{f}\norm{\nabla f}.
\end{align}
Using again \eqref{bd:ReL0}, we conclude that
\begin{align}
\int_{\cup_{k\in \ZZ\setminus \{0\}}\boldsymbol{\cE}_{k,\lambda,\delta}\cap \mathcal{Q}_{\delta}^c}|f|^2\dd x\leq \frac12\norm{f}^2+\tilde{C}_3\frac{\delta^2}{\nu} \norm{\cL_\lambda f}\norm{f}.
\end{align}
Combining the bound above with \eqref{bd:Bdelta}, the claim \eqref{bd:claimE} is proved.

\medskip 
\noindent
$\circ$ \textbf{Bounds away from the level sets:}  Let us introduce the key Fourier multiplier needed to exploit the effects of the transport operator. We define $\cX \colon L^2(M) \to L^2(M)$ to be an operator with Fourier symbol $\chi(k,h)$ (with the Fourier notation introduced in Section \ref{sec:Fourier}), namely 
 \begin{equation}
 \label{def:chi}
 \widehat{((\cX f) \circ \Phi)}(k, h) = \chi(k, h) \widehat{f \circ \Phi}(k, h).
 \end{equation}
 We choose $\chi \colon \Z \times I\to \RR$ to be a function (which can be explicitly constructed) satisfying the following properties: 
\begin{enumerate}
\item \label{chi:inequality} $\chi ( k, h) \operatorname{sign}\left( k\Omega(h) - {\lambda} \right) \geq 0$ for any $h$ and $k \neq 0$.
\item \label{chi-derivative} $\norm{\chi}_{L^\infty}\leq 1$ and $\norm{\de_h\chi}_{L^\infty} \lesssim \delta^{-1}$.
\item  \label{chi-sign} $ \chi(k, h) = \operatorname{sign}\left( k\Omega(h) - {\lambda} \right)$ for any $h \in \boldsymbol{E}_{k, \lambda, \delta}^c$ and for any $k \neq 0$.
\item \label{chi-zero}$\chi (0 , \cdot) \equiv 0$.
\end{enumerate}
The key properties of $\cX$ are encoded in its definition. In particular, we see that we can apply the case a) in Lemma \ref{lemma:operator-estimates} with $N=\delta^{-1}$. 

We can now proceed with the proof. Integrating by parts, we have
 \begin{align} \label{eq:proof-identity}
  \Im \langle \cL_{ \lambda} f , \cX f \rangle = \Im \langle u \cdot \nabla f - i \lambda f, \cX f \rangle+ \nu \Im \langle \nabla f, \nabla (\cX f)  \rangle:=\cI_1+\cI_2.
 \end{align}
 To control $\cI_1$, we exploit the change of variables introduced in Section \ref{sec:rescaled-action-angle} and the Fourier expansion in Section \ref{sec:Fourier}. In particular,  we have
 \begin{align}
 \cI_1 &=  \Im \int_I \sum_{k\neq 0} i(k\Omega(h)-\lambda)\chi(k,h)|\widehat{f\circ \Phi}|^2(k,h) q(h)T(h)\dd h\\
 &\geq\int_{E_{\lambda,\delta}^c} \sum_{k\neq 0} |k|\left|\Omega(h)-\frac{\lambda}{k}\right||\widehat{f\circ \Phi}|^2(k,h) q(h)T(h)\dd h\\
\label{bd:lowI10} &\geq \delta^m \int_{M\setminus{\cE}_{\lambda,\delta}} |f|^2\dd x,
 \end{align}
 where in the last inequality we used that $h\notin E_{k,\lambda,\delta}$ (see \eqref{def:Eklambda}) and we undid the change of variables.
 
 To control $\cI_2$, by Cauchy-Schwarz and case a) in Lemma \ref{lemma:operator-estimates}, we have 
 \begin{equation}
 \label{bd:I2}
 |\cI_2|\leq \nu \norm{\nabla f}\norm{\nabla (\cX f)}\lesssim \nu\norm{\nabla f}^2+\frac{\nu}{\delta}\norm{\nabla f}\norm{f}
 \end{equation}
Since $|\chi|\leq 1$, we also know that 
 \begin{equation}
 \label{bd:claimW}
\norm{\cX f}\leq \norm{f}.
 \end{equation}
Hence, combining \eqref{eq:proof-identity}, \eqref{bd:lowI10} and \eqref{bd:I2} we get 
 \begin{align}
  \int_{M\setminus \cE_{\lambda,\delta}} |f|^2\dd x&\leq \frac{1}{\delta^{m}} \cI_1\leq \frac{1}{\delta^{m}}(\norm{\cL_\lambda f}\norm{\cX f}+|\cI_2|)\\
  &\lesssim \frac{1}{\delta^{m}}\left(\norm{\cL_\lambda f}\norm{ f}+\nu \norm{\nabla f}^2+\frac{\nu}{\delta}\norm{\nabla f} \norm{f}\right),\\
   &\leq \frac{1}{\delta^{m}}\left(\tilde{C}_4\norm{\cL_\lambda f}\norm{ f}+\tilde{C}_5\delta_0^{-(2+m)}\nu \norm{\nabla f}^2+\frac{\delta_0^{2+m}}{4}\frac{\nu}{\delta^2} \norm{f}^2\right)\\
   \label{bd:M-E}&\leq \frac{1}{\delta^m}\left(\tilde{C}_6\norm{\cL_\lambda f}\norm{ f}+\frac{\delta_0^{2+m}}{4}\frac{\nu}{\delta^2}\norm{f}^2\right),
 \end{align}
 where in the last inequality we used \eqref{bd:ReL0} and $\delta_0$ is a small fixed constant (with $\tilde{C}_6$ depending on $\delta_0$). 
 
 \medskip
 \noindent
 $\circ$ \textbf{Concluding the proof.} It remains to optimize the choice of $\delta$ in order to conclude the proof. From \eqref{bd:M-E}, we can only hope to absorb the last term in that inequality if we choose $\delta$ such that 
 \begin{equation}
 \label{def:choicedelta}
 \frac{\nu}{\delta^2}\simeq \delta^m\qquad \Longrightarrow \qquad \delta:=\delta_* \nu^{\frac{1}{m+2}}.
 \end{equation}
 Therefore, combining \eqref{bd:M-E} with \eqref{bd:claimE} and the choice of $\delta$, we finally arrive at 
 \begin{align}
 \norm{f}^2&=\int_{M\setminus{\cE}_{\lambda,\delta}} |f|^2\dd x+\int_{{\cE}_{\lambda,\delta}} |f|^2\dd x\leq \tilde{C}_7 \nu^{-\frac{m}{m+2}}\norm{\cL_\lambda f}\norm{ f}+\frac34\norm{f}^2,
 \end{align}
 which implies 
 \begin{equation}
 \norm{\cL_\lambda f}\geq \frac{1}{4\tilde{C}_7} \nu^{\frac{m}{m+2}} \norm{f}
 \end{equation}
 whence proving \eqref{bd:pseudolambda} for  $0<\delta_*:=1/(4\tilde{C}_7)\ll1 $ which can be explicitly computed.

 \subsubsection{\textbf{Case}  $\boldsymbol{| \lambda | > \gamma_0\delta^{-1}}$} In this regime, we are heuristically studying the high-$k$-frequency case separately. Indeed, whenever $|\Omega(h)-\lambda/k|\ll 1$ and $\Omega(h)\leq \overline{C}$, we necessarily have $|\lambda/k|\lesssim 1$ and therefore $|k|\gtrsim \delta^{-1}$. With our choice of $\delta$ in \eqref{def:choicedelta}, we would then have $ |k|\gg \nu^{-\frac{1}{m+2}}$, which is a situation where the dissipation  is expected to be much stronger. 
 \begin{remark}
 For shear or radial flows, the $k$-by-$k$ pseudospectral estimate is of order $\nu^{\frac{m}{m+2}}|k|^{\frac{2}{m+2}}$, whereas the standard heat equation estimate would give $\nu k^2$. These two quantities are equal when $|k|=\nu^{-\frac{1}{m+1}}\gg \nu^{-\frac{1}{m+2}}$. Therefore, we do not expect that in this regime the dissipation is already the dominant effect.
 \end{remark}
From a practical point of view, we simply change the definition of \eqref{def:Eklambda} as follows: 
\begin{equation}
\label{def:Eklambdahigh}
E_{k, \lambda, \delta} \coloneqq \left\{ (\theta, h) \in \T \times I : \left|\Omega(h) - \frac{\lambda}{k} \right| < \frac{\delta^{m-1}}{\mathsf{C}_1|k|} \right\}, \qquad \cE_{k,\lambda,\delta}=\Phi( E_{k, \lambda, \delta}),
\end{equation}
where $\mathsf{C}_1>0$ is a sufficiently large constant specified below. Then, instead of using a $\delta$-neighbourhood of the set above, we use a $|k|^{-1}$-neighbourhood, namely 
   \begin{equation}
   \label{def:Eklambdathickhigh}
 \boldsymbol{E}_{k, \lambda, \delta} =\{h \in I \, : \, \mathrm{dist}(h,E_{k,\lambda,\delta})<|k|^{-1}\}, \qquad \boldsymbol{\mathcal{E}}_{k, \lambda, \delta}=\Phi( \boldsymbol{E}_{k, \lambda, \delta}).
 \end{equation}
The set $Q_{\mathsf{C}_0\delta}$ in \eqref{def:Bdelta} remains the same whereas \eqref{def:Elambda} is redefined accordingly. We have the following.
 \begin{lemma} \label{lemma:subsethigh}
Let $|\lambda|> \gamma_0\delta^{-1}$, $k\neq 0$ be such that $E_{k,\lambda,\delta}\neq \emptyset$. Then, there exists a constant $\mathsf{C}_*>0$, independent of $\lambda,k$, such that for any point $(h_{k,\lambda,\delta},\bar{\theta})\in E_{k,\lambda,\delta}\cap Q_{\delta}^c$, one has
\begin{equation}
\label{eq:incl1}
\boldsymbol{E}_{k,\lambda,\delta}\cap Q^c_{\delta}\subset Q_{\mathsf{C}_*/|k|}(h_{k,\lambda,\delta}).
\end{equation}
Moreover, for any $k\neq k'$, 
\begin{equation}
\label{eq:disjoint1}
 Q_{32\mathsf{C}_*/|k|}(h_{k,\lambda,\delta})\cap Q_{32\mathsf{C}_*/|k|}(h_{k',\lambda,\delta})=\emptyset.
\end{equation}
\end{lemma}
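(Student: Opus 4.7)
The proof would proceed in parallel to that of Lemma~\ref{lemma:subset}, but with the width of $E_{k,\lambda,\delta}$ now being $\delta^{m-1}/(\mathsf{C}_1|k|)$ in place of $\delta^m$, and the thickening by $|k|^{-1}$ replacing the $\delta$-thickening. The crucial preliminary observation is that, since $\Omega$ takes values in a bounded interval $[\underline c,\overline C]$ by \ref{item:ClassACond3}, the non-emptiness of $E_{k,\lambda,\delta}$ in the present regime $|\lambda|>\gamma_0\delta^{-1}$ forces $|k|\simeq|\lambda|$, with constants depending only on $H$, provided $\mathsf{C}_1$ is chosen sufficiently large. In particular $|k|\gtrsim \delta^{-1}$, so that the $|k|^{-1}$-thickening is strictly smaller than $\delta/\gamma_0$, which (for $\mathsf{C}_0$ large enough) will ensure that any point in $\boldsymbol{E}_{k,\lambda,\delta}\cap Q_\delta^c$ is within $|k|^{-1}$ of a point still in $E_{k,\lambda,\delta}\cap Q_\delta^c$.

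For \eqref{eq:incl1}, I would fix $(h_{k,\lambda,\delta},\bar\theta)\in E_{k,\lambda,\delta}\cap Q_\delta^c$ and take any $h\in\boldsymbol{E}_{k,\lambda,\delta}\cap Q_\delta^c$, so that there exists $h'\in E_{k,\lambda,\delta}$ with $|h-h'|<|k|^{-1}$. By the preliminary observation, we may assume $h'\in Q_\delta^c$ (and on the same connected component as $h_{k,\lambda,\delta}$). Invoking \ref{item:ClassACondOnDerivative} to get $|\Omega'|\geq\tilde c_*\delta^{m-1}$ on $Q_\delta^c$, and using $|\Omega(h')-\Omega(h_{k,\lambda,\delta})|\leq 2\delta^{m-1}/(\mathsf{C}_1|k|)$ from the definition of $E_{k,\lambda,\delta}$, the mean value theorem yields $|h'-h_{k,\lambda,\delta}|\leq 2/(\tilde c_*\mathsf{C}_1|k|)$. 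The triangle inequality then gives $|h-h_{k,\lambda,\delta}|\leq (2/(\tilde c_*\mathsf{C}_1)+1)/|k|$, proving \eqref{eq:incl1} with $\mathsf{C}_*:=2/(\tilde c_*\mathsf{C}_1)+1$.

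For \eqref{eq:disjoint1}, I would mimic the corresponding estimate in Lemma~\ref{lemma:subset}. On the one hand the mean value theorem gives $|\Omega(h_k)-\Omega(h_{k'})|\leq C_1|h_k-h_{k'}|$ with $C_1=\sup_I|\Omega'|$; on the other, the reverse triangle inequality combined with the definition of $E_{k,\lambda,\delta}$ produces
\[
|\Omega(h_k)-\Omega(h_{k'})|\;\geq\;\frac{|\lambda|\,|k-k'|}{|k|\,|k'|}\;-\;\frac{\delta^{m-1}}{\mathsf{C}_1}\Big(\frac{1}{|k|}+\frac{1}{|k'|}\Big).
\]
Since $|k-k'|\geq 1$ and $|k|\simeq|k'|\simeq|\lambda|$, the leading term is bounded below by $c_0/|k|$ for an explicit $c_0$ depending only on $\underline c,\overline C$, while the error term is of order $\delta^{m-1}/(\mathsf{C}_1|k|)$ and can be absorbed into half of the leading term by choosing $\mathsf{C}_1$ large enough (uniformly in $\delta\leq 1$ and in $k,k'$, using $\delta^{m-1}\leq 1$). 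Combining with the MVT upper bound yields $|h_k-h_{k'}|\geq c_0/(2C_1|k|)$, which exceeds $64\mathsf{C}_*/|k|$ after a final enlargement of $\mathsf{C}_1$ — making $\mathsf{C}_*$ as close to $1$ as needed — establishing \eqref{eq:disjoint1}.

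The main delicacy, as in the previous case, lies in the simultaneous tuning of the universal constants $\mathsf{C}_0,\mathsf{C}_1,\mathsf{C}_*$ and $\gamma_0$ so that the containment \eqref{eq:incl1} and the disjointness \eqref{eq:disjoint1} are compatible, while keeping consistency with the bounds around the critical point (like \eqref{bd:log}). The entire structural argument is identical to the low-$|\lambda|$ case; the new scaling $|k|^{-1}$ of the thickening is precisely matched to the $1/|k|$ decay of the distance between distinct level-set neighbourhoods in this high-$|\lambda|$ regime, so only the book-keeping of constants genuinely changes.
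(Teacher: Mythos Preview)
Your proposal is correct and follows essentially the same approach as the paper: both argue by direct analogy with Lemma~\ref{lemma:subset}, replacing the width $\delta^m$ by $\delta^{m-1}/(\mathsf{C}_1|k|)$ and the $\delta$-thickening by $|k|^{-1}$, using the MVT together with $|\Omega'|\gtrsim\delta^{m-1}$ on $Q_\delta^c$ for the inclusion, and the reverse triangle inequality plus $|k|\simeq|k'|\simeq|\lambda|$ for the disjointness, with the constant $\mathsf{C}_1$ chosen large to absorb the error terms. Your explicit acknowledgement of the delicacy in tuning $\mathsf{C}_0,\mathsf{C}_1,\mathsf{C}_*$ simultaneously matches the paper's (terse) treatment, which simply writes ``choosing $\mathsf{C}_1$ sufficiently large we argue as done to prove \eqref{eq:disjoint}.''
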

\begin{proof}
The proof follows by rescaling the bounds in Lemma \ref{lemma:subset}, namely replace all the occurences of $\delta^m$ with $\delta^{m-1}/(\mathsf{C}_1|k|)$ and change accordingly the universal constants involved. In fact, the proof of \eqref{eq:incl1} is the same of \eqref{eq:incl} with a possibly different $\mathsf{C}_*$ compared to $C_*$ in Lemma \ref{lemma:subset}. Then, we only use $|\lambda|\leq \gamma_0\delta^{-1}$ to prove the disjointness property \eqref{eq:disjoint}. In this case, using the same notation in the proof of Lemma \ref{lemma:subset}, the inequality \eqref{bd:lk0} becomes 
\begin{equation} \label{eq:period-bdd-used}
|\Omega(h_k)-\Omega(h_{k'})|\gtrsim \frac{|\lambda|}{|k|}\frac{|k-k'|}{|k|}-2\frac{\delta^{m-1}}{\mathsf{C}_1|k|}\geq \frac{1}{2|k|}\left(\underline{c}-\frac{4\delta^{m-1}}{\mathsf{C}_1}\right).
\end{equation} 
Thus, choosing $\mathsf{C}_1$ sufficiently large we argue as done to prove \eqref{eq:disjoint} and prove \eqref{eq:disjoint1}.
\end{proof}
For the Fourier multiplier, we define $\cX$ as in \eqref{def:chi}, but we choose $\chi$ satisfying the properties \eqref{chi:inequality}, \eqref{chi-sign}, \eqref{chi-zero} and we replace \eqref{chi-derivative} with 
\begin{enumerate}
\item[(2)] \label{chi-derivative1} $\norm{\chi}_{L^\infty}\leq 1$ and $\norm{\de_h\chi}_{L^\infty} \lesssim |k|$.
\end{enumerate}
This is because now we are working with a $|k|^{-1}$-neighbourhood of $E_{k,\lambda,\delta}$. Hence, we are in case b) of Lemma \ref{lemma:operator-estimates}, meaning that we will control $\|\nabla (\chi f)\|$ with $\|\nabla f\|$. This apparent loss of optimality is compensated by the fact that now the thin sets are of size $\delta^{m-1}/|k|\ll \delta^{m}$ for all $|k|$ sufficiently large.
 
 We are now in the position of concluding the proof. Thanks to the properties in Lemma \ref{lemma:subsethigh} and the fact that $|k|\simeq |\lambda|\gtrsim \delta^{-1}$, we can repeat the proof of \eqref{bd:claimE} and show that there exists a universal constant $C_{l_1}>0$ such that 
 \begin{equation}
 \label{bd:claimE1}
 \int_{\mathcal{E}_{\lambda,\delta} } |f|^2 \, dx   \leq  C_{l_1}\frac{\delta^2}{\nu}\| \cL_{\lambda} f \|\norm{f}+ \frac{1}{2} \| f \|^2 \,.
\end{equation} 
For the bounds away from the level sets, we consider the splitting in \eqref{eq:proof-identity} but we now control $\cI_1$ as follows:
 \begin{align}
 \cI_1 &=\Im\jap{(u\cdot \nabla-i\lambda)f,\cX f}=  \Im \int_I \sum_{k\neq 0} i(k\Omega(h)-\lambda)\chi(k,h)|\widehat{f\circ \Phi}|^2(k,h) q(h)T(h)\dd h\\
 &\geq\int_{E_{\lambda,\delta}^c} \sum_{k\neq 0} |k|\left|\Omega(h)-\frac{\lambda}{k}\right||\widehat{f\circ \Phi}|^2(k,h) q(h)T(h)\dd h\\
\label{bd:lowI1} &\geq \delta^{m-1} \int_{M\setminus{\cE}_{\lambda,\delta}} |f|^2\dd x.
 \end{align}
 Therefore, similarly as done before in the case $| \lambda | \leq \gamma_0 \delta^{-1}$, using 
 the identity \eqref{eq:proof-identity}, the bound of the Fourier multiplier $\cX$ \eqref{bd:nablaWf2} in Lemma \ref{lemma:operator-estimates} and the identity \eqref{bd:ReL0}, we get 
  \begin{align}
  \label{bd:M-E1}  \int_{M\setminus \cE_{\lambda,\delta}} |f|^2\dd x&\leq \frac{1}{\delta^{m-1}} \tilde{C}_{6_1}\norm{\cL_\lambda f}\norm{ f}.
 \end{align}
Combining together \eqref{bd:claimE} and \eqref{bd:M-E1} with the choice made before $\delta = \nu^{\frac{1}{m+2}}$, we get 
 \begin{align}
 \norm{f}^2\leq \tilde{C}_{7_1}(\delta \nu^{-\frac{m}{m+2}}  + \nu^{-\frac{m}{m+2}})\norm{\cL_\lambda f}\norm{ f}+\frac34\norm{f}^2,
 \end{align}
 which  implies 
 \begin{equation}
 \norm{\cL_\lambda f}\geq \frac{1}{8\tilde{C}_{7_1}} \nu^{\frac{m}{m+2}} \norm{f}.
 \end{equation}
Therefore  the bound \eqref{bd:pseudolambda} holds true with  $0<\delta_*:=1/\max\{4\tilde{C}_7,8\tilde{C}_{7_1})\}\ll1 $, whence concluding the proof of Proposition \ref{prop:main}. \qed

\section{Examples}
\label{sec:examples}
In this section, we firstly prove a proposition that allows to consider a general class of Hamiltonians satisfying our assumptions of Definition \ref{def:ClassHamiltoniansA} and Definition \ref{def:ClassHamiltoniansP}. Then, we provide two examples where the average along the streamline has a non-trivial evolution, implying in particular that the commutator $[\Delta, P_0]$ is non-trivial.

\subsection{Perturbation of non-degenerate radial Hamiltonians}
\label{sec:Radial}
\begin{proposition}\label{prop:QuantitativePerturbationAreInClassP}
 Let $B_R(0) \subset \R^2$ be the ball of radius $R > 0$ centered in the origin. Let $H \colon B_R(0) \to \R$ be a smooth radial Hamiltonian with a non-degenerate elliptic point in $0$ and 
 $$\lambda I \leq D^2 H(x) \leq \Lambda I \quad \forall x \in B_R(0).$$ 
 Let $f \in C^{\infty}(B_R(0))$ such that $\nabla f(0) = 0$ and $f$ is constant on $\partial B_R(0)$.
 Then there exists a constant $C$ depending only on $\lambda$, $\Lambda$ and $\| f \|_{C^{3}}$ such that for all $\repsilon < \frac{1}{2C}$ the Hamiltonian $H_{\repsilon} \colon B_R(0) \to \R$ defined by
 \[
  H_{\repsilon} = H + \repsilon f
 \]
 belongs to the class $(P_{\repsilon})$ with parameter $\eps = C \repsilon$.
\end{proposition}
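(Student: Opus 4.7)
The plan is to verify in turn the six conditions characterizing class $(\mathrm{P}_{\repsilon})$. The regularity \ref{item:ClassACond1} is immediate. For \ref{item:ClassACond2}, since $\nabla f(0)=0$ the origin remains a critical point of $H_{\repsilon}$. Integrating $D^2 H \geq \lambda I$ along radii gives $|\nabla H(x)| \geq \lambda |x|$, and the same vanishing hypothesis on $\nabla f$ together with a $C^2$-bound gives $|\nabla f(x)| \leq \|f\|_{C^2}|x|$. Hence $|\nabla H_{\repsilon}(x)| \geq (\lambda - \repsilon \|f\|_{C^2})|x|$, which is positive for $x \neq 0$ and $\repsilon$ small, and $D^2 H_{\repsilon}(0)$ remains positive definite, yielding a unique non-degenerate elliptic point. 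For \ref{item:ClassACond3}, the radial period $T_0(h) = 2\pi r(h)/\tilde H'(r(h))$ satisfies $T_0 \in [2\pi/\Lambda, 2\pi/\lambda]$ by the two-sided spectral bounds on $D^2 H$, and $T_0 \in C^1$ with a uniform bound on $T_0'$. Smooth dependence on $\repsilon$ in the integral defining $T_{\repsilon}$ then gives $\|T_{\repsilon} - T_0\|_{C^1} \lesssim \repsilon$, preserving the bounds.

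The verification of \ref{item:ClassPCond4} exploits the fact that in the radial case $|\nabla H|^2 = (\tilde H'(r))^2$ is \emph{exactly} constant on circles. The streamlines of $H_{\repsilon}$ are $O(\repsilon)$ perturbations of circles: if $y_1, y_2$ lie on a common streamline of $H_{\repsilon}$, then $\tilde H(|y_1|) - \tilde H(|y_2|) = -\repsilon(f(y_1)-f(y_2))$. The left-hand side dominates $\lambda r_\ast \,||y_1|-|y_2||$ with $r_\ast = \min(|y_1|, |y_2|)$, while using $\nabla f(0)=0$ the right-hand side is at most $\repsilon \|f\|_{C^2}\max(|y_i|)|y_1-y_2|$. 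Combined with the trivial bound $|y_1-y_2| \lesssim \max(|y_i|)$, we deduce $||y_1|-|y_2|| \lesssim (\repsilon/\lambda)\max(|y_i|)$. The mean-value theorem on $(\tilde H')^2$ together with $(\tilde H'(r))^2 \geq \lambda^2 r^2$ then yields
\begin{equation*}
\big| |\nabla H|^2(y_1) - |\nabla H|^2(y_2)\big| \lesssim \repsilon |\nabla H|^2(y_i),
\end{equation*}
and combining with the direct first-order expansion $|\nabla H_{\repsilon}|^2 - |\nabla H|^2 = O(\repsilon \, |\nabla H|^2)$ proves \ref{item:ClassPCond4} with parameter $C\repsilon$.

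For \ref{item:ClassPCond5}, we construct rescaled action-angle variables for $H_{\repsilon}$ as a $C^1$-perturbation of the radial ones $\Phi_0(\theta, h) = r(h)(\cos 2\pi\theta, \sin 2\pi\theta)$, for which $\partial_\theta \Phi_0 \cdot \partial_h \Phi_0 \equiv 0$. Using formula \eqref{def:Phi}, we set $\Phi_{\repsilon}(\theta, h) = X_{\repsilon}(\theta T_{\repsilon}(h), z_{\repsilon}(h))$ where $X_{\repsilon}$ is the flow of $\nabla^\perp H_{\repsilon}$ and $z_{\repsilon}$ solves $\dot z_{\repsilon} = q'(h) \nabla H_{\repsilon}/|\nabla H_{\repsilon}|^2$ starting from a point close to $z_0(\tilde h)$. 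Smooth parameter dependence, plus Gr\"onwall on the variational equation for $X_{\repsilon}$ over the compact interval $[0, \sup_h T_{\repsilon}(h)]$, gives $\|\Phi_{\repsilon} - \Phi_0\|_{C^1_{\mathrm{loc}}} \leq C\repsilon$ with $C$ depending only on $\lambda, \Lambda, \|f\|_{C^3}$. A first-order Taylor expansion of the dot product then establishes \eqref{eq:transversality} away from the elliptic point.

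The main technical obstacle is to make \eqref{eq:transversality} uniform \emph{up to} the elliptic point, where $|\partial_\theta \Phi_0|\sim r(h)\to 0$ while $|\partial_h \Phi_0|$ stays of order one. A crude $C^1$-bound $\|\Phi_{\repsilon}-\Phi_0\|_{C^1}\leq C\repsilon$ would give $\partial_\theta \Phi_{\repsilon}\cdot \partial_h \Phi_{\repsilon} = O(\repsilon)$, which is too large compared to the target $\repsilon\,|\partial_\theta \Phi_{\repsilon}||\partial_h \Phi_{\repsilon}|\sim \repsilon r$ when $r\ll 1$. The crucial improvement comes from the observation that the first-order correction $\Phi^{(1)}:=\partial_{\repsilon}\Phi_{\repsilon}|_{\repsilon=0}$ vanishes on $\{h=0\}$, since both $\Phi_{\repsilon}$ and $\Phi_0$ collapse this set to the elliptic point. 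Consequently $\Phi^{(1)}(\theta, h) = O(h)$ in a neighborhood of the elliptic point, $\partial_\theta \Phi^{(1)} = O(h) = O(r)$, and the dot product refines to $\partial_\theta\Phi_{\repsilon}\cdot \partial_h\Phi_{\repsilon} = O(\repsilon r)$, yielding \ref{item:ClassPCond5} with parameter $C\repsilon$ on the whole of $B_R(0)$.
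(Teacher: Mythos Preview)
Your approach is correct in spirit and genuinely different from the paper's. The paper constructs the perturbed rescaled action-angle map $\Phi_{\repsilon}$ \emph{explicitly} via the implicit function theorem in polar coordinates: it first defines $r(\repsilon,h,\varphi)$ solving $H_{\repsilon}(r,\varphi)=h^2$, then sets $\Phi_{\repsilon}(\theta,h)=(r\cos,\,r\sin)$ with a reparametrisation of the angle. This gives closed formulas for all first and second derivatives, from which the crucial scaled bounds $|\partial_{\repsilon\theta}\Phi_{\repsilon}|\lesssim h$ and $|\partial_{\repsilon h}\Phi_{\repsilon}|\lesssim 1$ are read off directly. Both \ref{item:ClassPCond4} and \ref{item:ClassPCond5} are then deduced from these same bounds. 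By contrast, you verify \ref{item:ClassPCond4} by a self-contained streamline argument (no action-angle map needed) and build $\Phi_{\repsilon}$ from the general flow-based formula \eqref{def:Phi}, arguing by perturbation in $\repsilon$. Your route is more conceptual; the paper's is more computational but makes the near-origin scaling transparent.

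Two places in your write-up deserve more care. First, in your \ref{item:ClassPCond4} argument, passing from $\lambda r_\ast\,||y_1|-|y_2||\le 2\repsilon\|f\|_{C^2}\max(|y_i|)^2$ to $||y_1|-|y_2||\lesssim\repsilon\max(|y_i|)$ implicitly uses $\max(|y_i|)/r_\ast\lesssim 1$, which is part of what you are proving. This is easily closed by a bootstrap (or by comparing $\tilde H(|y_i|)\sim|y_i|^2$ instead of using the mean value theorem on $\tilde H'$), but it should be said. Second, and more substantively, your key near-origin claim ``$\Phi^{(1)}(\theta,0)=0$ hence $\Phi^{(1)}=O(h)$ hence $\partial_\theta\Phi^{(1)}=O(h)$'' is not automatic: vanishing at $h=0$ gives $O(h)$ only if $\partial_h\Phi^{(1)}=\partial_{\repsilon h}\Phi_{\repsilon}|_{\repsilon=0}$ is bounded, and likewise the $O(\repsilon^2)$ remainder must be shown to be $O(\repsilon^2 h)$ uniformly. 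The cleanest fix in your framework is to differentiate $\partial_\theta\Phi_{\repsilon}=T_{\repsilon}\nabla^\perp H_{\repsilon}(\Phi_{\repsilon})$ in $\repsilon$, obtaining the linear ODE $\partial_\theta(\partial_{\repsilon}\Phi_{\repsilon}) = T_{\repsilon}\,D\nabla^\perp H_{\repsilon}(\Phi_{\repsilon})\,\partial_{\repsilon}\Phi_{\repsilon} + O(h)$ in $\theta\in\T$ (the forcing is $O(h)$ because $|\nabla^\perp H_{\repsilon}(\Phi_{\repsilon})|\sim h$ and $|\nabla^\perp f(\Phi_{\repsilon})|\lesssim|\Phi_{\repsilon}|\sim h$), and then run Gr\"onwall once you check the initial value $\partial_{\repsilon}z_{\repsilon}(h)$ is itself $O(h)$. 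This is precisely the content of the paper's Step~3, carried out there via explicit formulas; in your setting it requires the extra ODE argument.
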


\begin{proof}[Proof of Proposition~\ref{prop:QuantitativePerturbationAreInClassP}]
 Without loss of generality we may assume that $H(0) = 0$ and $f(0) = 0$.
We use $r, \varphi$ to denote the standard polar coordinates in $B_R(0)$.
Recall that $H_{\repsilon} \colon B_R(0) \to \R$ is defined as
\[
 H_{\repsilon}(r, \varphi) = H(r) + \repsilon f (r, \varphi).
\]
The proof is divided into 4 steps. In the first step, we define a useful function by means of the implicit function theorem. Then in the second step, we adapt this mapping to obtain a so-called rescaled action-angle change of variables. The third step is dedicated to estimate two second order derivatives of the mapping constructed in Step 2. Finally, we prove the Proposition in Step 4.

\medskip
\noindent
\textbf{Step 1: Definition of function $\pmb{r}$.} 
Let
\[
 \repsilon_0 = \frac{\lambda}{2 \| f \|_{C^3}}.
\]
and define 
$$X = \Big\{ (\repsilon, h, \varphi) \in (-\repsilon_0, \repsilon_0) \times \R \times \T_{2 \pi} : h \in H \big( B_R(0) \setminus \{ 0 \} \big) \Big\}.$$
We claim that there exists a map $r \colon X \to B_R(0)$ such that $H_{\repsilon}(r(\repsilon, h, \varphi), \varphi) = h^2$ for all $(\repsilon, h, \varphi) \in X$ and
\begin{align}
 \partial_{\repsilon} r(\repsilon, h, \varphi) &= - \dfrac{f(r(\repsilon, h, \varphi), \varphi)}{\partial_{r} H_{\repsilon}(r(\repsilon, h, \varphi), \varphi)}; \label{eq:RDifferentiatedByEps} \\
 \partial_{h} r(\repsilon, h, \varphi) &= \dfrac{2h}{\partial_{r} H_{\repsilon}(r(\repsilon, h, \varphi), \varphi)}; \label{eq:RDifferentiatedByH} \\
 \partial_{\varphi} r(\repsilon, h, \varphi) &= - \dfrac{\partial_{\varphi}H_{\repsilon}(r(\repsilon, h, \varphi), \varphi)}{\partial_{r} H_{\repsilon}(r(\repsilon, h, \varphi), \varphi)} \label{eq:RDifferentiatedByVarphi}
\end{align}
for all  $(\repsilon, h, \varphi) \in X$.
In order to build such a map, we define $G \colon X \times (0, R] \to \R$ by
\[
 G(\repsilon, h, \varphi, r) = H_{\repsilon}(r, \varphi) - h^2 = H(r) + \repsilon f (r, \varphi) - h^2.
\]
Note that 
$$\partial_r G(\repsilon, h, \varphi, r) = \partial_r H(r) + \repsilon \partial_r f (r, \varphi) \geq \frac{\lambda}{2} r$$
for all $(\repsilon, h, \varphi, r) \in X \times (0, R]$.
Let $h_0 \in I$ be arbitrary and let $r_0 \in (0,R]$ such that $H(r_0) = h_0^2$. Then $G(0, h_0, 0, r_0) = 0$.
Hence by the implicit function theorem there exists a relatively open neighbourhood $U \subset X$ of $(0, h_0, 0)$ and a function $r \colon U \to (0,R]$ such that
\[
 G(\repsilon, h, \varphi, r(\repsilon, h, \varphi) ) = 0
\]
and \eqref{eq:RDifferentiatedByEps}, \eqref{eq:RDifferentiatedByH} and \eqref{eq:RDifferentiatedByVarphi} hold for $(\repsilon, h, \varphi) \in U$.
This function $r$ only proves the desired result locally. 
At this stage, we notice that whenever $r$ exists we have
\begin{equation}\label{eq:TheFunctionrIsSimilarToh}
 \frac{\lambda}{2} h \leq |r(\eps, h, \varphi)| \leq 2 \Lambda h.
\end{equation}
As a consequence, thanks to the fact that \eqref{eq:RDifferentiatedByEps}, \eqref{eq:RDifferentiatedByH} and \eqref{eq:RDifferentiatedByVarphi} hold wherever $r$ is defined we obtain that
\begin{equation}\label{eq:TheFunctionrHasBoundedDerivatives}
 |\partial_{\eps} r(\eps, h, \varphi)|, \, |\partial_{h} r(\eps, h, \varphi)|, \, |\partial_{\varphi} r(\eps, h, \varphi)| \leq C
\end{equation}
for some constant $C$ depending only on $\lambda$, $\Lambda$ and $\| f \|_{C^3}$.
The fact that $r$ can be defined on the whole set $X$ follows from the fact that whenever $r(\repsilon, h, \varphi)$ is defined it is unique combined with the fact that whenever $r$ is defined on a relatively open set $U \subset X$, then it can be extended to it closure $\overline{U}$ in $X$. Let us prove this last fact. Let $(\repsilon, h, \varphi) \in \overline{U}$. Then there is a sequence $\{ (\repsilon, h, \varphi) \}_{\ell = 1}^{\infty} \subset U$ such that
\[
 (\repsilon_{\ell}, h_{\ell}, \varphi_{\ell}) \to (\repsilon, h, \varphi) \text{ as $\ell \to \infty$.}
\]
We set $r_{\ell} = r(\repsilon_{\ell}, h_{\ell}, \varphi_{\ell})$. 
Thanks to \eqref{eq:TheFunctionrHasBoundedDerivatives}, we deduce that $r_{\ell}$ converges to a unique limit $r$ and $G(\eps, h, \varphi, r) = 0$. Hence, we may define $r(\eps, h, \varphi) = r$.
This is enough to conclude.

\medskip
\noindent
\textbf{Step 2: Definition of $\Phi_{\repsilon}$.}
We define the mapping $\MAP \colon [- \repsilon_0, \repsilon_0] \times \T \times I \to B_R(0)$ by
\[
 \MAP(\repsilon, \gamma, h) = (r(\repsilon, h, 2 \pi \gamma) \cos (2 \pi \gamma), r(\repsilon, h, 2 \pi \gamma) \sin (2 \pi \gamma)).
\]
Note that for each $\repsilon \in [- \repsilon_0, \repsilon_0]$, the mapping
\[
 (\gamma, h) \mapsto \MAP(\repsilon, \gamma, h)
\]
is a parametrisation of $B_R(0)$.
From \eqref{eq:RDifferentiatedByEps}, \eqref{eq:RDifferentiatedByH} and \eqref{eq:RDifferentiatedByVarphi} it follows that
\begin{align*}
\partial_{\repsilon} \MAP(\repsilon, \gamma, h) &= \partial_{\repsilon} r(\repsilon, h, 2 \pi \gamma) (\cos(2 \pi \gamma), \sin(2 \pi \gamma)) =  - \dfrac{f(r(\repsilon, h, \varphi), \varphi)}{\partial_{r} H_{\repsilon}(r(\repsilon, h, \varphi), \varphi)} (\cos(2 \pi \gamma), \sin(2 \pi \gamma)); \\
\partial_{h} \MAP(\repsilon, \gamma, h) &= \partial_{h} r(\repsilon, h, 2 \pi \gamma) (\cos(2 \pi \gamma), \sin(2 \pi \gamma)) = \dfrac{2h}{\partial_{r} H_{\repsilon}(r(\repsilon, h, \varphi), \varphi)} (\cos(2 \pi \gamma), \sin(2 \pi \gamma)); \\
 \partial_{\gamma} \MAP(\repsilon, \gamma, h) &= 2 \pi \dfrac{r(\repsilon, h, 2 \pi \gamma)}{\partial_{r} H_{\repsilon} (r(\repsilon, h, 2 \pi \gamma), 2 \pi \gamma)} \nabla^{\perp} H_{\repsilon} (\MAP(\repsilon, \gamma, h)).
\end{align*}
Moreover the period function of the Hamiltonian $H_{\repsilon}$ is given by
\begin{align}
\begin{split}\label{eq:PeriodFormula}
 T_{\repsilon}(h) &= \int_{\{ H_{\repsilon} = h^2 \}} \dfrac{1}{|\nabla H_{\repsilon}|} \, d \ell = \int_0^1 \dfrac{1}{|\nabla H_{\repsilon}(\MAP(\repsilon, \gamma, h))|} |\partial_{\varphi}\MAP(\repsilon, \gamma, h)| \, d \gamma \\
 &= 2 \pi \int_0^1 \dfrac{r(\repsilon, h, 2 \pi \gamma)}{\partial_{r} H_{\repsilon} (r(\repsilon, h, 2 \pi \gamma), 2 \pi \gamma)} \, d \gamma.
\end{split}
\end{align}
For each $\repsilon \in [- \repsilon_0, \repsilon_0]$ and $h \in I$, let $s_{\repsilon, h} \colon \T \to \T$ be the solution to the ordinary differential equation
\[
 \begin{cases}
  \partial_{\theta} s_{\repsilon, h}(\theta) = \dfrac{1}{2 \pi} T_{\repsilon}(h) \dfrac{\partial_{r} H_{\repsilon} (r(\repsilon, h, 2 \pi s_{\repsilon, h}(\theta)), 2 \pi s_{\repsilon, h}(\theta))}{r(\repsilon, h, 2 \pi s_{\repsilon, h}(\theta))}; \\
  s_{\repsilon, h}(0) = 0.
 \end{cases}
\]
Indeed, the fact that a unique solution exists follows from the Cauchy-Lipschitz theory. 
Then, for all $\repsilon \in [- \repsilon_0, \repsilon_0]$ define $\Phi_{\repsilon} \colon \T \times I \to B_R(0)$ as
\[
 \Phi_{\repsilon}(\theta, h) = \MAP(\repsilon, s_{\repsilon, h}(\theta), h).
\]
Standard computations imply
\begin{align}
\begin{split}\label{eq:PhiEpsDifferentiatedWRTTheta}
 \partial_{\theta} \Phi_{\repsilon}(\theta, h) &= T_{\repsilon}(h) \nabla^{\perp} H_{\repsilon} (\Phi_{\repsilon}(\theta, h)) \\
\end{split}
\end{align}
and
\begin{align}
  \partial_{\repsilon} \Phi_{\repsilon}(\theta, h) &= \partial_{\repsilon} \MAP(\repsilon, s_{\repsilon, h}(\theta), h) + \partial_{\gamma} \MAP(\repsilon, s_{\repsilon, h}(\theta), h) \partial_{\repsilon} s_{\repsilon, h}(\theta); \\
 \partial_{h} \Phi_{\repsilon}(\theta, h) &= \partial_{\gamma} \MAP(\repsilon, s_{\repsilon, h}(\theta), h) \partial_{h} s_{\repsilon, h}(\theta) + \partial_{h} \MAP(\repsilon, s_{\repsilon, h}(\theta), h).
\end{align}
Now observe that for all $\repsilon \in [- \repsilon_0, \repsilon_0]$, the mapping
\[
 (\theta, h) \mapsto \Phi_{\repsilon}(\theta, h)
\]
satisfies all the assumptions of a so-called rescaled action-angle change of variables with respect to the Hamiltonian $H_{\repsilon}$.

\medskip
\noindent
\textbf{Step 3: Derivation of bounds on  $\partial_{\repsilon \theta} \Phi_{\repsilon}$ and $\partial_{\repsilon h} \Phi_{\repsilon}$.}
Our goal is now to derive bounds on the following two quantities:
\[
 \partial_{\repsilon \theta} \Phi_{\repsilon} (\theta, h) \quad \text{and} \quad \partial_{\repsilon h} \Phi_{\repsilon} (\theta, h).
\]
Using the fact that
\[
 |f(r, \varphi)| = O (r^2), \, |\partial_{r} f(r, \varphi)| = O (r), \, |\partial_{rr} f(r, \varphi)| = O (1)
\]
and \eqref{eq:TheFunctionrHasBoundedDerivatives} we can prove that there exists a constant $\tilde{C}$ depending only on $\lambda$, $\Lambda$ and $\| f \|_{C^3}$ such that
\begin{equation}
 \dfrac{1}{\tilde{C}} \leq |T_{\repsilon}(h)| \leq \tilde{C}, \quad |\partial_{\repsilon} T_{\repsilon}(h)| \leq \tilde{C}, \quad |\partial_{h} T_{\repsilon}(h)| \leq \tilde{C}, \quad |\partial_{\repsilon h} T_{\repsilon}(h)| \leq \dfrac{\tilde{C}}{h}
\end{equation}
and
\begin{equation}
 \dfrac{1}{\tilde{C}} h \leq |\Phi_{\repsilon}(\theta, h)| \leq \tilde{C} h, \quad |\partial_{\repsilon} \Phi_{\repsilon}(\theta, h)| \leq \tilde{C} h, \quad |\partial_{h} \Phi_{\repsilon}(\theta, h)| \leq \tilde{C}.
\end{equation}

\bigskip
With these estimates at hand, we now derive bounds on $\partial_{\repsilon \theta} \Phi_{\repsilon} (\theta, h)$ and $\partial_{\repsilon h} \Phi_{\repsilon} (\theta, h)$. We start with $\partial_{\repsilon \theta} \Phi_{\repsilon} (\theta, h)$. From \eqref{eq:PhiEpsDifferentiatedWRTTheta}, we note that
\begin{equation}\label{eq:DerivativeEpsTheta}
\begin{split}
 \partial_{\repsilon \theta} \Phi_{\repsilon} (\theta, h) &= \partial_{\repsilon} T_{\repsilon}(h) \nabla^{\perp} H_{\repsilon}(\Phi_{\repsilon}(\theta, h)) + T_{\repsilon}(h) D \nabla^{\perp} H_{\repsilon}(\Phi_{\repsilon}(\theta, h)) \partial_{\repsilon} \Phi_{\repsilon}(\theta, h) \\
 &\qquad + T_{\repsilon}(h) \nabla^{\perp} f(\Phi_{\repsilon}(\theta, h)).
\end{split}
\end{equation}
Therefore, thanks to the Claim there exists a constant $C_1$ depending only on $\lambda$, $\Lambda$ and $\| f \|_{C^3}$ such that
\begin{align*}
 | \partial_{\repsilon \theta} \Phi_{\repsilon} (\theta, h) | &\leq C_1 h.
\end{align*}
Now we treat $\partial_{\repsilon h} \Phi_{\repsilon} (\theta, h)$.
Differentiating \eqref{eq:DerivativeEpsTheta} with respect to $h$ yields
\begin{align*}
 \partial_{\repsilon \theta h} \Phi_{\repsilon} (\theta, h) &= \partial_{\repsilon h} T_{\repsilon}(h) \nabla^{\perp} H_{\repsilon}(\Phi_{\repsilon}(\theta, h)) + \partial_{\repsilon} T_{\repsilon}(h) D \nabla^{\perp} H_{\repsilon}(\Phi_{\repsilon}(\theta, h)) \partial_{h} \Phi_{\repsilon}(\theta, h)\\
 &\qquad + \partial_{h} T_{\repsilon}(h) D \nabla^{\perp} H_{\repsilon}(\Phi_{\repsilon}(\theta, h)) \partial_{\repsilon} \Phi_{\repsilon}(\theta, h) + T_{\repsilon}(h) D^2 \nabla^{\perp} H_{\repsilon}(\Phi_{\repsilon}(\theta, h)) \partial_{h} \Phi_{\repsilon}(\theta, h) \partial_{\repsilon} \Phi_{\repsilon}(\theta, h) \\
 &\qquad + T_{\repsilon}(h) D \nabla^{\perp} H_{\repsilon}(\Phi_{\repsilon}(\theta, h)) \partial_{\repsilon h} \Phi_{\repsilon}(\theta, h) \\
 &\qquad + \partial_{h} T_{\repsilon}(h) \nabla^{\perp} f(\Phi_{\repsilon}(\theta, h)) + T_{\repsilon}(h) D \nabla^{\perp} f(\Phi_{\repsilon}(\theta, h)) \partial_{h} \Phi_{\repsilon}(\theta, h) \\
\end{align*}
Therefore, there exists a constant $C_2$ depending only on $\lambda$, $\Lambda$ and $\| f \|_{C^3}$ such that
\begin{align*}
 |\partial_{\repsilon \theta h} \Phi_{\repsilon} (\theta, h)| \leq C_2 + C_2 |\partial_{\repsilon h} \Phi_{\repsilon} (\theta, h)|
\end{align*}
and by Gr\"onwall's inequality we find
\[
 |\partial_{\repsilon h} \Phi_{\repsilon}(\theta, h)| \leq C_3 \e^{C_3}
\]
for some $C_3$ depending only on $\lambda$, $\Lambda$ and $\| f \|_{C^3}$.
We conclude that there exists a constant $C$ depending only on $\lambda$, $\Lambda$ and $\| f \|_{C^3}$ such that
\[
 |\partial_{\repsilon \theta} \Phi_{\repsilon}(\theta, h)| \leq C h \quad \text{and} \quad |\partial_{\repsilon h} \Phi_{\repsilon}(\theta, h)| \leq C.
\]

\medskip
\noindent
\textbf{Step 4: Final step}
The goal of this final step is to prove that there exists a constant $C$ depending on $\lambda$, $\Lambda$ and $\| f \|_{C^3}$ such that for all $\repsilon < \frac{1}{2C}$, the Hamiltonian $H_{\repsilon}$ belongs to $(P_{\eps})$ with parameter $\eps = C \repsilon$. First of all, it is immediately clear that $H_{\repsilon}$ satisfies points \ref{item:ClassACond1} and \ref{item:ClassACond2} of Definition~\ref{def:ClassHamiltoniansA} for all $\repsilon < \repsilon_0$. Moreover, it is clear from \eqref{eq:PeriodFormula} that $\Omega \colon I \to \R^+$ belongs to $C^1(I)$ and is bounded from above and below by strictly positive constants. Thus $H_{\repsilon}$ satisfies point \ref{item:ClassACond3} of Definition~\ref{def:ClassHamiltoniansA}. Thus it only remains to prove the points \ref{item:ClassPCond4} and \ref{item:ClassPCond5} of Definition~\ref{def:ClassHamiltoniansP}.
We start by obtaining estimates and properties of the functions $\partial_{\theta} \Phi_{0}(\theta, h)$ and $\partial_{h} \Phi_{0}(\theta, h)$. Since $H_0 = H$ is radial, it is not difficult to see that $\partial_{\theta} \Phi_{0}(\theta, h)$ and $\partial_{h} \Phi_{0}(\theta, h)$ are orthogonal. Since $H_0$ is radial, it is also not difficult to prove that there exists a constant $c_0$ depending only on $\lambda$, $\Lambda$ and $\| f \|_{C^3}$ such that
\begin{equation}\label{eq:BoundsForEpsZeroOnDerivatives}
 \dfrac{1}{c_0} \leq |\partial_h \Phi_0(\theta, h)| \leq c_0 \quad \text{and} \quad \dfrac{1}{c_0} h \leq |\partial_{\theta} \Phi_0(\theta, h)| \leq c_0 h.
\end{equation}
Now, our aim is to establish transversality and uniformity for the Hamiltonian $H_{\repsilon}$.
From Step 3, we know that there exists a constant $C^{\prime}$ depending only on $\lambda$, $\Lambda$ and $\| f \|_{C^3}$ such that
\begin{equation}\label{eq:RecallingWhatComesFromStep3}
 | \partial_{\theta} \Phi_{\repsilon}(\theta, h) - \partial_{\theta} \Phi_{0}(\theta, h) | \leq C^{\prime} h \repsilon \quad \text{and} \quad | \partial_{h} \Phi_{\repsilon}(\theta, h) - \partial_{h} \Phi_{0}(\theta, h) | \leq C^{\prime} \repsilon.
\end{equation}
We start with transversality. The inequalities above imply that there exists a constant $C_T$ depending only on $\lambda$, $\Lambda$ and $\| f \|_{C^3}$ such that for any $\repsilon$ such that $|\repsilon| < \frac{1}{2 C_T}$ we have
\[
 \partial_{\theta} \Phi_{\repsilon}(\theta, h) \cdot \partial_{h} \Phi_{\repsilon}(\theta, h) \leq C_T \repsilon | \partial_{\theta} \Phi_{\repsilon}(\theta, h) | | \partial_{h} \Phi_{\repsilon}(\theta, h) | \quad \text{for all } (\theta, h).
\]
Now, we prove uniformity. It follows from a straightforward computation that
\begin{align*}
 \partial_{\repsilon} \left( |\partial_{\theta} \Phi_{\repsilon}(\theta, h)|^2 \right) &\leq C_4 h^2
\end{align*}
for some constant $C_4$ depending only on $\lambda$, $\Lambda$ and $\| f \|_{C^3}$.
As a consequence,
\begin{align}
\begin{split}\label{eq:TransversalityEstimateINQuantitativeProof}
 \left| |\partial_{\theta} \Phi_{\repsilon}(\theta, h)|^2  - \int_{0}^{1} |\partial_{\theta} \Phi_{\repsilon}(\tilde{\theta}, h)|^2 \, d \tilde{\theta} \right| &\leq \left| |\partial_{\theta} \Phi_{\repsilon}(\theta, h)|^2 - |\partial_{\theta} \Phi_{0}(\theta, h)|^2\right| \\
 &\qquad + \left| \int_{0}^{1} |\partial_{\theta} \Phi_{\repsilon}(\tilde{\theta}, h)|^2 \, d \tilde{\theta} - \int_{0}^{1} |\partial_{\theta} \Phi_{0}(\tilde{\theta}, h)|^2 \, d \tilde{\theta}\right| \\
 &\qquad + \underbrace{\left| |\partial_{\theta} \Phi_{0}(\theta, h)|^2  - \int_{0}^{1} |\partial_{\theta} \Phi_{0}(\tilde{\theta}, h)|^2 \, d \tilde{\theta} \right|}_{= 0} \\
 &\leq \left| |\partial_{\theta} \Phi_{\repsilon}(\theta, h)|^2 - |\partial_{\theta} \Phi_{0}(\theta, h)|^2\right| \\
 &\qquad + \int_{0}^{1} \left| |\partial_{\theta} \Phi_{\repsilon}(\tilde{\theta}, h)|^2 \, d \tilde{\theta} - |\partial_{\theta} \Phi_{0}(\tilde{\theta}, h)|^2 \right| \, d \tilde{\theta} \\
 &\leq 2 C_4 h^2 \repsilon.
\end{split}
\end{align}
Moreover, from \eqref{eq:BoundsForEpsZeroOnDerivatives} and \eqref{eq:RecallingWhatComesFromStep3}, we have
\begin{equation*}
 |\partial_{\theta} \Phi_{\repsilon}(\theta, h)| \geq \frac{1}{c_0} h - C^{\prime} h \repsilon \geq \frac{1}{2 c_0} h
\end{equation*}
whenever $|\repsilon| \leq \frac{1}{2 c_0 C^{\prime}}$.
It follows that
\begin{equation}\label{eq:BoundFromBelowOnThetaDerivative}
 |\partial_{\theta} \Phi_{\repsilon}(\theta, h)|^2 \geq \frac{1}{4 c_0^2} h^2.
\end{equation}
Combining \eqref{eq:TransversalityEstimateINQuantitativeProof} and \eqref{eq:BoundFromBelowOnThetaDerivative} we see that there exists a constant $C_U$ depending only on $\lambda$, $\Lambda$ and $\| f \|_{C^3}$ such that for any $\repsilon$ such that $|\repsilon| < \frac{1}{2 C_U}$ we have
\[
 \left| |\partial_{\theta} \Phi_{\repsilon}(\theta, h)|^2  - \int_{0}^{1} |\partial_{\theta} \Phi_{\repsilon}(\tilde{\theta}, h)|^2 \, d \tilde{\theta} \right| \leq C_U \repsilon |\partial_{\theta} \Phi_{\repsilon}(\theta, h)|^2.
\]
This proves \ref{item:ClassPCond4} and \ref{item:ClassPCond5} in Definition~\ref{def:ClassHamiltoniansP} and hence concludes the proof.
\end{proof}

\begin{remark}
 In Proposition~\ref{prop:QuantitativePerturbationAreInClassP} above, we only considered perturbations $f$ which are constant on the boundary of a ball so that the resulting perturbed Hamiltonians are constant on the boundary of the ball. Hence the resulting velocity field is tangent to the boundary of the ball and enters into our framework. One could state a similar result dropping the assumption that that $f$ is constant on a ball. The price to pay for this is that the domain on which perturbations are defined will depend on $\repsilon$ since the domain has to be selected so that the Hamiltonian is constant on the boundary. To prove a statement of this kind, one can extend the perturbation $f$ to a larger ball in such a way that it is constant on the boundary of the larger ball and then apply Proposition~\ref{prop:QuantitativePerturbationAreInClassP}.
\end{remark}

\subsection{On the evolution of $\rho_0$ and the commutator $[P_0,\Delta]$}

We firstly provide an example where the solution of the advection-diffusion equation starts with an initial data with zero average along the streamline, and, in finite time, the average along the streamlines nearly matches the mass of the initial data as measured in $L^1$. This example exploits the hyperbolic point of the cellular flow and highlights the observation that, in a short time span, non-trivial phenomena can occur due to differences in width between two streamlines (this phenomenon is ruled out by our assumptions \ref{item:ClassACond4}, \ref{item:ClassPCond4}). 

\begin{proposition}\label{thm:example1}
Let $H(x_1, x_2) = \sin \left( \frac{x_1}{2 \pi} \right) \sin \left( \frac{x_2}{2 \pi} \right) \in C^\infty (\T^2)$, then for any  $\epsilon >0$ there exist $0 < \nu (\epsilon) \ll 1$ and  $\rho^{\initial} \in L^\infty$ such that $P_0 \rho^{\initial} =0$, $\| \rho^{\initial} \|_{L^\infty} \leq 1$ and the solutions $\rho$ of \eqref{eq:advdiff} with $u = \nabla^\perp H$ is such that 
$$ \| P_0 \rho (1, \cdot) \|_{L^1} \geq \left ( 1 - \epsilon \right) \| \rho^{\initial} \|_{L^1} $$
\end{proposition}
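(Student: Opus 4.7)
The plan is to construct $\rho^{\initial} \in L^{\infty}(\T^2)$ with $\|\rho^{\initial}\|_{L^\infty} \leq 1$, concentrated near the hyperbolic point at the origin, designed so that $P_0 \rho^{\initial} = 0$ but $P_0 \rho(1,\cdot)$ captures nearly the full $L^1$ mass of $\rho^{\initial}$. The underlying mechanism exploits the failure of the uniformity assumption \ref{item:ClassACond4} near the hyperbolic point: locally $H(x) \approx x_1 x_2 / (2\pi)^2$, so on a streamline $\{H = h\}$ for $0 < h \ll 1$ the modulus $|\nabla H(x)| \approx \sqrt{x_1^2 + x_2^2}/(2\pi)^2$ ranges from $\sim \sqrt{h}$ (near the symmetric points $(a,a)$ with $a = 2\pi \sqrt{h}$) to order one (near the ``corners''). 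Accordingly the streamline-normal width $1/|\nabla H|$ varies by a factor of $1/\sqrt{h}$ along the single streamline, producing a strong $\theta$-dependence in the Laplacian expressed in action-angle variables and hence a large commutator $[P_0, \Delta]$.

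Concretely, in rescaled action-angle coordinates $(\theta, h)$ on the cell adjacent to the origin, I would take
\[
 \rho^{\initial}(\Phi(\theta, h)) = \phi(\theta)\,\chi\!\left(\frac{h - h_0}{\ell}\right),
\]
where $\chi$ is a smooth bump, $\ell$ and $h_0$ are small parameters to be tuned in $\nu$ and $\epsilon$ (with $h_0 \ll 1$ so the support sits in the regime of greatest $|\nabla H|$ variation), and $\phi \colon \T \to \R$ has $\int_\T \phi = 0$ so that $P_0 \rho^{\initial} = 0$ automatically. The $L^\infty$ and $L^1$ bounds on $\rho^{\initial}$ then follow from the choice of $\phi$ and the parameters.

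For the dynamics, the splitting \eqref{eq:PperpP0} with $\rho_0(0) = 0$, $\rho_\perp(0) = \rho^{\initial}$ gives
\[
 \rho_0(t) = \int_0^t \nu P_0 \Delta \rho_\perp(s)\, ds.
\]
In the scaling $\ell \sim \sqrt{\nu}$, the quantity $\nu \Delta \rho^{\initial}$ has $L^1$ norm comparable to $\|\rho^{\initial}\|_{L^1}$, and the explicit form of $\Delta$ in action-angle variables near the local model $H = x_1 x_2/(2\pi)^2$ shows that $P_0 \Delta \rho^{\initial}$ is nontrivial with a definite sign on an order-one fraction of the relevant streamlines, provided $\phi$ is chosen so that the $\theta$-oscillation interacts constructively with the $\theta$-dependent coefficients of $\Delta$ (e.g.\ $\phi(\theta) = \cos(2\pi\theta)$ coupled to the $\sin/\cos$ expansion of $|\nabla H|^2 \circ \Phi$ near the hyperbolic streamline). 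Integrating over $[0,1]$, the accumulated $\rho_0(1)$ is then of size comparable to $\|\rho^{\initial}\|_{L^1}$, and one tunes $\nu(\epsilon)$ small enough so that the ratio is $(1-\epsilon)$.

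The main obstacle is controlling the full evolution of $\rho_\perp$ over $[0, 1]$, in particular showing that the source $\nu P_0 \Delta \rho_\perp(s)$ retains its coherent sign structure throughout: one must verify that both the diffusive decay of $\rho_\perp$ (which on the cellular flow occurs on the enhanced-dissipation time $\sim \nu^{-1/2} \gg 1$, so it is only moderate on $[0,1]$) and the feedback of the growing $\rho_0$ into $\rho_\perp$ via $\nu P_\perp \Delta \rho_0$ do not cancel the contribution. This reduces to a quantitative estimate in the rescaled action-angle coordinates of Section~\ref{sec:toolbox} specialised to the local hyperbolic model, using the explicit expressions for $\Delta$ and the Poincar\'e-type inequalities of Lemma~\ref{lem:Poincare} to bound the $L^1$ leakage. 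Once these estimates are in place the final choice of $\nu(\epsilon)$ delivers the claimed bound.
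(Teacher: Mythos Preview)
Your proposal has a genuine gap at the key quantitative step. You aim to show $\|P_0\rho(1,\cdot)\|_{L^1}\ge(1-\epsilon)\|\rho^{\initial}\|_{L^1}$, but your mechanism --- integrating the source $\nu P_0\Delta\rho_\perp$ over $[0,1]$ with a tensor-product datum $\phi(\theta)\chi((h-h_0)/\ell)$ --- at best produces $\|\rho_0(1)\|_{L^1}$ of size \emph{comparable} to $\|\rho^{\initial}\|_{L^1}$, i.e.\ a fixed constant fraction. There is no parameter in your construction that drives this ratio towards $1$: making $\nu$ smaller while keeping $\ell\sim\sqrt{\nu}$ keeps $\nu\|\Delta\rho^{\initial}\|_{L^1}$ at the same scale, and the projection $P_0$ of a commutator term has no reason to capture nearly the full mass. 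The sentence ``one tunes $\nu(\epsilon)$ small enough so that the ratio is $(1-\epsilon)$'' is unsupported and, as far as I can see, not achievable by this route.

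The paper's argument is structurally different and explains why $(1-\epsilon)$ is attainable. It writes $\rho^{\initial}=\rho^{\initial}_++\rho^{\initial}_-$ with $\rho^{\initial}_+\ge 0$ and $\rho^{\initial}_-\le 0$ placed on the \emph{same} streamlines $h\in[\delta,2\delta]$ but at different angular locations, so that $P_0\rho^{\initial}=0$. The piece $\rho^{\initial}_-$ sits near the hyperbolic point where advection is slow and, for $\nu$ small, stays essentially on the original streamlines (compared to pure transport via Feynman--Kac). The piece $\rho^{\initial}_+$ sits where the same streamlines are extremely thin (width $\sim\delta^2$) in the normal direction; comparing to the heat equation shows that after time $1$ its mass has spread onto streamlines $h>2\delta$. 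Thus at time $1$ the signed pieces $P_0\rho_+$ and $P_0\rho_-$ live on essentially disjoint streamline sets, and since each is of one sign one gets $\|P_0\rho\|_{L^1}\approx\|P_0\rho_+\|_{L^1}+\|P_0\rho_-\|_{L^1}=\|\rho^{\initial}_+\|_{L^1}+\|\rho^{\initial}_-\|_{L^1}=\|\rho^{\initial}\|_{L^1}$, with errors controlled by $\delta$ and hence by $\epsilon$. The $(1-\epsilon)$ factor comes from this \emph{separation-of-streamlines} mechanism, not from the size of a single commutator term; your proposal is missing this idea.
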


\begin{proof}
Let $\delta >0$ be a free parameter depeding on $\epsilon >0$.
We consider the rescaled action angle variables $(h, \theta)$ introduced in Section \ref{sec:rescaled-action-angle} with $\{ H = h^2 \}$ and let $\Phi (\theta , h)$ be the change of variable $\Phi : \T \times [0,1] \to A \subset \T^2$ with $\Phi (0,0) = (0,0)$, the hyperbolic point and $A$ a neighbourhood containing the hyperbolic point. We now define 
$$\rho^{\text{in}} = \rho^{\text{in}}_+ + \rho^{\text{in} }_- $$
where, for $\delta$ sufficiently small, $\rho^{\text{in}}_- \in C^\infty$ satisfies:
\begin{itemize}
\item $\| \rho^{\text{in}}_- \|_{L^\infty} \leq 1 $ and $\rho^{\text{in}}_- \leq 0$;
\item $ \text{supp} (\rho^{\text{in}}_- ) \subset \Phi ([\theta_0, \theta_0 + 1 / T(\delta )] \times [\delta, 2 \delta]) $, $\int_{\T^2} \rho^{\text{in}}_- = - \frac{ \delta^2}{4}$, where $T$ is the period and $\theta_0 = \text{argmin} \{ \dist (\Phi (\theta , \delta) , (0,0)) \}$. Since $H (x_1, x_2) \sim x_1 x_2$ near the hyperbolic point $(0,0)$ we have 
$$ \left | \Phi (\theta_0, h) - \Phi \left  (\theta_0 + \frac{1}{T(h)}, h \right ) \right | =    \int_{\theta_0}^{\theta_0 + 1}  | \nabla H (X_t)| dt \sim h $$
where the last $\sim$ follows from the fact that $|\nabla H (x)| \leq |x|$ for any $x \in \T^2$ and 
$$|X_t (x)| \leq |x| + \int_0^t | \nabla H (X_s)|  ds \leq |x | + \int_0^t |X_s| ds.$$ 
Therefore, by Gr\"onwall we have $|X_1 (x)| \leq e |x| $ which implies the property we need because we choose $\theta_0$ so that  $| \nabla H ( X_{\theta_0} (x))| \leq |X_{\theta_0} (x)| \sim h$ (we are  close to the hyperbolic point).
 Therefore, since $h \in [\delta , 2 \delta]$,
 it follows that there exists $x_0 \in \T^2$ and $C>0$ such that  $ \text{supp} \rho^{\text{in}}_- \subset B (x_0, C \delta)$, meaning that $$| \text{supp} (\rho^{\text{in}}_-)| \sim \| \rho^{\text{in}}_- \|_{L^1}.$$ 
 We will use this fact later. Finally, up to mollification, we can suppose that $\| \nabla \rho^{\text{in}}_- \|_{L^\infty} \leq C \delta^{-1} $. 
\end{itemize}

We define $\rho^{\text{in}}_+$ satisfying the following:
\begin{itemize}
\item In rescaled action-angle variables we define 
$$ \rho^{\text{in}}_+ (\theta , h) = - \varphi ( \theta)  \int_0^1 \rho_-^{\initial} (\Phi (\theta, h)) d \theta$$
where $\varphi = \mathbbm{1}_{[\overline{\theta}_0 , \overline{\theta}_0 + \frac{1}{T (h)}]}$  and
 $\overline{\theta}_0 (h) = \text{argmin} \{ \dist (\Phi (\theta , h) , (0,1/2)) \}$. It can be checked that $\text{supp} (\rho^{\text{in}}_{+}) \subset [0, C \delta^2]\times [1/6, 5/6]$. Finally we consider $\overline{\rho}^{\text{in}}_{+} = \mathbbm{1}_{[0, 2 C \delta^2] \times [0,1] } \star \psi $ independent on $y$, for some convolution kernel $\psi = \frac{2 C}{\delta^{2}} \psi(x/ 2C \delta^{2})$. Using the support  of the function, it is possible to  check that $\rho^{\text{in}}_{+} \leq \overline{\rho}^{\text{in}}_{+}$ pointwise and we also have that  $\|  \nabla \overline{\rho}^{\text{in}}_{+} \|_{L^\infty} \lesssim \delta^{-2}$ and $ \| \overline{\rho}^{\text{in}}_{+} \|_{L^1} \sim   \| {\rho}^{\text{in}}_{+} \|_{L^1} \sim \delta^2$.
\end{itemize}

It is clear by definition that  
$$ P_0 \rho^{\text{in}} = P_0 (\rho^{\text{in}}_+ + \rho^{\text{in}}_-) =0 \,. $$
and $\| \rho^{\text{in}}\|_{L^\infty} \leq 1$.

We introduce the following functions:
\begin{itemize}
\item we denote with $\rho_-^{\adf}, \rho_+^{\adf}$ the solutions of the advection diffusion equation \eqref{eq:advdiff} with initial data $  \rho^{\text{in}}_-$ and $  \rho^{\text{in}}_+$ respectively;
\item we denote $\rho_{-}^{\adv}$ the solution of the advection equation, that is \eqref{eq:advdiff} with $\nu=0$, with initial datum $  \rho^{\text{in}}_- $. We will compare $\rho_-^{\adf}$ with $\rho_-^{\adv}$;
\item  we denote with $\overline{\rho}_{+}^{\heat}$ the solution of the heat equation, namely \eqref{eq:advdiff} with $u\equiv 0$, with initial datum $\overline{\rho}_+^{\initial}$ and  $\overline{\rho}_{+}^{\adf}$ the solution of the advection diffusion with the initial datum $\overline{\rho}_+^{\initial}$. We will compare $\overline{\rho}_+^{\adf}$ with $\overline{\rho}_{+}^{\heat}$ and use that $ {\rho}_+^{\adf} \leq \overline{\rho}_+^{\adf}$ pointwise.
\end{itemize}

We now have the following inequality  at a fixed time $t=1$
\begin{align*}
\| P_0 \rho \|_{L^1} & = \| P_0 \rho_-^{\adf} + P_0  \rho_+^{\adf} \|_{L^1 (\Phi ((0,1) \times [0, 2 \delta]))} +  \| P_0 \rho_-^{\adf} + P_0  \rho_+^{\adf} \|_{L^1 (\Phi ((0,1) \times [ 2 \delta , 1]))} 
\\
&  \geq \| P_0 \rho_-^{\adf} \|_{L^1 (\Phi ((0,1) \times [0, 2 \delta]))} - \| P_0  \rho_+^{\adf} \|_{L^1 (\Phi ((0,1) \times [0, 2 \delta]))}
\\
& \quad + \| P_0  \rho_+^{\adf} \|_{L^1 (\Phi ((0,1) \times [2 \delta, 1]))} -  \| P_0  \rho_-^{\adf} \|_{L^1 (\Phi ((0,1) \times [2 \delta, 1]))} \,.
\end{align*}
Regarding these four last quantities, we claim the following estimates which we shall prove later. We have
\begin{align} \label{comm:claim1}
&\| P_0 \rho_{-}^{\adf} (1, \cdot ) \|_{L^1 (\Phi ((0,1) \times [0, 2 \delta]))} \geq (1 - \sfrac{\epsilon}{4}) \|  \rho^{\text{in}}_- \|_{L^1} \,, \\
& \| P_0 \rho_{-}^{\adf} (1, \cdot ) \|_{L^1 (\Phi ((0,1) \times [2 \delta, 1]))} \leq   \sfrac{\epsilon}{4}  \|  \rho^{\text{in}}_- \|_{L^1} \,,
\end{align}
 and 
 \begin{align} \label{comm:claim2}
 & \| P_0 \rho_{+}^{\adf} (1, \cdot ) \|_{L^1 (\Phi ((0,1) \times [0, 2 \delta]))} \leq   \sfrac{\epsilon}{4} \|  \rho^{\text{in}}_+ \|_{L^1}  \,, \\
  & \| P_0 \rho_{+}^{\adf} (1, \cdot ) \|_{L^1 (\Phi ((0,1) \times [2 \delta, 1]))} \geq (1-    \sfrac{\epsilon}{4})  \|  \rho^{\text{in}}_+ \|_{L^1}.
 \end{align}
Therefore, using that $\| \rho_{-}^{\text{in}} \|_{L^1} + \| \rho_{+}^{\text{in}} \|_{L^1} = \| \rho^{\text{in}} \|_{L^1}$, thanks to the disjointness of the supports of $\rho_{-}^{\text{in}}$ and $ \rho_{+}^{\text{in}}  $,
we conclude the proof.

We now prove \eqref{comm:claim1} and \eqref{comm:claim2}.
We firstly prove \eqref{comm:claim1}. Using the representation of the transport equation with the backward flow map associated to $u = \nabla^{\perp} H$, still denoted with $X_\cdot$, by the Feynman-Kac formula we know that  
$$\rho_{-} (t, x ) = \mathbb{E} \rho_-^{\initial} (X^{\nu}_{t, 0} (x)). $$ 
Moreover, we have  $\mathbb{E} \| X_{1,0} - X^\nu_{1,0} (\cdot)  \|_{L^\infty ( \T^2)} \leq C \sqrt{\nu}$, where $X^\nu$ is the backward stochastic flow. Then
$$\| \rho_{-}^{\adv} (1, \cdot ) - \rho_{-}^{\adf} (1, \cdot ) \|_{L^1_x}  \lesssim  \sqrt{\nu} \| \nabla  \rho^{\text{in}}_- \|_{L^\infty} |\text{supp}  \rho^{\text{in}}_-  | \lesssim \sqrt{\nu} \delta^{-1} \|  \rho^{\text{in}}_- \|_{L^1} \,. $$
Choosing $\nu = \delta^{2 + \gamma}$ with $\gamma \in (0,1/4)$ to be a fixed constant and $\delta$ sufficiently small so that  $\delta^{\gamma/2} \leq \frac{\epsilon}{4C}$,  observing that $\|P_0 f \|_{L^1} \leq \| f \|_{L^1}$, we conclude  
\begin{align*}
\| P_0 \rho_{-}^{\adf} (1, \cdot ) \|_{L^1 (\Phi ( (0,1) \times [0, 2 \delta] ) )} & \geq \| P_0 \rho_{-}^{\adv} (1, \cdot ) \|_{L^1 (\Phi ((0,1) \times [0, 2 \delta]))} - \| \rho_{-}^{\adv} (1, \cdot ) - \rho_{-}^{\adf} (1, \cdot ) \|_{L^1}  
\\
& \geq 
(1 - \epsilon/4) \|  \rho^{\text{in}}_- \|_{L^1}. 
\end{align*}
Adding to both sides of the previous inequality the term $\| P_0 \rho_{-}^{\adf} (1, \cdot ) \|_{L^1 (\Phi (0,1) \times [2 \delta, 1])} $, and using that $ \| \rho^{\text{in}}_- \|_{L^1} = \| P_0 \rho^{\text{in}}_- \|_{L^1} = \| P_0 \rho_-  (1, \cdot ) \|_{L^1}$ (thanks to the non-negativity of the functions and the conservation of the average along the evolution), we have
\begin{align*}
\| P_0 \rho_{-}^{\adf} (1, \cdot ) \|_{L^1 (\Phi ( (0,1) \times [2 \delta, 1] ) )} \leq \epsilon/4 \| \rho^{\text{in}}_- \|_{L^1} 
\end{align*}
and therefore the first part of the claim \eqref{comm:claim1} holds. 

Now we consider  the second part of the claim \eqref{comm:claim2}.
From the explicit formula of the heat equation solution, for  $t \geq  \delta^{ \gamma }$ it is not hard to deduce that
$$| \overline{\rho}_{+}^{\heat} (t,  x)| \leq 2 \left |  \frac{1}{\sqrt{4 \pi \nu t}} \int_{\T} \exp \left ( - \frac{| x - y_1|^2}{4 \nu t } \right ) \mathbbm{1}_{[0, 2 C \delta^2]} (y_1) d y_1 \right | \lesssim \frac{\delta^2}{\sqrt{\nu t }} \lesssim \delta^{1 - \sfrac{\gamma}{2}} \,,$$
for all $x \in \T^2$, where the last holds since  $\nu = \delta^{2+ \gamma}$.

Now, let  $X^{1}$ be the first component of the backward flow of $u$ and  $X^{\nu, 1}$ be the first component of the stochastic flow. Observing that $\overline{\rho}_+^{\initial}$ does not depend on the second variable, we use   the Feynman-Kac formula to conclude that for any $t$ the following holds
\begin{align*}
\|  \overline{\rho}_{+}^{\heat}  (t, \cdot) - \overline{\rho}_{+}^{\adf} (t, \cdot ) \|_{L^1} & \leq \int_{\mathbb{T}^2 } \| \partial_x \overline{\rho}_+^{\initial} \|_{L^\infty} \mathbbm{1}_{\text{supp} (\overline{\rho}_+^{\initial})} (x) \mathbb{E} | X^{\nu,1}_{1,0} (x, \cdot ) - \sqrt{2 \nu} W_t | dx \,.
\end{align*}
 Using that $| \partial_y H | \leq | \sin (x) | \leq |x| $ we can prove that 
for any $t \leq 1$
$$\mathbb{E} | X^\nu_{t,0} (x, \cdot ) - x - \sqrt{2 \nu} W_t | \leq \mathbb{E} \int_0^t  | \partial_y H (X^\nu_{t, s} (x, \cdot ))| ds \leq \mathbb{E} \int_0^t  | X^\nu_{t, s} (x, \cdot )| ds \leq 2 t |x|$$
where  in the last we crucially used that $t \leq 1$ to have that $\mathbb{E}   | X^\nu_{t, s} (x, \cdot )| \leq 2 |x|$ for any $t, s \leq 1$. Therefore, using also $\|\mathbbm{1}_{\text{supp} (\overline{\rho}_+^{\initial})} \|_{L^1} \lesssim \|  \overline{\rho}_+^{\initial} \| _{L^1}$,  $\| \partial_x \overline{\rho}_+^{\initial} \|_{L^\infty} \lesssim \delta^{-2}$ and $|x| \lesssim \delta^2$ for any $x \in \text{supp} (\overline{\rho}_+^{\initial})$ we conclude  
$$ \|  \overline{\rho}_{+}^{\heat}  (t, \cdot) - \overline{\rho}_{+}^{\adf} (t, \cdot ) \|_{L^1} \lesssim t    \|  \overline{\rho}_+^{\initial} \| _{L^1} \lesssim \delta^{\gamma}  \|  {\rho}_+^{\initial} \| _{L^1}$$
for any $t \leq \delta^{\gamma}$.

It can be shown that $\mathcal{L}^2 (\Phi ( (0,1) \times [0, 2 \delta])) \leq \delta^{3/2}$ and that for any non negative function $f$ we have $\| P_0 f \|_{L^1} = \| f \|_{L^1}$.
Using these, the non negativity of $\rho_+^{\initial}$, the fact that the average of the solutions to the advection diffusion equation  is conserved along the evolution and the maximum principle, we conclude 
\begin{align*}
\| P_0 \rho_{+}^{\adf} (1, \cdot ) &  \|_{L^1 (\Phi ( (0,1) \times [0, 2 \delta]))}  = \|  \rho_{+}^{\adf} (1, \cdot ) \|_{L^1 (\Phi ( (0,1) \times [0, 2 \delta] ) )}    = \|  \rho_{+}^{\adf} (\delta^{\gamma}, \cdot ) \|_{L^1 (\Phi ( (0,1) \times [0, 2 \delta] ) )}
\\
&  \leq  \|  \overline{\rho}_{+}^{\adf} (\delta^{\gamma} , \cdot ) \|_ {L^1 (\Phi ( (0,1) \times [0, 2 \delta] ) )} 
\\
& \leq  \|  \overline{\rho}_{+}^{\adf} (\delta^{\gamma}, \cdot ) -  \overline{\rho}_{+}^{\heat} (\delta^{\gamma}, \cdot) \|_{L^1} 
+  \|  \overline{\rho}_{+}^{\heat} (\delta^{\gamma}, \cdot ) \|_{L^1 (\Phi ( (0,1) \times [0, 2 \delta] ) )}
\\
& \lesssim \delta^{\gamma/8} \| \rho_{+}^{\text{in}} \|_{L^1} + \delta^{\sfrac{3}{2}} \delta^{ 1- \gamma}   \lesssim \delta^{\gamma}  \| \rho_{+}^{\text{in}} \|_{L^1} \,,
\end{align*}
where in the last inequality we used $ \| \rho_{+}^{\text{in}} \|_{L^1}  \sim \delta^2$ and $\gamma < 1/4$.
 Choosing $\delta$ even smaller, i.e. such that   $\delta^{\gamma} \ll \epsilon$ to reabsorb the universal constants we conclude $\| P_0 \rho_{+}^{\adf} (1, \cdot ) \|_{L^1 (\Phi ( (0,1) \times [0, 2 \delta] ) )} \leq \sfrac{\epsilon}{4} \|  \rho^{\text{in}}_+ \|_{L^1} $.

Finally, as done before, by adding   $\| P_0 \rho_{+}^{\adf} (1, \cdot ) \|_{L^1 (\Phi ( (0,1) \times [2 \delta, 1] ) )} $ to both sides of the previous inequality and using that $ \| \rho^{\text{in}}_+ \|_{L^1} = \| P_0 \rho^{\adf}_+ \|_{L^1} $, we conclude 
 $\| P_0 \rho_{+}^{\adf} (1, \cdot ) \|_{L^1 (\Phi ( (0,1) \times [2 \delta, 1] ) )} \geq (1-    \sfrac{\epsilon}{8})  \|  \rho^{\text{in}}_+ \|_{L^2} $. This, we prove \eqref{comm:claim2} and this concludes the proof of the proposition.\end{proof}

We now provide a second example where we show that  the the commutator $[P_0, \Delta]$ can really affect the $L^2$ dynamics in a non-trivial way. In fact, in the previous example we cannot have a lower bound in $L^2$ due to the unboundedness of the period near the hyperbolic point. Here, 
the analysis is performed near a non-degenerate elliptic point where the period is bounded. 

In the example we construct, we have an Hamiltonian not complying with the assumptions in Definition \ref{def:ClassHamiltoniansA}. In particular, there are stagnation regions where $\nabla H=0$ (the Hamiltonian is constant in a portion of the domain). The proof does not rely on these stagnation regions and can be adapted to avoid this issue. However, this requires some more technicalities, as the use of Feynman-Kac formula or the heat kernel in more general bounded domains instead of $\TT^2$. The main purpose of this example is investiganting the behavior of $\rho_0$ when we are localized around elliptic points, and we prefer to keep the proof lighter at the price of having an Hamiltonian not belonging to the class \ref{HamiltonianClassA}.
\begin{proposition} \label{thm:example2}
There exists $H: \T^2 \to \R$ such that the following holds. For any $\nu \in (0, 1/8)$, there exist $\rho^{\initial} \in L^\infty (\T^2) $ with $P_0 \rho^{\initial}  =0$ and $t \in (0, 1)$ such that 
$$\| P_0 \rho (t, \cdot) \|_{L^2} \geq \exp (- 16) \|  \rho^{\initial}  \|_{L^2}  \,,$$
where $\rho$ is the solution to \eqref{eq:advdiff} with initial datum $\rho^{\initial}$ and velocity field $u=\nabla^\perp H$.
\end{proposition}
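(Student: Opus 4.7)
My plan is to exhibit a Hamiltonian whose flow consists of a rotating disk surrounded by a stagnation region, so that the nontrivial commutator $[P_0,\Delta]$ at the interface transfers mass from the rotating cell (where $P_0$ annihilates angular mode one) into the stagnation region (where $P_0$ is the identity). Concretely, take $H\in C^\infty(\T^2)$ radial around the origin, equal to $-r^2/2$ for $r\leq 1/4$ and constant for $r\geq 1/4+\varepsilon_0$, with a smooth monotone interpolation in between and $\varepsilon_0$ so small that it plays no role. Then $u=\nabla^\perp H$ rotates inside $B_{1/4+\varepsilon_0}(0)$ with angular speed $\Omega(r):=H'(r)/r$, $\|\Omega\|_\infty\leq 1$, and vanishes outside. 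Hence $P_0$ acts as the angular average on the rotation region and as the identity on the stagnation region.

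Given $\nu\in(0,1/8)$, I set $t:=1/8$ and $\delta:=\sqrt{\nu/4}\in(0,1/8)$, and take $\rho^{\initial}(r,\theta):=f_\nu(r)\cos\theta$ with $f_\nu$ a non-negative smooth bump supported in $[1/4-\delta,1/4]$ satisfying $\|f_\nu\|_{L^\infty}\leq 1$. The angular average of $\cos\theta$ vanishes, so $P_0\rho^{\initial}\equiv 0$. Radial symmetry of $H$ implies that the ansatz $\rho(r,\theta,t)=\Re(c(r,t)\e^{i\theta})$ is preserved by \eqref{eq:advdiff}, and the complex amplitude satisfies the one-dimensional equation
\begin{equation*}
\partial_t c = -i\Omega(r)\,c+\nu\Delta_1 c,\qquad c(r,0)=f_\nu(r),\qquad \Delta_1:=\partial_r^2+r^{-1}\partial_r-r^{-2}.
\end{equation*}
Because $P_0\rho(t)$ vanishes where the flow is nontrivial and equals $\rho(t)$ elsewhere, one has $\|P_0\rho(t)\|_{L^2}^2=\pi\int_{1/4+\varepsilon_0}^\infty|c(r,t)|^2\,r\,dr$, reducing the proof to a lower bound on this integral.

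The main analytical step is a comparison with the pure mode-one heat evolution: let $\tilde c$ solve $\partial_t\tilde c=\nu\Delta_1\tilde c$ with $\tilde c(0)=f_\nu$. Writing $w:=c-\tilde c$, the skew-adjoint term $-i\Omega c$ drops out of the energy identity for $c$, so $\|c(s)\|_{L^2(r\,dr)}$ is non-increasing in $s$, and Duhamel together with the contractivity of $\e^{\tau\nu\Delta_1}$ on $L^2(r\,dr)$ yields $\|w(t)\|_{L^2(r\,dr)}\leq t\|\Omega\|_\infty\|f_\nu\|_{L^2(r\,dr)}\leq \tfrac18\|f_\nu\|_{L^2(r\,dr)}$. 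For $\tilde c$, since $\sqrt{\nu t}\leq 1/8$ and $\rho^{\initial}$ is supported in $B_{1/4}$, torus wrap-around is exponentially small and I use the planar representation
\begin{equation*}
\tilde c(r,t)=\frac{1}{2\nu t}\int_0^\infty\e^{-(r^2+s^2)/(4\nu t)}\,I_1\!\left(\frac{rs}{2\nu t}\right)f_\nu(s)\,s\,ds.
\end{equation*}
The choice $\delta=\sqrt{\nu/4}$ is tuned so that the diffusive scale $\sqrt{2\nu t}$ matches $\delta$. Using the asymptotic $I_1(\beta)\simeq \e^\beta/\sqrt{2\pi\beta}$ on the relevant range $\beta=rs/(2\nu t)\gtrsim 1$ with $r,s\sim 1/4$, the kernel reduces to a Gaussian of variance $2\nu t$ in $r-s$ times the slowly varying prefactor $(rs)^{-1/2}$, and a direct error-function computation produces an absolute constant $c_0>1/4$, \emph{uniform in $\nu$}, with $\|\tilde c(t)\|_{L^2([1/4,\infty),\,r\,dr)}\geq c_0\|f_\nu\|_{L^2(r\,dr)}$.

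Combining both ingredients gives
\begin{equation*}
\|P_0\rho(t)\|_{L^2}\geq \sqrt\pi\left(c_0-\tfrac18\right)\|f_\nu\|_{L^2(r\,dr)}=\left(c_0-\tfrac18\right)\|\rho^{\initial}\|_{L^2}\geq \e^{-16}\|\rho^{\initial}\|_{L^2},
\end{equation*}
since $c_0-\tfrac18>\tfrac18\gg\e^{-16}$. The principal technical obstacle is executing the Bessel/error-function analysis uniformly in $\nu\in(0,1/8)$: one must check that the prefactor $(rs)^{-1/2}$ can be frozen at its central value up to lower-order errors, and that both the contribution from the transition annulus $r\in[1/4,1/4+\varepsilon_0]$ and the periodic images on $\T^2$ are of exponentially small order in $1/\sqrt{\nu t}$. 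The generous constant $\e^{-16}$ in the statement exists precisely to absorb all such numerical losses without delicate bookkeeping.
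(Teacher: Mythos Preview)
Your approach is genuinely different from the paper's. The paper takes an \emph{elliptical} Hamiltonian $H=\varphi(x,y)(x^2+y^2/9)$ and places two oppositely-signed bumps on the same action annulus but at different angular positions; because the streamlines are ellipses, these two bumps sit at different physical distances from a target annulus, so isotropic heat diffusion transfers mass from them unequally and produces a nonzero streamline average there. The stagnation region is incidental (only used to make $H$ periodic). Your mechanism is instead the rotation/stagnation interface of a \emph{radial} Hamiltonian: mode-one data leaks across the interface into the stagnation zone where $P_0$ is the identity. Both ideas are legitimate, and yours is arguably more elementary since it reduces to a scalar radial equation.

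However, your proof has a genuine gap in the order of quantifiers. The statement fixes $H$ first and then lets $\nu\in(0,1/8)$ vary, so your transition width $\varepsilon_0>0$ is a fixed positive constant while $\delta=\sqrt{\nu/4}$ and the diffusive scale $\sqrt{4\nu t}=\sqrt{\nu/2}$ both tend to zero with $\nu$. Your initial bump sits in $[1/4-\delta,1/4]$, but $P_0\rho$ is nonzero only for $r>1/4+\varepsilon_0$; thus you need heat mass to traverse a gap of fixed width $\varepsilon_0$ in diffusive time $\nu t<1/8$. For small $\nu$ that fraction is $\exp(-c\,\varepsilon_0^2/(\nu t))\to 0$, not a uniform constant. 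Your final remark that ``the contribution from the transition annulus $r\in[1/4,1/4+\varepsilon_0]$ \dots\ is of exponentially small order in $1/\sqrt{\nu t}$'' is exactly backwards: for small $\nu$ that annulus captures essentially all of the leaked mass, and what lies beyond it is the exponentially small part. So the claimed uniform bound $\|\tilde c(t)\|_{L^2([1/4,\infty),r\,dr)}\ge c_0\|f_\nu\|$ with $c_0>1/4$ does not transfer to the region $r>1/4+\varepsilon_0$ that you actually need.

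The fix is simple: place $f_\nu$ at the \emph{outer} edge of the rotation region, i.e.\ support it in $[1/4+\varepsilon_0-\delta,\,1/4+\varepsilon_0]$ rather than $[1/4-\delta,1/4]$. There $\Omega\neq 0$, so $P_0\rho^{\mathrm{in}}=0$ still holds, the Duhamel bound $\|c-\tilde c\|\le t\|\Omega\|_\infty\|f_\nu\|$ is unchanged, and now a uniform fraction of $\tilde c$ does sit in $r>1/4+\varepsilon_0$ since the diffusive scale matches $\delta$. With this adjustment your argument goes through; as written it does not.
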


\begin{proof}
Without loss of generality, we assume the torus to be $[-1/2 ,1/2]^2$ with the usual identifications of the boundary.
Let us fix $H(x,y) =  \varphi (x, y) \left (x^2 + \frac{y^2}{9} \right ) $ with $\varphi \in C^\infty_c (B(0,1/2))$, $\varphi \equiv 1$ on $B(0, 1/4)$ and $\| \varphi \|_{L^\infty} \leq 1$. We use the rescaled action-angle variables introduced in Section \ref{sec:rescaled-action-angle} with
$H(x,y) = h^2 $, angle $\Phi (\theta, h) = X(\theta T(h), z(h))$ as usual and $z(h) = (h, 0)$. An explicit computation shows that the period of the ellipse $x^2 + \frac{y^2}{9}$ is constant and equal to $6 \pi$.
Now, for any $\nu \in (0, 1/8)$ we define $\delta = \nu^2$ and $t = \nu^3$ and we are ready to define the initial datum as 
$$\rho^{\initial} = \rho^{\initial}_+ - \rho^{\initial}_-$$
where we define the function using the rescaled action angle variables $\rho^{\initial}_+ (\theta, h)= c \mathbbm{1}_{[5 \delta, 6 \delta]} (h)  \mathbbm{1}_{[0, \sfrac{1}{20}]} (\theta)$, where $c$ is a constant independent on $\delta$ so that $\int \rho^{\initial}_+ = \delta^2 $. Then 
$$\rho^{\initial}_- (\theta, h) = -  20c  \mathbbm{1}_{[\overline{\theta}, \overline{\theta} + \sfrac{1}{20}]} (\theta) \int_0^1 \rho^{\initial}_+ (\Phi (\theta, h)) d \theta  = - c \mathbbm{1}_{[5 \delta, 6 \delta]} (h)  \mathbbm{1}_{[\overline{\theta}, \overline{\theta} + \sfrac{1}{20}]} (\theta) \,,$$ where $\overline{\theta}$ is such that $\Phi (\overline{\theta}, \delta) = (0, 3 \delta)$. It is not hard to see that $P_0 \rho^{\initial} =0$. We remark that $c>0$ can be computed explicitly, namely $c^{-1}= \frac{1}{20 \delta^2} \int_{5 \delta}^{6 \delta} 2 h T(h) dh = \frac{12 \pi}{20 \delta^2} 11 \delta^2 \simeq 6.6$. Therefore, we also have  $$ \| \rho_{\initial} \|_{L^2} \leq \delta.$$
Considering the support of $\rho_{-}^{\initial},\rho_{+}^{\initial}$ in Cartesian coordinates, it is not difficult to see that
$$\text{supp} (\rho_{-}^{\initial}) \subset [- \delta/4 , \delta /4 ] \times [15 \delta , 18 \delta]  \qquad \text{supp} (\rho_{+}^{\initial}) \subset [ 4 \delta  , 6 \delta ] \times [0 ,  \delta]. $$
 Now, we introduce the notations
\begin{itemize}
\item $\rho^{\heat}_+$ and $\rho^{\heat}_-$ are the solutions to the heat equation, namely \eqref{eq:advdiff} with $u\equiv0$, and initial data $\rho^{\initial}_+$ and $\rho^{\initial}_-$, respectively. We set $\rho^{\heat} = \rho^{\heat}_+ + \rho^{\heat}_- $.
\item $\rho_+$ and  $\rho_-$ are the solutions to \eqref{eq:advdiff} with initial data $\rho^{\initial}_+$ and $\rho^{\initial}_-$, respectively.
\end{itemize}
From energy estimates and using $u = \nabla^\perp H$, for any $\iota \in \{ +, - \}$  we have 
\begin{align*}
 \| \rho_\iota (t, \cdot)  - \rho_\iota^{\heat}  (t, \cdot ) \|_{L^2}^2 & \leq 2 \left |  \int_0^t \int_{\T^2} u \cdot \nabla \rho_\iota \rho_\iota^{\heat} \right | 
 \\
 & \leq 2 \|u \|_{L^\infty}  \left ( \int_0^t \int_{\T^2} |  \nabla \rho_\iota |^2  \right )^{1/2} \left ( \int_0^t \int_{\T^2} |   \rho_\iota^{\heat} |^2  \right )^{1/2}
 \\
 & \leq  \frac{2 \|u \|_{L^\infty} \| \rho^{\initial} \|_{L^2}^2 \sqrt{t} }{\sqrt{\nu}}  \leq 2 \nu \| \rho^{\initial} \|_{L^2}^2 \,.
\end{align*}
 Now we study the properties of the heat equation solutions $\rho_+^{\heat}$ and $\rho_-^{\heat}$. Firstly, we study $P_0 \rho_\iota^{\heat} (h)$ for $h \in [ 10 \delta  , 11 \delta]$. 
We consider the domains in rescaled action angle variables
$$A =  \{ h \in [10 \delta , 11 \delta] \} \,. $$
Using the previous property on the supports of $ \rho^{\initial}_-$, $\rho^{\initial}_+$ and 
 the identity $H(x,y) = h^2$ near the elliptic point (where $\varphi =1$), where $x= h$ and $y = 3h$, we can prove  that 
 $$ \dist (A, \text{supp} \rho^{\initial}_-) \geq 9 \delta. $$
Defining $A_{100} = A \cap \{  \theta \leq \sfrac{1}{100} \}$
$$\diam (A_{100} \cup \text{supp} \rho^{\initial}_+ ) \leq (11 \delta - 5 \delta) +  \delta \leq 7 \delta \,,$$
where we used that $A_{100} \subset \T \times [0, \delta]$ and
  $\diam B = \sup_{x,y \in B} |x-y|$ for  $ B \subset \T^2$.
Therefore, using the heat equation kernel, the relations between $t$, $\nu$ and $\delta$ we have 
$$ \rho^{\heat}_+ (x,y)  \geq \frac{1}{4 \pi \nu t} \exp \left  (- \frac{ (7 \delta )^2}{4 \nu t} \right ) \delta^2  \geq \frac{1}{4 \pi} \exp \left ( - \frac{7^2}{4} \right ) \qquad \forall (x,y) \in A_ {100}.$$
Similarly
$$ |\rho^{\heat}_- (x,y) | \leq  2  \frac{1}{4 \pi} \exp \left ( - \frac{9^2}{4} \right )  \qquad \forall (x,y) \in A \,.$$
Therefore, by definition of $P_0$
$$ P_0 \rho^{\heat} (h) = \int_0^1\rho^{\heat}  (\Phi (\theta, h)) d \theta  \geq \frac{1}{100} \frac{1}{4 \pi} \exp \left ( - \frac{7^2}{4} \right ) - \frac{1}{4 \pi} \exp \left ( - \frac{9^2}{4} \right ) \geq \exp (- 16)$$
for any $h \in [10\delta, 11 \delta]$.
Finally, using that $\| \rho_{\initial} \|_{L^2}^2 \leq \delta^2$ we have 
$$ \|  P_0 \rho^{\heat} (t, \cdot) \|_{L^2}^2 \geq  \int_{10 \delta}^{11 \delta} \exp (- 32) 2 h T(h) dh = 126 \pi \exp (- 32) \delta^2 \geq  \exp (- 32) \| \rho_{\initial}\|_{L^2}^2 \,, $$
concluding the proof.
\end{proof}
\section{Concluding remarks}
\label{sec:conclusion}

In this section we list  some future directions based on our approach. The aim is to provide  a better understanding of the enhanced dissipation effect for general autonomous Hamiltonian flows.

\subsection*{On unbounded period}
In the proof of the pseudospectral bounds for the model problem in Section \ref{sec:pseudo}, we crucially used that the period is bounded. This is useful to prove that 
the sets $\{ E_{k, \lambda, \delta} \}_{k \in \N}$ introduced  in \eqref{def:Eklambda} and \eqref{def:Eklambdahigh} are disjoint (and well separated). 
In particular, the fact that $\Omega(h) \simeq 1$ is needed to conclude that  
$$  \frac{| \lambda |}{|k|} \sim 1,$$
as used in \eqref{bd:lk1}  and in \eqref{eq:period-bdd-used}. If the period is unbounded, meaning that $\Omega \to 0$ (which happens if for instance $H(r)=r^{2+m}$ with $m \geq 1$),  we can modify our proof and define 
$$E_{k, \lambda, \delta} \coloneqq \left\{ (\theta, h) \in \T \times I : \left|\Omega(h) - \frac{\lambda}{k} \right| < \varphi(\delta, k) \right\},  $$
with a $\varphi$ to be determined. Similarly to \eqref{bd:lk0}, it can be shown that  
$$| \Omega (h_k) - \Omega (h_{k'})| \sim \frac{| \lambda | }{|k|} \frac{|k - k'|}{|k '|} - 2 \varphi (\delta, k) \,.$$
Cutting a $\delta$-neighbourhood near the elliptic point as in \eqref{def:Bdelta}, and assuming the behaviour $\Omega (h) \sim h^m$, we get 
$$\frac{| \lambda | }{|k|}  \gtrsim \delta^m. $$
This implies that we need $\varphi(\delta, k) \sim \delta^m/|k|$ to ensure that the sets are disjoint. Essentially, instead of taking $\delta^m$ for small $|k|$ or $\delta^{m-1}/|k|$ for larger $|k|$, it is enough to always use $\delta^m/|k|$. However, repeating our  proof with this definition, one obtains the pseudospectral bound  
$$ \norm{\cL_\lambda f} \gtrsim \nu^{\frac{m+1}{m+3}} \norm{f},$$
where $\lambda_\nu=\nu^{\frac{m+1}{m+3}}$ is the rate expected for a critical point of order $m+1$ instead of the order $m$ we are assuming. 
Indeed, for the radial flow $H(r)=r^{2+m}$ with $m \geq 1$, this is not the optimal bound, which would be given by  $\nu^{\frac{m}{m+2}}$ \cite{Gallay:2021aa}. We believe that this mismatch is due to the fact that we are treating all the $k$-s at once, and a more careful analysis is needed to handle unbounded periods in an optimal way.

For instance, one might try to do $k$-by-$k$ estimates by  studying the operator $P_k\Delta P_k$. However, the commutator estimates and the comparison with the original equation \eqref{eq:advdiff} will be way more challenging. 

\subsection*{On hyperbolic points}
It may be possible to get a pseudospectral bound of the model problem \eqref{eq:model} when the Hamiltonian is allowed to have hyperbolic points (a similar statement as Theorem \ref{th:main2}, but it is not clear what would be the optimal $\lambda_\nu$ with this method). To treat this case there are two technical problems to take into account:
\begin{itemize}
\item Understand the optimal bound of $\| D_x X \|_{L^\infty ((0, T(h)) \times \Omega )}$, which is then used to bound $ \| \partial_h \Phi \|_{L^\infty}$ in the Poincar\'e inequality Lemma \ref{lem:Poincare} (for estimates away from hyperbolic points one would loose $|\log(\nu)|$ factors). Moreover, one would need to adapt the rescaled action angle variables in Section \ref{sec:rescaled-action-angle}, as to optimise the parameters in the Poincaré inequalities. A preliminary computation shows that this is possible  for the cellular flow, but with a rate that is not consistent with the $\nu^{1/2}$ expected from the behavior around the elliptic point, see \cite{Brue:2022aa} as well.
\item The gradient of the Fourier multiplier operator introduced in \eqref{def:chi} and bounded in Lemma \ref{lemma:operator-estimates} degenerates, since  \eqref{eq:DetermiantEstimateFromBelow} is not true near the hyperbolic point. Therefore, the proof of Corollary \ref{cormain2} fails in this context, since it relies on property \ref{item:ClassACond4}, which is not true near the hyperbolic point. Hence, it is not clear how to relate our model problem \eqref{eq:model} to the advection diffusion equation solution  near the hyperbolic point (i.e. the solution to \eqref{eq:advdiff} is close in $L^2$ to the solution of the model problem \eqref{eq:model} in subdiffusive time scales).  
\end{itemize}
 
 \subsection*{Averaged in time commutator estimate}
It would be interesting  to understand if  the estimate \eqref{bd:gcorr} in Theorem \ref{th:main} holds with $\eps$ replaced by $ a(t, \nu)$ depending on time and $\nu >0$ such that $a(t, \nu) \leq a(s, \nu)$ for any $\nu >0$, $t \geq s$ and
$$ \lim_{\nu \to 0} \,  \lim_{t \to T_\nu } \, a (t, \nu) =0  \,,$$
where $T_\nu$ is such that  
$$ \lim_{\nu \to 0}  \, \nu T_\nu=0 \,.$$
This corresponds to proving that the solution of the advection-diffusion is converging to the average along the streamlines on subdiffusive times. We remark that our example in Proposition \ref{thm:example2} provides a non-trivial evolution for $\rho_0$ for short times $t \leq 1$, which indicates a dissipation of $\rho_\perp$. However, their dynamics is coupled and it is not clear how to control their interactions directly.

\subsection*{On initial data concentrated near elliptic points}
Consider an Hamiltonian having at least one non-degenerate elliptic point and containing also hyperbolic points.
We believe it would be interesting to find extra assumptions on the initial data of \eqref{eq:advdiff}, such as being supported close to a non-degenerate elliptic point, so that we can still effectively compare our model problem to the original equation \eqref{eq:advdiff}. The goal would be to apply our pseudospectral bound in Theorem \ref{th:main2} in a suitable bounded domain contaning the non-degenerate elliptic point, with the aim of proving a result in the spirit of Theorem \ref{th:main}.

\subsection*{Propagating smallness of the streamline average}
As shown in Proposition \ref{prop:conditional}, quantitative bounds on $\rho_0$ will automatically improve the bounds on $\rho$. We believe that, starting with $\rho_0^{in}=0$, it would be interesting to obtain uniform bounds of the type $\|\rho_0(t)\|_{L^2}\lesssim \nu^q$ for some $q>0$ and $0<t<\nu^{-p}$ with $1/\lambda_\nu<p\leq 1$. Depending on the specific values of $q$ and $p$, we can run the proof Proposition \ref{prop:conditional} and eventually obtain the $\|\rho(t)\|_{L^2}\lesssim \max\{\nu^{q/2},\nu^{1/(m+2)},\e^{-\delta_*\lambda_\nu t/2}\}$. However, the only quantitative information we have of this type, is that for Hamiltonians in the class \ref{HamiltonianClassPEps-m} $\rho_0$ remains of size $\eps$. To improve this last bound even for perturbations of radial flows, it seems that a much deeper understanding of more general properties is needed. For instance, one would like to understand the averaging properties of the stochastic flow associated to a random perturbation of a given Hamiltonian flow.

\subsection*{Acknowledgements}   The research of MD, MS, CJ was supported by the Swiss State Secretariat for Education, Research and lnnovation (SERI) under contract number MB22.00034 through the project TENSE. MD was acknowledge support by the GNAMPA-INdAM. The authors thank Maria Colombo for useful discussion about the problem.


\subsection*{Conflict of interest} The authors have no competing interests to declare that are relevant to the content of this
article.

\bibliographystyle{siam}
\bibliography{bibendiss}
\end{document}